\theoremstyle{plain}
\newtheorem{theo}{Theorem}[section]
\theoremstyle{theorem}
\newtheorem{lem}[theo]{Lemma}
\newtheorem{sublemma}[theo]{Sublemma}
\newtheorem{Prop}[theo]{\bf Proposition}
\newtheorem{Coro}[theo]{\bf Corollary}
\theoremstyle{definition}
\newtheorem{remark}[theo]{Remark}
\newtheorem{Exam}[theo]{\bf Example}
\newtheorem{Def}[theo]{Definition}
\newcommand{\rk}{\mathrm{rk}}
\newcommand{\I}{\mathbf{I}}
\newcommand{\PS}{\mathbb{P}}
\newcommand{\SO}{\mathcal{O}}
\newcommand{\Spec}{\mathrm{Spec}}
\newcommand{\Pic}{\mathrm{Pic}}
\newcommand{\U}{N}
\newcommand{\LC}{\mathcal{L}}
\newcommand{\EC}{E}
\newcommand{\tot}{\mathrm{tot}}
\newcommand{\OG}{OG_\pm}
\newcommand{\OGo}{OG_\pm^{E_1}}
\newcommand{\Ln}{L^\pm_n}
\newcommand{\ttLn}{\tilde{L}^\pm_n}
\newcommand{\tLn}{\tilde{L}^\mp_n}
\newcommand{\OGm}{OG_\mp^{E_1}}
\author{Thomas Hudson}
\address{Thomas Hudson, College of Transdisciplinary Studies, DGIST, Daegu, 42988,
	Republic of Korea}
\email{hudson@dgist.ac.kr }
\author{Arthur Martirosian}
\address{Arthur Martirosian, Heinrich-Heine-Universit\"{a}t D\"{u}sseldorf, Universit\"{a}tsstra\textnormal{\ss}e 1, 40225 Düsseldorf, Germany}
\email{arthur.martirosian@hhu.de }
\author{Heng Xie}
\address{Heng Xie, School of Mathematics, Sun Yat-sen University, Xingangxilu 135, Guangzhou, 510275, China}
\email{xieh59@mail.sysu.edu.cn}
\thanks{MSC classes: 11E81, 14M15, 19G99, and 14C20.}
\title{Witt groups of Spinor varieties}
\begin{document}
	\begin{abstract}
		We show that Witt groups of spinor varieties (aka.\ maximal isotropic Grassmannians) can be presented by combinatorial objects called ``even shifted young diagrams". Our method relies on the Blow-up setup of Balmer-Calm\`es, and we investigate the connecting homomorphism of the localization sequence via the projective bundle formula of Walter-Nenashev, the projection formula of Calm\`es-Hornbostel and the excess intersection formula of Fasel.  
	\end{abstract}
	\maketitle

	\section{Introduction}
	
	In the 1930s, Witt \cite{Witt} introduced a group structure on the set of isometry classes of quadratic forms over an arbitrary field, which is now known as the Witt group; for a nice survey article, see \cite{Balmer}.\ Witt groups give rise to a very interesting cohomology theory in algebraic geometry. Similarly to  the oriented cohomology theories  in the sense of Levine-Morel \cite{LM07} or Panin \cite{Panin} (e.g. K-theory and Chow groups), Witt theory also has a localization sequence, cf.\;\cite{Balmer}. However,  unlike what happens with these functors, in Witt theory pushforwards always keep track of the orientation and the relative codimension, a fact which makes computations via the localization sequence much trickier, cf.\;\cite{CH11}. To this day, not many computations of Witt groups of elementary projective schemes have been performed:  quadrics (cf.\;\cite{Ne} and \cite{Xie20}), projective bundles (cf.\;\cite{Ne} and \cite{Walter}, see also \cite{KSW20} and \cite{Rohrbach}), Grassmann varieties (cf.\;\cite{BC12}), curves and surfaces (cf.\;\cite{Zib14}), cellular varieties over  algebraically closed fields (cf.\;\cite{Zib11}), and real varieties (cf.\;\cite{KW16} and \cite{KW20}).
	
	In \cite{BC12}, Balmer--Calm\`es adopt an innovative approach known as the ``Blow-up setup" from \cite{BC}. This approach is effective, because it interprets the abstract connecting homomorphism arising from  the localization sequence of Witt groups in purely geometric terms. This key idea  motivates the current article and, in its vein, we study Witt groups of even  maximal isotropic Grassmannians $OG_+(n,E)$, \textit{maximal isotropic Grassmannians} for short. Provided that the ambient space $E$ has even dimension and it comes equipped with a non-degenerate symmetric bilinear form, these spaces are defined as subschemes of the usual Grassmannians $Gr(n,E)$: maximal isotropic Grassmannians parametrise those subspaces on which the symmetric form vanishes identically. 
	
	Before stating the main result (cf. Theorem \ref{thm:main-theorem}), we recall that in  \cite{BC12} Balmer--Calm\`es identified the additive generators of the Witt groups of Grassmannians and introduced a combinatorial object known as \textit{even Young diagrams}, which they used as an indexing set. Roughly speaking they consider a subfamily consisting of those Young diagrams whose inner edges (\textit{i.e.}\;those which do not lie on the outer rectangular frame) have even length. 
	
	In similar fashion, for maximal isotropic Grassmannians the additive generators can be described by means of \textit{even shifted Young diagrams}, where shifted Young diagrams are the combinatorial object used to index the generators of the Chow ring of $OG_+(n,E)$. In this case the outer frame consists of an upside-down staircase corresponding to the maximal shifted partition $\mu=(n-1, n-2,\cdots, 1)$ right-justified. As before, the even diagrams are those whose inner segments have even length and we denote by $\mathfrak{E}_{n-1}$ the set of even shifted Young diagram which are contained inside $\mu$. Example \ref{exam:og(7)} provides the full list of the even shifted Young diagrams contained in $\mu$ for $n=7$, these are exactly those whose internal segments have even length. 
	
	%$$
	%\begin{tikzpicture}[scale=0.5]
	%\draw (-2,2) -- (5,2);
	%\draw (-2,1) -- (5,1);
	%\draw (-1,0) -- (5,0);
	%\draw (0,-1) -- (5,-1);
	%\draw (1,-2) -- (5,-2);
	%\draw (2,-3)-- (5,-3);
	%\draw (3,-4) -- (5,-4);
	%\draw (4,-5) -- (5,-5);
	%\draw (-2,2) -- (-2,1);
	%\draw (-1,2) -- (-1,0);
	%\draw (0,2) -- (0,-1);
	%\draw (1,2) -- (1,-2);
	%\draw (2,2) -- (2,-3);
	%\draw (3,2) -- (3,-4);
	%\draw (4,2) -- (4,-5);
	%\draw (5,2) -- (5,-5);   %%frame
	
	%\draw[fill, opacity=0.2] (-2,2) -- (5,2) -- (5,1)-- (3,1) -- (3,-1) -- (0,-1) -- (0,0) -- (-1,0) -- (-1,1) -- (-2,1) -- (-2,2);  %%%fill % -- (2,-2) -- (1,-2) -- (1,-1) --(0,-1)
	
	%\draw (9,0) node[right]{$\underline{\lambda}=(7,4,3)$};% \in \Delta_7$};
%\draw (12,-1.5) node[right] {$\underline{d}=(1,3,4)$};
%\draw (12,-3) node[right] {$\underline{e}=(0,2,3)$};
%\draw (-3,1.5) node{$\lambda_1 \rightarrow$};
%\draw (-2,0.5) node{$\lambda_2 \rightarrow$};
%\draw (-1,-0.5) node{$\lambda_3 \rightarrow$};
%<,2);
%\draw (8.75,2) -- (9,2);
%\draw (6.75,1) -- (7,1);
%\draw (7.75,-1) -- (8,-1);
%\draw (8.75,-2) -- (9,-2);  %%horizontal d-lines

%\draw (3,-7) -- (5,-7);
%\draw (2,-8) -- (5,-8);   %%horizintal e-lines

%\draw (3,-6.75) -- (3,-7);
%\draw(5,-6.75) -- (5,-7);
%\draw (2,-7.75) -- (2,-8);
%\draw (5,-7.75) -- (5,-8);
%\draw (5,-5.75) -- (5,-6);
%\draw (5,-6) node[below] {$e_1$};
%\draw (4.5,-7) node[below] {$e_2$};
%\draw (4,-8) node[below] {$e_3$};

%\draw[very thick] (5,1) -- (3,1) -- (3,-1) -- (1,-1) ;%-- (2,-2);
%\end{tikzpicture}
%$$

Let $S$ be a scheme with $\frac{1}{2} \in \SO_S$. Let $X$ be a scheme over $S$. Define the total Witt ring as
\[W^\tot(X) : = \bigoplus_{i \in \mathbb{Z}/4\mathbb{Z}} \bigoplus_{[L] \in \Pic(S)/2} W^i(X, p^*L),\]
where $p:X \to S$ is the structure morphism. Note that this convention differs from \cite{BC12}, as we do not adopt the whole grading involving $\Pic(X)/2$. This simplification is enough for us, because in our case the twisted Witt groups are trivial (cf. Proposition \ref{thm:L2d-1}).\ We can now state the main theorem.

\begin{theo}\label{thm:main-theorem}
	Let $S$ be a regular noetherian scheme with $\frac{1}{2} \in \SO_S$.\ Let $OG_+(n,E)$ be the maximal  isotropic Grassmannian of the trivial $2n$-dimensional bundle $E=\SO_S^{2n}$ with the split hyperbolic form (In fact, we deal with a more general setting of complete flags throughout the paper).\ There is an isomorphism of graded $W^\tot(S)$-modules
	$$W^{\tot}(OG_+(n,E)) \cong \bigoplus_{\underline{\lambda} \in \mathfrak{E}_{n-1}} W^{\tot}(S)[-\lvert \underline{\lambda}\rvert]\,. $$
	Here $\lvert \underline{\lambda} \rvert$ denotes the number of boxes of the even shifted Young diagram $\underline{\lambda}$ or, equivalently, the sum of all entries in the strict partition associated to $\underline{\lambda}$.	
\end{theo}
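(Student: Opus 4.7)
The plan is to argue by induction on $n$, using the Blow-up setup of Balmer--Calmès to reduce the computation to a smaller spinor variety together with a projective-bundle type contribution. The base case $n=1$ is immediate, since $OG_+(1,\SO_S^2)\cong S$ and $\mathfrak{E}_0=\{\emptyset\}$.

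For the induction step, I would single out the isotropic line $E_1$ from the fixed complete flag and let $Z\subset OG_+(n,E)$ be the closed subscheme parametrising those maximal isotropic subspaces $V$ with $E_1\subset V$. The assignment $V\mapsto V/E_1$ identifies $Z$ with a maximal isotropic Grassmannian $OG_+(n-1, E_1^\perp/E_1)$ of a $(2n-2)$-dimensional orthogonal bundle, whose total Witt group is given by induction. The open complement $U=OG_+(n,E)\setminus Z$ parametrises those $V$ with $V\cap E_1=0$; sending such a $V$ to the image of $V\cap E_1^\perp$ in $E_1^\perp/E_1$ presents $U$ as an affine bundle of rank $n-1$ over the appropriate component of this smaller orthogonal Grassmannian, so $W^\tot(U)$ is also computable by induction via homotopy invariance.

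The heart of the argument is then the analysis of the localization sequence
\[\cdots\to W^{i}_Z(OG_+(n,E))\to W^i(OG_+(n,E))\to W^i(U)\to W^{i+1}_Z(OG_+(n,E))\to\cdots\]
via the Blow-up setup. I would replace $W^*_Z$ by the Witt groups with supports of the blow-up of $OG_+(n,E)$ along $Z$, and compute these using the projective bundle formula of Walter--Nenashev, which yields two shifted copies of $W^\tot(Z)$ per summand. The projection formula of Calmès--Hornbostel governs the compatibility with restriction to $U$, while the excess intersection formula of Fasel produces the explicit degree shift and twist appearing on the connecting map. The desired outcome is that the connecting homomorphism acts as a shifted identity on exactly one of the two families of copies and vanishes on the other, so that the long exact sequence splits into short exact ones.

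The principal obstacle is the combinatorial bookkeeping. One must identify the surviving generators with precisely the set $\mathfrak{E}_{n-1}$: the even shifted Young diagrams contained in $\mu$ split naturally into those containing a full top row (which correspond to summands coming from $Z$ via the blow-up) and those that do not (which correspond to summands coming from $U$), and the parity condition on the lengths of inner segments is exactly what controls whether a candidate generator is killed by the connecting map or lifts to a generator of $W^\tot(OG_+(n,E))$. The vanishing of twisted Witt groups established in Proposition \ref{thm:L2d-1} is used throughout to trivialize the $\Pic$-grading, after which matching the geometric degree shift produced by Walter--Nenashev and Fasel with $|\underline{\lambda}|$ yields the claimed isomorphism of graded $W^\tot(S)$-modules.
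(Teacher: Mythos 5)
Your setup (induction on $n$, the closed subscheme $Z=OG_+^{E_1}(n,E)\cong OG_+(n-1,E^1)$, the affine-bundle structure of the open complement over the \emph{opposite} component $OG_-^{E_1}(n,E)$, and the Blow-up setup for the connecting map) matches the paper, but the heart of your argument is missing and, as stated, would fail. You do not separate the parities of $n$. For $n$ even the localization sequence at the $E_1$-level splits outright by \cite[Theorem 1.4 (A)]{BC} (Proposition \ref{thm:evendim-eventwist}); no analysis of $\partial$ is needed there. For $n$ odd, your ``desired outcome'' --- that the connecting homomorphism restricts to a shifted identity on one family and zero on the other, so that the sequence at the $E_1$-level breaks into short exact pieces --- is precisely what is \emph{not} available from the Walter--Nenashev, Calm\`es--Hornbostel and Fasel formulas applied only to the embedding of $OG_+^{E_1}(n,E)$: the connecting map, computed via \cite[Theorem 1.4 (B)]{BC} as $(\tilde{\pi}_+)_*\circ\tilde{\pi}_-^*$ up to isomorphisms, is genuinely nonzero and the sequence does not split at that level. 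Note also that if it did split at every stage you would get $2^{n-1}$ copies of $W^\tot(S)$, the K-theory/Chow count, contradicting $\lvert\mathfrak{E}_{n-1}\rvert$; and your combinatorial dichotomy ``full top row vs.\ not'' reflects an $n\to n-1$ recursion, whereas for odd $n$ the diagrams split into those with the first \emph{two} rows full and those with two empty right columns, mirroring the shift $2n-3$ in Theorem \ref{thm:odd dimension case}.

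The missing idea is the descent to the codimension-two flag subscheme: for odd $n$ one must further decompose both $W^i(OG_-^{E_1}(n,E))\cong W^i(U_+(n,E))$ and $W^{i-(n-2)}(OG_+^{E_1}(n,E))$ along $OG_\pm^{E_2}(n,E)$ (Lemmas \ref{lem:E2split} and \ref{lem:OGE2splits}), and then prove that the induced $2\times 2$ block matrix of $(\tilde{\pi}_+)_*\circ\tilde{\pi}_-^*$ has one block an isomorphism and the others zero. This is the technical core (Sublemmas \ref{sublem:composition-iso}--\ref{sublem:composition-zero-2}) and it uses tor-independent base change, the excess intersection formula, Lemma \ref{lem:pi1inverse}, and crucially the parity-sensitive projective bundle theorems \ref{thm:projectivebundleevendimoddtwist} and \ref{thm:projectivebundleodddimeventwist} (relative dimensions $n-2$ odd versus $n-3$ even). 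The output is not a splitting of the original sequence but a new short exact sequence
\[
0 \to W^{i-(2n-3)}(OG_+^{E_2}(n,E),\omega_{\theta_+}) \to W^i(OG_+(n,E)) \to W^i(OG_+^{E_2}(n,E)) \to 0,
\]
which splits because the right-hand module is projective by induction, giving the $n\to n-2$ recursion with shift $2n-3$; combined with the even case this produces exactly the indexing by $\mathfrak{E}_{n-1}$. Without this two-step reduction your plan has no mechanism to decide which generators are killed by $\partial$, so the proof as proposed does not go through.
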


\begin{Exam}\label{exam:og(7)}
	Here are the even shifted Young diagrams for $\mathrm{OG}_+(7,E)$.
	
	$$
	\begin{tikzpicture}[scale=0.4]
		\draw (1,0) -- (7,0) -- (7,-6) -- (6,-6) -- (6,-5) -- (5,-5) -- (5,-4) -- (4,-4) -- (4,-3) -- (3,-3) -- (3,-2) -- (2,-2) -- (2,-1) -- (1,-1) -- (1,0);
		\draw (1,-1) -- (7,-1);\draw (2,-2) -- (7,-2);\draw (3,-3) -- (7,-3);
		\draw (4,-4) -- (7,-4);\draw (5,-5) -- (7,-5);
		\draw (2,0) -- (2,-2);\draw (3,0) -- (3,-3);\draw (4,0) -- (4,-4);
		\draw (5,0) -- (5,-5);\draw (6,0) -- (6,-6);
		\draw[fill,opacity=0.2] (1,0) -- (7,0) -- (7,-6) -- (6,-6) -- (6,-5) -- (5,-5) -- (5,-4) -- (4,-4) -- (4,-3) -- (3,-3) -- (3,-2) -- (2,-2) -- (2,-1) -- (1,-1) -- (1,0);
		%\draw (4,-7) node{$\rho \colon \{0,1,2\} \mapsto -21$};
	\end{tikzpicture}
	\qquad
	\begin{tikzpicture}[scale=0.4]
		\draw (1,0) -- (7,0) -- (7,-6) -- (6,-6) -- (6,-5) -- (5,-5) -- (5,-4) -- (4,-4) -- (4,-3) -- (3,-3) -- (3,-2) -- (2,-2) -- (2,-1) -- (1,-1) -- (1,0);
		\draw (7,-4) -- (5,-4);
		\draw (1,-1) -- (7,-1);\draw (2,-2) -- (7,-2);\draw (3,-3) -- (7,-3);
		\draw (4,-4) -- (7,-4);
		\draw (2,0) -- (2,-2);\draw (3,0) -- (3,-3);\draw (4,0) -- (4,-4);
		\draw (5,0) -- (5,-4);\draw (6,0) -- (6,-4);
		\draw[fill,opacity=0.2] (1,0) -- (7,0) -- (7,-4) -- (5,-4) -- (4,-4) -- (4,-3) -- (3,-3) -- (3,-2) -- (2,-2) -- (2,-1) -- (1,-1) -- (1,0);
		%\draw (4,-7) node{$ \rho \colon\{0,1\}  \mapsto -18$};
		\draw[very thick] (7,-4) -- (5,-4);
	\end{tikzpicture}
	\qquad
	\begin{tikzpicture}[scale=0.4]
		\draw (1,0) -- (7,0) -- (7,-6) -- (6,-6) -- (6,-5) -- (5,-5) -- (5,-4) -- (4,-4) -- (4,-3) -- (3,-3) -- (3,-2) -- (2,-2) -- (2,-1) -- (1,-1) -- (1,0);
		\draw (7,-2) -- (5,-2) -- (5,-4);
		\draw (1,-1) -- (7,-1);\draw (2,-2) -- (7,-2);\draw (3,-3) -- (5,-3);
		\draw (4,-4) -- (5,-4);
		\draw (2,0) -- (2,-2);\draw (3,0) -- (3,-3);\draw (4,0) -- (4,-4);
		\draw (5,0) -- (5,-2);\draw (6,0) -- (6,-2);
		\draw[fill,opacity=0.2] (1,0) -- (7,0) -- (7,-6) -- (7,-2) -- (5,-2) --(5,-4) -- (4,-4) -- (4,-3) -- (3,-3) -- (3,-2) -- (2,-2) -- (2,-1) -- (1,-1) -- (1,0);
		%\draw (4,-7) node{$\rho \colon  \{0,2\}\mapsto -14$};
		\draw[very thick]  (5,-4)--(5,-2) --(7,-2);
	\end{tikzpicture}
	\qquad
	\begin{tikzpicture}[scale=0.4]
		\draw (1,0) -- (7,0) -- (7,-6) -- (6,-6) -- (6,-5) -- (5,-5) -- (5,-4) -- (4,-4) -- (4,-3) -- (3,-3) -- (3,-2) -- (2,-2) -- (2,-1) -- (1,-1) -- (1,0);
		\draw (7,-2) -- (3,-2);
		\draw (1,-1) -- (7,-1);
		\draw (2,0) -- (2,-2);\draw (3,0) -- (3,-2);\draw (4,0) -- (4,-2);
		\draw (5,0) -- (5,-2);\draw (6,0) -- (6,-2);
		\draw[fill,opacity=0.2] (1,0) -- (7,0) -- (7,-2) -- (3,-2) -- (2,-2) -- (2,-1) -- (1,-1) -- (1,0);
		%\draw (4,-7) node{$\rho \colon\{0\} \mapsto -11$};
		\draw[very thick]  (3,-2)--(7,-2);
	\end{tikzpicture}
	$$
	$$
	\begin{tikzpicture}[scale=0.4]
		\draw (1,0) -- (7,0) -- (7,-6) -- (6,-6) -- (6,-5) -- (5,-5) -- (5,-4) -- (4,-4) -- (4,-3) -- (3,-3) -- (3,-2) -- (2,-2) -- (2,-1) -- (1,-1) -- (1,0);
		\draw (5,0) -- (5,-4);
		\draw (1,-1) -- (5,-1);\draw (2,-2) -- (5,-2);\draw (3,-3) -- (5,-3);
		\draw (4,-4) -- (5,-4);
		\draw (2,0) -- (2,-2);\draw (3,0) -- (3,-3);\draw (4,0) -- (4,-4);
		\draw[fill,opacity=0.2] (1,0) -- (5,0) -- (5,-4) -- (4,-4) -- (4,-3) -- (3,-3)-- (3,-2)  -- (2,-2) -- (2,-1) -- (1,-1) -- (1,0);
		\draw[very thick]  (5,-4)--(5,0);
		%\draw (4,-7) node{$\rho \colon\{1,2\} \mapsto-10$};
	\end{tikzpicture}
	\qquad
	\begin{tikzpicture}[scale=0.4]
		\draw (1,0) -- (7,0) -- (7,-6) -- (6,-6) -- (6,-5) -- (5,-5) -- (5,-4) -- (4,-4) -- (4,-3) -- (3,-3) -- (3,-2) -- (2,-2) -- (2,-1) -- (1,-1) -- (1,0);
		\draw (5,0) -- (5,-2) -- (3,-2);
		\draw (1,-1) -- (5,-1);
		\draw (2,0) -- (2,-2);\draw (3,0) -- (3,-2);\draw (4,0) -- (4,-2);
		\draw[very thick]  (3,-2)--(5,-2)--(5,0);
		\draw[fill,opacity=0.2] (1,0) -- (5,0) -- (5,-2) -- (3,-2)  -- (2,-2) -- (2,-1) -- (1,-1) -- (1,0);
		%\draw (4,-7) node{$\rho \colon\{1\} \mapsto -7$};
	\end{tikzpicture}
	\qquad
	\begin{tikzpicture}[scale=0.4]
		\draw (1,0) -- (7,0) -- (7,-6) -- (6,-6) -- (6,-5) -- (5,-5) -- (5,-4) -- (4,-4) -- (4,-3) -- (3,-3) -- (3,-2) -- (2,-2) -- (2,-1) -- (1,-1) -- (1,0);
		\draw (3,0) -- (3,-2);
		\draw (1,-1) -- (3,-1);
		\draw (2,0) -- (2,-2);
		\draw[very thick]  (3,-2)--(3,0);
		\draw[fill,opacity=0.2] (1,0) -- (3,0) -- (3,-2) -- (2,-2) -- (2,-1) -- (1,-1) -- (1,0);
		%\draw (4,-7) node{$\rho \colon\{2\} \mapsto -3$};
	\end{tikzpicture}
	\qquad
	\begin{tikzpicture}[scale=0.4]
		\draw (1,0) -- (7,0) -- (7,-6) -- (6,-6) -- (6,-5) -- (5,-5) -- (5,-4) -- (4,-4) -- (4,-3) -- (3,-3) -- (3,-2) -- (2,-2) -- (2,-1) -- (1,-1) -- (1,0);
		%\draw (4,-7) node{$\rho \colon\emptyset \mapsto 0$};
	\end{tikzpicture}
	$$
\end{Exam}

It is worth pointing out that, although we work within the framework of the Blow-up setup of \cite{BC}, our method differs from that of \cite{BC12} for Type A. There the authors make use of desingularization of Schubert varieties as an intermediate step to compute the connecting homomorphism $\partial$, instead, in view of the observation that the exceptional fiber of the Blow up is a projective bundle, we are able to handle $\partial$ in Type D using the projective bundle formula and the excess intersection formula for regular schemes.\ This idea does not seem to have appeared in the literature so far.\ Since the connecting homomorphisms are not always trivial, the localization sequence can not split in general. Notice, however, that in a specific case (cf.\ Theorem \ref{thm:odd dimension case})  the non-split localization sequence can be transformed into short split exact sequences, provided that one extracts the data from the codimension two subbundle of the ambient bundle. This explains why even Young diagrams come into the picture. It would be interesting to know if this method can be used to compute the Witt groups of other homogeneous varieties. Unfortunately, we were informed by Nicolas Perrin that Type C and E do not fit in the Blow-up Setup, and therefore it can not be applied directly to these cases without modifications.

One may also try to adopt our method to study the $\I$-cohomology of Type D homogeneous varieties. For $\I$-cohomology of projective bundles and split quadrics using the Blow-up setup, we refer the reader to \cite{Fasel13} and \cite{HXZ20}. There are also recent developments on the Hermitian $K$-theory when 2 is not invertible in the base, cf. \cite{Calmeselt} and \cite{Sch21}. Investigating the Hermitian $K$-theory of schemes in these frameworks seems to be an interesting project.\\

\noindent \textbf{Convention}. All our schemes, if not mentioned otherwise, are assumed to be \textit{regular noetherian} with $\frac{1}{2}$ in their global sections. We refer to \cite{Balmer}, \cite{BC}, \cite{Kne77} and \cite{QSS} for basic terminology.

\section{Geometry of spinor varieties}

\subsection{Flags} 
Let $(\EC,\beta)$ be a bilinear space (aka.\ a non-degenerate symmetric bilinear bundle) of rank $2n$ over a scheme $S$ with $\frac{1}{2} \in \SO_S$. Throughout the article we assume that $S$ is connected, and the general case of our main theorem follows easily from this one.\ 

\begin{Def}
	The bilinear space $\EC = (\EC,\beta)$ is said to \textit{admit a complete flag} if there is a filtration 
	\[ \EC_\bullet : = \Big( 0 = \EC_0 \subset \EC_1 \subset \EC_2 \subset \cdots \subset \EC_{n-1} \subset \EC_n = \EC_n^\perp \subset \EC_{n-1}^\perp  \subset \cdots \subset \EC_1^\perp \subset \EC_0^\perp = \EC \Big) \] 
with the rank $\rk(\EC_i) = i$ and all the inclusions are admissible (\textit{i.e.} their quotients are locally free). 
\end{Def}  
In particular, $(\EC,\beta)$ is metabolic, and $E_n$ is a \textit{Lagrangian}, \textit{i.e.} \ a maximal totally isotropic  subbundle.\ The symmetric bundle $(\EC,\beta) $ induces a unique symmetric bundle $(\EC^1,\beta ^1)$ with $\EC^1: = \EC_1^\perp/\EC_1$, see \cite[Theorem 1.1.32]{Balmer}. Note that the complete flag $\EC_\bullet$ induces a complete flag 
\[ \EC^1_\bullet : = \Big( 0 = \EC^1_{0}  \subset \EC^1_{1} \subset \cdots \subset \EC^1_{n-2} \subset \EC^1_{n-1} = (\EC^1_{n-1})^\perp \subset (\EC^1_{n-2})^\perp  \subset \cdots \subset (\EC^1_{0})^\perp =: (\EC^1)^\perp  \Big) \] 
on  $(\EC^1,\beta ^1)$, where we define $\EC^1_{i}: = \EC_{i+1}/\EC_1$. Notice that $\EC_{i+1}^\perp / \EC_1$ is isomorphic to $ (\EC_{i+1}/\EC_1)^{\perp}$ in $(\EC^1,\beta ^1)$, cf.\ \cite[Proposition 6.5]{QSS}.\ This procedure can be repeated and inductively one obtains complete flags $\EC^j_\bullet$ on  $(\EC^j,\beta ^j)$, where $\EC^j : = ((\EC^{j-1}_1)^\perp/\EC^{j-1}_1,\beta ^{j-1})$ and $\EC^0_1 = \EC_1$.\ Each $\EC^j$ is a metabolic space with a Lagrangian $E^j_{n-j}$. 

\begin{Prop}\label{Prop: rank two} Let $E$ be a metabolic space of rank $2$. Then, $E$ has exactly two Lagrangians $N$ and $N'$. Moreover, $N' \cong N^\vee$ and $E \cong N \oplus N^\vee$. 
\end{Prop}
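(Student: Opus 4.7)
The plan is to parametrise Lagrangians of $E$ as $S$-sections of the zero scheme $Q := V(q) \subset \mathbb{P}(E)$ of the quadratic form $q(v) := \beta(v,v)$, and to exploit that $Q \to S$ is a finite étale double cover carrying a tautological section coming from $N$. Working locally, I would pick a generator $e$ of $N$ and complete it to a basis $(e,f)$ of $E$. Non-degeneracy of $\beta$ together with $N = N^\perp$ forces $\beta(e,f) \in \mathcal{O}_S^\times$, and after rescaling $f$ one may assume $\beta(e,f) = 1$. Using $\tfrac12 \in \mathcal{O}_S$, the substitution $f \mapsto f - \tfrac12 \beta(f,f)\,e$ also achieves $\beta(f,f) = 0$; in these coordinates $\beta$ is the standard hyperbolic form and $q = 2xy$. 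Hence $Q \subset \mathbb{P}^1_S$ is locally a disjoint union of two sections, and globally $Q \to S$ is finite étale of constant degree $2$.

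Since $N$ is isotropic, the closed embedding $\mathbb{P}(N) \hookrightarrow \mathbb{P}(E)$ factors through $Q$ and yields a section $\sigma \colon S \to Q$. A section of a separated étale morphism is an open-and-closed immersion, so $Q = \sigma(S) \sqcup Q'$ with $Q' \to S$ finite étale of degree $1$, hence an isomorphism. The composite $S \xrightarrow{\sim} Q' \hookrightarrow \mathbb{P}(E)$ is a second section of $\mathbb{P}(E) \to S$ and corresponds to a rank-one isotropic subbundle $N' \subset E$. Conversely, any Lagrangian produces a section of $Q \to S$; since $S$ is connected and $Q$ has exactly two connected components, such a section is either $\sigma$ or the one factoring through $Q'$. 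Thus $N$ and $N'$ are the only Lagrangians.

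It remains to deduce the module and form decompositions. Being distinct rank-one subbundles of the rank-two bundle $E$, $N$ and $N'$ satisfy $N \cap N' = 0$, so the natural map $N \oplus N' \to E$ is an isomorphism of $\mathcal{O}_S$-modules by rank count. The restriction $\beta \colon N \otimes N' \to \mathcal{O}_S$ must be non-degenerate---otherwise it would vanish and $E = N + N'$ would be totally isotropic, contradicting non-degeneracy of $\beta$---and this pairing provides a canonical isomorphism $N' \cong N^\vee$. Under the resulting identification $E \cong N \oplus N^\vee$, the fact that both $N$ and $N'$ are isotropic shows that $\beta$ is precisely the split hyperbolic form.

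The only substantive step is the étaleness of $Q \to S$, which relies crucially on $\tfrac12 \in \mathcal{O}_S$: in characteristic $2$ the expression $q = 2xy$ vanishes identically and $Q$ ceases to be a finite étale double cover, so the entire strategy collapses. Granted étaleness, the remainder is a direct application of the standard structure theorem for étale degree-$2$ covers with a section.
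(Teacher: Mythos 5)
Your argument is correct, but it takes a genuinely different route from the paper. The paper works with an affine open cover on which $E$ is free, invokes Knebusch's classification to get exactly two Lagrangians over each affine piece, glues the ``other'' local Lagrangians (they agree on overlaps because they differ from $N$ there), and finally reads off $N'\cong N^\vee$ from the structural exact sequence $0\to N\to E\to N^\vee\to 0$. You instead globalize through the conic $Q=V(q)\subset\mathbb{P}(E)$: your local hyperbolic-coordinate computation (which is essentially the same local input the paper outsources to \cite{Kne77}) shows $Q\to S$ is finite \'etale of degree $2$, the section coming from $N$ splits off an open-and-closed copy of $S$, and the complementary degree-one piece produces $N'$; connectedness of $S$ (which the paper assumes throughout) then gives ``exactly two'' at once. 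This buys a self-contained and conceptually clean proof in which the gluing and uniqueness are automatic consequences of the structure of split \'etale double covers, at the cost of a small amount of machinery (sections of separated \'etale maps are open and closed immersions), whereas the paper's proof is more elementary sheaf-gluing but relies on the cited affine case. One small repair: the step ``distinct rank-one subbundles satisfy $N\cap N'=0$, so $N\oplus N'\to E$ is an isomorphism by rank count'' is not valid as stated, since an injective map of bundles of equal rank need not be surjective (e.g.\ $N=\langle e\rangle$, $N''=\langle e+tf\rangle$ over $\mathbb{A}^1$); what you actually need, and what your construction provides, is that the two sections lie in disjoint components of $Q$, so $N(s)\neq N'(s)$ in every fibre, making $N\oplus N'\to E$ fibrewise bijective and hence an isomorphism; the same fibrewise reading also makes your non-degeneracy argument for $\beta\colon N\otimes N'\to\SO_S$ (and thus $N'\cong N^\vee$) airtight.
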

\begin{proof}
This is well-known if the base is smooth over a field, cf.\ \cite[page 77]{FP}. We include details, as we could not find a reference in the generality that we need.\ Assume that $N$ is a Lagrangian of $E$. Then, we have the structural exact sequence
$$ \xymatrix{ 0 \ar[r] & N \ar[r] & E \ar[r] & N^\vee \ar[r] & 0}. $$
Let $\{ U_i \}$ be an affine open cover of the base scheme so that $E|_{U_i}$ is free. Since $ \frac{1}{2} \in \SO_{U_i}$, recall that the metabolic space $E|_{U_i}$ of rank $2$ has exactly two Lagrangians  over each affine space  \cite{Kne77}. The first one is precisely $N|_{U_i}$ and we will denote the second one by $N'_i$. Note that $N'_i |_{U_i \cap U_j} = N'_j |_{U_i \cap U_j}$, since both of them are different from $N|_{U_i \cap U_j}$. This shows that the different $N'_i$'s glue together, giving rise to a new Lagrangian $N'$ of $E$. Next, take any Lagrangian $N''$ of $E$. If $N''|_{U_i} = N|_{U_i}$ for some $i$, then $N'' = N$. If not, then there is an affine open subscheme $V$ such that $N''|_{V} = N'|_{V}$. It follows that $N''|_{U_i \cap V} = N|_{U_i \cap V}  = N'|_{U_i \cap V} $ which is a contradiction. This shows that $N''$ is either $N$ or $N'$.\ Finally, the Lagrangian $N'$ gives rise to the following diagram
$$ \xymatrix{ 0 \ar[r] & N \ar[r] & E \ar[r] & N^\vee \ar[r] & 0 \\
                                  &           &  N' \ar[u] \ar[ur]  }. $$
Note that the composite $N' \to E \to N^\vee$ is locally an isomorphism, and hence an isomorphism. 
\end{proof}
\begin{remark} 
Although metabolic spaces of rank 2 are split, there exist examples of metabolic spaces of rank $4$ which do not split, cf. \cite{Knus-Ojanguren}. 
\end{remark}

\begin{lem}\label{lem:lift-sublag}
Assume that $V$ is totally isotropic. Take $W \subset V^\perp/V$ to be a Lagrangian. Denote by $L$ the pullback along the inclusion $W \hookrightarrow V^\perp/V$ and the canonical quotient $V^\perp \to V^\perp/V $. Then $L$ is a Lagrangian of $E$ such that $V\subset L$ and $L/V = W$.                      
$$\xymatrix{ W \ar@{>->}[r] & V^\perp/V \\
                     L \ar@{>->}[r]  \ar[u]& V^\perp \ar[u] \ar@{>->}[r] & E}$$
\end{lem}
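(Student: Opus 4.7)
The plan is to unwind the pullback description of $L$, verify it is an admissible subbundle of the correct rank, and then show it is totally isotropic of maximal rank. By the Cartesian square defining it, one has $L = \pi^{-1}(W)$, where $\pi \colon V^\perp \twoheadrightarrow V^\perp/V$ is the canonical quotient. Since $\ker \pi = V$, the containment $V \subset L$ and the induced isomorphism $L/V \cong W$ are immediate, which handles the last two assertions of the conclusion.

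For admissibility, since $W$ is a Lagrangian of the induced non-degenerate bilinear space $(V^\perp/V,\beta')$ (cf.\ \cite[Theorem 1.1.32]{Balmer}), it is in particular an admissible subbundle of $V^\perp/V$. The Cartesian square then identifies the cokernel $V^\perp/L$ with $(V^\perp/V)/W$ via the third isomorphism theorem, and the latter is locally free; composing with the admissible inclusion $V^\perp \hookrightarrow E$ shows that $L \hookrightarrow E$ is admissible. A rank count gives $\rk L = \rk V + \rk W = \rk V + (n - \rk V) = n$.

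For total isotropy, the induced form $\beta'$ on $V^\perp/V$ satisfies $\beta'(\pi(x),\pi(y)) = \beta(x,y)$ for all $x,y \in V^\perp$. If $x,y \in L$, then $\pi(x),\pi(y) \in W$, and the isotropy of $W$ forces $\beta(x,y) = 0$; hence $L \subset L^\perp$. Admissibility of $L$ gives $\rk L^\perp = 2n - n = n$, so the inclusion $L \hookrightarrow L^\perp$ is between admissible subbundles of $E$ of equal rank $n$. A stalkwise comparison together with Nakayama's lemma applied to the finitely generated module $L^\perp/L$ (whose fibers all vanish) forces $L = L^\perp$, i.e.\ $L$ is a Lagrangian.

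The main obstacle is the bookkeeping in transferring admissibility and bilinear data between $E$, $V^\perp$, and the quotient $V^\perp/V$. Once the compatibility $\beta' \circ (\pi \times \pi) = \beta|_{V^\perp \times V^\perp}$ is pinned down and the cokernel identification $V^\perp/L \cong (V^\perp/V)/W$ is recognised as an instance of the third isomorphism theorem, the remaining content of the lemma reduces to a diagram chase and a rank count, so no further geometric input is required.
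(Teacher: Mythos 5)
Your argument is correct, and it is a self-contained version of what the paper simply outsources: the paper's entire proof is a citation of \cite[Proposition 6.5]{QSS}, whereas you verify the statement directly by sheaf-theoretic bookkeeping. Your route is the natural one: $L=\pi^{-1}(W)$ gives $V\subset L$ and $L/V\cong W$ by base change along $\pi$; admissibility of $L\subset E$ follows because $V^\perp/L\cong(V^\perp/V)/W\cong W^\vee$ and $E/V^\perp\cong V^\vee$ are locally free, so $E/L$ is an extension of locally free sheaves; the rank count gives $\rk L=n$; and the compatibility $\beta'(\pi x,\pi y)=\beta(x,y)$ yields $L\subset L^\perp$. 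The only step where you compress something that deserves a sentence is the final equality $L=L^\perp$: the claim that the fibers of $L^\perp/L$ vanish uses that both $L(x)\to E(x)$ and $L^\perp(x)\to E(x)$ stay injective after passing to residue fields, which is exactly where the admissibility (local splitness of the inclusions) you established earlier is needed; with that observed, the dimension comparison inside $E(x)$ plus Nakayama does force $L=L^\perp$. So the proposal is complete in substance; what the paper's citation buys is brevity and a reference valid in the generality of \cite{QSS} (forms in additive categories), while your direct argument makes transparent that nothing beyond local freeness of the relevant quotients and non-degeneracy of the induced form on $V^\perp/V$ (cf.\ \cite[Theorem 1.1.32]{Balmer}) is used.
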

\begin{proof}
See \cite[Proposition 6.5]{QSS}.
%Consider the diagram
%	\[\xymatrix{ W \ar@{>->}[r]   & V^\perp/V \ar@{->>}[r] & W^\vee   \\
%		L  \ar[u]^-{s} \ar@{>->}[r]    & \ar[u] V^\perp \ar@{->>}[r] & C \ar[u]^-{\cong} \\
%		V \ar[u]\ar@{=}[r] & V \ar[u] \ar[r] &\  0\ , \ar[u]}\]
%	where the left upper square is cartesian and $C$ is the cokernel of $L \to V^\perp$. By construction, the kernel of $s: L \to W$ is $V$. We wish to show that $L = L^\perp$ in $E$.\ To show this, we look at another diagram:
%	\[ \xymatrix{  L  \ar@{=}[d] \ar@{>->}[r]    & \ar@{>->}[d] V^\perp \ar[r] & W^\vee \ar[d]^-{s^\vee} \\
%		L \ar@{>->}[r]^-{j} & \EC \ar[r]^-{j^\vee \beta} & L^\vee.}\]
%	Note that the composition of the morphisms on the upper row is zero. This implies that in the bottom row, one has $j^\vee \beta j = 0$. To see the bottom row is exact, we use \cite[Chapter I.2, Proposition 1]{Kne77}. The result follows. 
\end{proof}

\begin{remark}\label{rmk:tildeE} Start with a complete flag $E_\bullet$. By Proposition \ref{Prop: rank two} and Lemma \ref{lem:lift-sublag}, we see that there exists a unique complete flag $\tilde{E}_\bullet$ such that $\tilde{E}_i = E_i$ for $i\leq n-1$ and for which $\tilde{E}_n/E_{n-1}, E_n/E_{n-1}$ form the two Lagrangians of $E^{n-1}$. Therefore, this verifies that \cite[page 77]{FP} applies to our situation. 
\end{remark}

\subsection{Isotropic Grassmann bundles}
Even without a symmetric structure on $\EC$, the Grassmannian scheme $Gr(d,\EC)$ can be defined on the functor of points as
\begin{align*} Gr(d,\EC)(X) & = \Big\{ \U \subset \EC_X : \EC_X/ \U \textnormal{ is a locally free $\SO_X$-module of rank $2n-d$} \Big\} \\
	& = \Big\{ \U \subset \EC_X : \U(x)  \hookrightarrow \EC_X(x) \textnormal{ is a $k(x)$-vector subspace of rank $d$,  $\forall x \in X$} \Big\},
 \end{align*}
where for every given scheme $X$ one defines $E_X := p^*E$ with the structure morphism $p: X \to S$ (cf. \cite[p. 210 -211]{GW}). Let $L_d$ be the universal bundle of $Gr(d,E)$. It fits into the exact sequence
	 $$ \xymatrix{ 0 \ar[r] & L_d \ar[r] & E_{Gr(d,\EC)} \ar[r] & Q_d \ar[r] & 0, }$$
 where $Q_d$ is called the universal quotient bundle. The tangent bundle of $Gr(d,E)$ can be identified with the bundle $Hom(L_d, Q_d)$ via a second fundamental form homomorphism, cf. \cite[Appendix B.5.8]{Fu}. 

\begin{Def}
	Define $OG(n,\EC)$ to be the subscheme of the Grassmannian $Gr(n,\EC)$ that parametrizes maximal totally isotropic  subbundles in $E$ with respect to the form $\beta$. More concretely, the subscheme $OG(n, \EC)$ is defined as the locus where
	the sequence 
	 \begin{equation}\label{eq:universal-exact-seq}
	 	\xymatrix{ 0 \ar[r] & L_n \ar[r]^-{i} & E_{Gr(n,\EC)} \ar[r]^-{i^\vee \circ \beta} & L_n^\vee \ar[r] & 0 }
	 \end{equation} 
	 on $Gr(n,\EC)$ is exact. Denote the correspondent embedding by $\kappa:OG(n,\EC)\hookrightarrow Gr(n,\EC)$. 
\end{Def}

\begin{lem}\label{lem:tangentOGn} The relative tangent bundle $T_{OG(n,E)/S}$ of $OG(n,E)$ over $S$ is isomorphic to $ \wedge^2( \kappa^* L_n^\vee)$. 
\end{lem}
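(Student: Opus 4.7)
The plan is to realise $OG(n,E)$ as the vanishing locus of a natural regular section of $Sym^2(L_n^\vee)$ on $Gr(n,E)$ and then deduce the formula from the conormal exact sequence together with the standard description of $T_{Gr(n,E)/S}$. Concretely, restricting $\beta$ to the tautological subbundle $L_n\hookrightarrow E_{Gr(n,E)}$ produces, by adjunction, a section $\sigma$ of $L_n^\vee\otimes L_n^\vee$; since $\beta$ is symmetric and $\tfrac{1}{2}\in\SO_S$, this section factors through $Sym^2(L_n^\vee)$, and by construction its zero locus is exactly $OG(n,E)$.

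Next, I would verify that $\sigma$ is a regular section. The rank $\binom{n+1}{2}$ of $Sym^2(L_n^\vee)$ equals the codimension $n^{2}-\binom{n}{2}$ of $OG(n,E)$ in $Gr(n,E)$, so the expected and actual codimensions coincide; a direct local check on a standard affine chart of $Gr(n,E)$, where $L_n$ is realised as the graph of an $n\times n$ matrix $A$ of coordinates and isotropy becomes $A+A^t=0$, confirms that the $\binom{n+1}{2}$ defining equations are independent. Consequently $\kappa$ is a regular closed immersion with conormal bundle $N^\vee_\kappa \cong Sym^2(\kappa^*L_n^\vee)$.

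Finally, apply the tangent exact sequence
\[ 0 \longrightarrow T_{OG(n,E)/S} \longrightarrow T_{Gr(n,E)/S}\big|_{OG(n,E)} \longrightarrow N_\kappa \longrightarrow 0. \]
The identification $T_{Gr(n,E)/S}\cong Hom(L_n,Q_n)$ recalled just above the lemma, combined with the isomorphism $Q_n|_{OG(n,E)}\cong \kappa^*L_n^\vee$ supplied by \eqref{eq:universal-exact-seq}, yields $T_{Gr(n,E)/S}\big|_{OG(n,E)} \cong \kappa^*L_n^\vee \otimes \kappa^*L_n^\vee$. A naturality argument identifies the surjection onto $N_\kappa\cong Sym^2(\kappa^*L_n^\vee)$ with the canonical symmetrisation map, whose kernel (thanks to $\tfrac{1}{2}\in\SO_S$) is precisely $\wedge^2(\kappa^*L_n^\vee)$, giving the claim. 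The main obstacle is checking the compatibility between the second fundamental form description of $T_{Gr(n,E)/S}$ and the section $\sigma$, so as to guarantee that this surjection is indeed symmetrisation; once that diagram chase is carried out, the remaining steps are formal consequences of the splitting $L_n^\vee\otimes L_n^\vee \cong Sym^2(L_n^\vee)\oplus\wedge^2(L_n^\vee)$.
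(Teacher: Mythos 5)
Your proposal follows essentially the same route as the paper: realise $OG(n,E)$ as the zero locus of the regular section of $S^2(L_n^\vee)$ induced by $\beta|_{L_n}$, so that $N_\kappa\cong S^2(\kappa^*L_n^\vee)$, identify $\kappa^*T_{Gr(n,E)/S}\cong \kappa^*L_n^\vee\otimes\kappa^*L_n^\vee$ using $\kappa^*Q_n\cong\kappa^*L_n^\vee$, and match the tangent sequence with $0\to\wedge^2(\kappa^*L_n^\vee)\to\kappa^*L_n^\vee\otimes\kappa^*L_n^\vee\to S^2(\kappa^*L_n^\vee)\to 0$. The only slip is terminological: in your second paragraph it is the normal bundle, not the conormal bundle, that is $S^2(\kappa^*L_n^\vee)$ (which is what you in fact use at the end), and your local-chart check of regularity is extra detail the paper simply asserts.
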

\begin{proof}
Note that $OG(n,\EC)$ can also be viewed as the zero locus of the regular section $\beta|_{L_n}: L_n \otimes L_n \to \SO$ in the symmetric power $S^2(L_n^\vee)$. As a consequence the normal bundle $N_\kappa$ of $OG(n,\EC)$ in $ Gr(n,\EC)$  is isomorphic to $S^2(\kappa^* L_n^\vee)$.\ The sequence (\ref{eq:universal-exact-seq}) becomes exact on $OG(n,\EC)$, one has $ \kappa^* Q_n\cong\kappa^*L_n^\vee $. Hence, 
$$\kappa^* T_{Gr(n,E)/S} = \kappa^* Hom(L_n, Q_n) = Hom( \kappa^*L_n,  \kappa^* Q_n) = \kappa^*L_n^\vee \otimes \kappa^*L_n^\vee \, .$$
 This implies that the exact sequence of bundles on $OG(n,E)$ from \cite[Appendix B.7]{Fu}
$$ \xymatrix{0 \ar[r] & T_{OG(n,E)/S} \ar[r] & \kappa^* T_{Gr(n,E)/S} \ar[r] & N_\kappa \ar[r] & 0 } $$
gets identified with the usual exact sequence
$$ \xymatrix{0 \ar[r] & \wedge^2 (\kappa^* L_n^\vee)  \ar[r] & \kappa^* L_n^\vee \otimes \kappa^* L_n^\vee \ar[r] & S^2(\kappa^* L_n^\vee) \ar[r] & 0 \,, } $$
 showing that $T_{OG(n,E)/S} \cong \wedge^2 (\kappa^* L_n^\vee) $. 
\end{proof}
\begin{remark}
For simplicity we will drop $\kappa^*$ from the notation and simply write $L_n$ instead of the more precise $\kappa^* L_n$ to refer to the pullback bundle to $OG(n,E)$.  
\end{remark}
If $(E,\beta)$ is metabolic, then by base-change $(E_X,\beta_X)$ is also metabolic. On the functor of points, we have
	\[ OG(n,\EC)(X) : = \Big\{ N \in Gr(n,\EC)(X) : N^\perp = N  \Big\}. \]
Elements in $OG(n,\EC)(X)$ are precisely the Lagrangians of $(E_X, \beta_X)$. 
%Note that the condition $N^\perp = N $ means that the sequence
%$$ \xymatrix{ 0 \ar[r] & N \ar[r]^-{i} & E_X \ar[r]^-{ i^\vee \circ \beta_X} & N^\vee \ar[r] & 0} $$
%is exact where $i$ is the inclusion and $N^\vee$ is the dual bundle of $N$, cf.\ \cite{Balmer}.\ 
The following fact is well-known.  
\begin{lem}\label{lem: independent of points}
If $N$ and $N'$ are Lagrangians of $(E_X,\beta_X)$, then the function 
\begin{align*}
\Gamma: X &\to \mathbb{Z} /2\mathbb{Z}  \\
x &\mapsto \rk(N(x)\cap N'(x))
\end{align*}
 is constant on each connected component of $X$. 
\end{lem}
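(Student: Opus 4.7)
My plan is to verify the statement Zariski-locally on $X$. Localising at an arbitrary point $x \in X$, we may assume $X = \Spec R$ for a local ring $R$. Because every extension of finitely generated projective modules over $R$ splits, the Lagrangian $N$ gives rise to a hyperbolic decomposition $E_X = N \oplus L$ with $L$ a complementary Lagrangian; choosing suitable bases, $\beta$ becomes the standard hyperbolic pairing with $N = \langle e_1,\dots,e_n\rangle$ and $L = \langle f_1,\dots,f_n\rangle$. Then $N'$ is the column span of a $2n \times n$ block matrix $\begin{pmatrix} P \\ Q \end{pmatrix}$ with $P, Q \in M_n(R)$; the Lagrangian condition on $N'$ reads $P^T Q + Q^T P = 0$, i.e.\ $P^T Q$ is skew-symmetric, and at every point $y \in \Spec R$ one has $N(y) \cap N'(y) = \ker Q(y)$. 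In particular $\Gamma(y) \equiv n - \rk Q(y) \pmod 2$, so it suffices to show that the parity of $\rk Q(y)$ is constant.

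Let $r = \rk Q(x)$. Applying Witt's theorem over the residue field and lifting the resulting element of $SO(E(x))$ to $SO(E)$ over $R$ (possible by smoothness of $SO(E) \to S$), I may assume at the closed point $x$ that
\[
P(x) = \begin{pmatrix} I_{n-r} & 0 \\ 0 & 0 \end{pmatrix}, \qquad Q(x) = \begin{pmatrix} 0 & 0 \\ 0 & I_r \end{pmatrix}.
\]
By Nakayama the blocks $P_{11}$ and $Q_{22}$ are then invertible over $R$. A column operation on $N'$ (which automatically preserves the Lagrangian condition, being merely a change of basis of $N'$) followed by a row operation from the parabolic stabiliser of $N$ in $O(E)$ (which preserves the Lagrangian condition since it lies in $O(E)$) brings $Q$ into the form
\[
Q = \begin{pmatrix} \Omega & 0 \\ 0 & I_r \end{pmatrix}, \qquad \Omega \in M_{n-r}(R), \ \Omega(x) = 0,
\]
while $P_{11}$ remains invertible. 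The upper-left block of the condition ``$P^T Q$ skew-symmetric'' now states that $P_{11}^T \Omega$ is skew-symmetric; since $P_{11}$ is invertible, it follows that $\rk \Omega(y) = \rk (P_{11}^T \Omega)(y)$ is the rank of a skew-symmetric matrix over the field $k(y)$, and hence is even, for every $y \in \Spec R$. Consequently $\rk Q(y) = r + \rk \Omega(y) \equiv r \pmod 2$ is constant on $\Spec R$, which concludes the argument.

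The main technical point is arranging the block reduction of $\begin{pmatrix} P \\ Q \end{pmatrix}$ above over the local ring $R$: one must lift the Witt-type orthogonal change of basis from the residue field (handled by smoothness of the orthogonal group scheme $O(E) \to S$), and verify that the specific row operations used to clear the off-diagonal block of $Q$ lie inside the parabolic subgroup $\bigl\{ \begin{pmatrix} A & C \\ 0 & A^{-T} \end{pmatrix} : A \in GL_n,\ A^{-1}C \text{ skew} \bigr\}$ stabilising $N$. Once these preparations are in place, the parity conclusion reduces at once to the classical fact that a skew-symmetric bilinear form over a field has even rank.
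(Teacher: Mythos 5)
Your argument is correct in outline but takes a genuinely different route from the paper's. The paper (following Mumford) covers $X$ by affines on which $E_X$ is hyperbolic with respect to both $N$ and $N'$, compares the two trivialisations by an isometry $\varphi$, and exploits that $\det\varphi=\pm 1$ is constant while pointwise $\det\varphi(s)=(-1)^{n-q}$ with $q=\rk(N(s)\cap N'(s))$ by an exercise in Bourbaki. You instead work over the local ring $R=\mathcal{O}_{X,x}$ and reduce the parity statement to the fact that a skew-symmetric matrix over a field of characteristic $\neq 2$ has even rank; the block computation itself (isotropy of $N'$ $\Leftrightarrow$ $P^TQ$ skew, $N(y)\cap N'(y)=\ker Q(y)$, the column operations and the row operation from the parabolic, and the final parity count) is correct.

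There is, however, a genuine gap at the normalisation step. Smoothness of the orthogonal group scheme over $S$ does not allow you to lift a $k(x)$-point to an $R$-point for an arbitrary local ring $R$: smoothness gives liftings only along nilpotent thickenings (or over Henselian local rings), and already for an \'etale $R$-scheme such as $\Spec \mathbb{Z}_{(5)}[t]/(t^2-6)$ the map $X(R)\to X(k)$ fails to be surjective. Moreover, even granting a lift $\tilde h$ of the Witt isometry $h$, you need $\tilde h$ to stabilise $N$ over all of $\Spec R$, not merely $N(x)$ at the closed point; otherwise the identification $N(y)\cap N'(y)=\ker Q(y)$, and with it your block bookkeeping, breaks down at the other points $y$ of $\Spec R$. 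Both defects are repairable inside your framework: Witt's theorem lets you choose $h$ in the stabiliser of $N(x)$, i.e.\ in the parabolic $\left\{\left(\begin{smallmatrix} A & C\\ 0 & A^{-T}\end{smallmatrix}\right)\right\}$ over $k(x)$, and such elements lift trivially to $R$ (lift $A$ entrywise, which stays invertible over the local ring, and lift the skew matrix $A^{-1}C$ entrywise). Alternatively you can avoid lifting altogether: since $\rk Q(x)=r$, some $r\times r$ minor of $Q$ is a unit in $R$, and elementary column operations together with permutations and one unipotent row operation from the parabolic bring $Q$ to the form $\left(\begin{smallmatrix}\Omega & 0\\ 0 & I_r\end{smallmatrix}\right)$ with $\Omega(x)=0$ directly over $R$; then the skew relation forces $P_{21}(x)=0$, and invertibility of $P_{11}(x)$ follows from injectivity of $N'(x)\to E(x)$, so no residue-field normalisation is needed.

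Two smaller points. Localising at a point is not the same as checking the statement Zariski-locally: constancy of $\Gamma$ on each $\Spec\mathcal{O}_{X,x}$ only says $\Gamma$ is invariant under generisation and specialisation, and to conclude constancy on connected components you should invoke the standard fact (valid for the locally noetherian schemes of the paper's convention) that the equivalence relation generated by specialisation has exactly the connected components as its classes. Also, splitting the extension $0\to N\to E_X\to E_X/N\to 0$ over $R$ gives a complement but not yet an isotropic one; producing the Lagrangian complement $L$ uses the usual correction with $\tfrac12$ (metabolic implies hyperbolic when $2$ is invertible), which is the same input the paper takes from Knebusch.
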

\begin{proof}
The proof basically follows from \cite[the proof of the theorem is on p.\ 184]{Mumford}. For the readers' convenience, we provide more details. We may assume that $X$ is connected. Let $U_i = \Spec \,A_i$ ($i \in I$) be a finite affine open cover of $X$ on which $N$ and $N'$ are both trivial.\ One can find isometries $\psi_i : (E_{U_i}, \beta_{U_i}) \xrightarrow{\cong}  \mathbb{H}(N_{U_i})$ and $ \phi_i : (E_{U_i}, \beta_{U_i}) \xrightarrow{\cong}  \mathbb{H}(N'_{U_i})  $, where $\mathbb{H}(N_{U_i})$ and  $\mathbb{H}(N'_{U_i})$ are hyperbolic spaces. Therefore, there exists an isometry  $\varphi_i : (E_{U_i}, \beta_{U_i}) \xrightarrow{\cong} (E_{U_i}, \beta_{U_i}) $ such that $\varphi_i(N_{U_i}) = N'_{U_i} $. Now, by the definition of isometry, we get the equality $ \beta_{U_i} = \varphi_{i}^\vee \beta_{U_i} \varphi_{i} $. By taking the determinant, we obtain $(\det \varphi_i)^2 = 1 \in A_i$. Since $U_i$ is irreducible, we see that $\det \varphi_i = \pm 1$. 

Note that, since $X$ is irreducible, %(why can we assume that X is irreducible?)
one has $U_i \cap U_j \neq \emptyset$.\ Take a point $s \in U_i \cap U_j$.\ By \cite[Exercise 18 (d) in Section 6]{Bourbaki}, we deduce that $\det \varphi_i(s) = \det \varphi_j(s) = (-1)^{n-q}$ over the residue field $k(s)$, where $q = \rk (N(s) \cap N'(s))$. Since $2$ is invertible, this implies that $$\det \varphi_i = \det \varphi_i(s) = \det \varphi_j(s) = \det \varphi_j = (-1)^{n-q}.$$ 
Therefore, $\Gamma$ is well-defined on each intersection $U_i \cap U_j$ and constant on each $U_i$. The result follows.
\end{proof}
\begin{remark}
For readers interested in more general settings, it would seem that the proof of Lemma \ref{lem: independent of points} generalises to integral noetherian schemes with $\frac{1}{2}$ in their global sections.
\end{remark}
In particular, Lemma \ref{lem: independent of points} implies that the scheme $OG(n,\EC)$ has two disjoint connected components $OG_+(n,\EC)$ and $OG_-(n,\EC)$ defined as
	\begin{align*}
		OG_\pm(n,\EC)(X) & := \Big\{ N \in OG(n, \EC)(X)  : \rk ( \EC_{n}(x) \cap N(x)) \equiv n_\pm \textnormal{ (mod } 2)  \textnormal{ for any } x \in X  \Big\}, 	\end{align*}
		where $n_+ = n$ and $n_- = n-1$. In the sequel we shall drop the mention of $X$ to avoid cumbersome notation. The connected component $OG_+(n,\EC)$ is usually called the \textit{spinor variety} (or \textit{maximal  isotropic Grassmannian}).
%\begin{remark}
%In this paper we only deal with the split hyperbolic case, as we do not know how to define the global isomorphism $\tau$ in the proof of Lemma \ref{lem:OG+OG-} if we work over complete flags. In other words, we do not know if the two connected components are still isomorphic to each other in the general setting. 
%\end{remark}
	
In view of Proposition \ref{Prop: rank two} it is clear that any metabolic space $E$ of rank $2$ admits a complete flag
$$ 0= E_0 \subset E_1 = E_1^\perp \subset E_2 = E. $$
Moreover, Proposition \ref{Prop: rank two} also implies the following result.
\begin{Coro}
If $E$ is metabolic of rank 2, then $OG_{\pm}(1,E) = S$. 
\end{Coro}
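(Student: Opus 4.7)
The plan is to read this Corollary as a direct consequence of Proposition \ref{Prop: rank two}: the two global Lagrangians $N$ and $N'$ of $E$ will provide two morphisms $S \to OG(1,E)$, which I claim identify $S$ with $OG_+(1,E)$ and $OG_-(1,E)$ respectively. Since $OG_\pm(1,E)$ are the two open and closed subschemes of $OG(1,E)$ cut out by a parity condition (by Lemma \ref{lem: independent of points}), it suffices to verify the statement on the functor of points.

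Given any $S$-scheme $X$, base change yields a rank-$2$ metabolic space $(E_X,\beta_X)$, and the pullbacks $N_X := p^*N$ and $N'_X := p^*N'$ are two Lagrangians of $E_X$. Working on each connected component of $X$, Proposition \ref{Prop: rank two} (applied after a further affine cover, which is precisely how its proof proceeds) shows that these two are the only Lagrangians present. Consequently $OG(1,E)(X) = \{N_X, N'_X\}$, and because both pullbacks exist globally on $X$, the association $X \mapsto \{N_X, N'_X\}$ is naturally a disjoint union of two copies of the one-point functor represented by $S$.

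It remains to identify which component each Lagrangian belongs to. The complete flag on $E$ reduces to $0 \subset E_1 \subset E$ with $E_1 = E_1^\perp = N$, so by definition $\rk(N(x) \cap E_1(x)) = 1 \equiv n_+ \pmod 2$, placing $N$ in $OG_+(1,E)$. On the other hand, the splitting $E \cong N \oplus N^\vee$ from Proposition \ref{Prop: rank two} forces $N(x) \cap N'(x) = 0$ for every $x$, so $\rk(N'(x) \cap E_1(x)) = 0 \equiv n_- \pmod 2$ and $N' \in OG_-(1,E)$. Combined with the previous paragraph, each of the two morphisms $S \to OG(1,E)$ determined by $N$ and $N'$ is an isomorphism onto the corresponding connected component, giving $OG_\pm(1,E) = S$.

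No genuine difficulty arises: the entire argument is bookkeeping on top of Proposition \ref{Prop: rank two}, and the only mild subtlety is making sure the base-changed Lagrangians are globally the pullbacks $N_X$ and $N'_X$ rather than some locally-defined alternative, which is precisely the gluing content already extracted in the proof of that proposition.
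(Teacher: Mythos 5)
Your proposal is correct and follows the same route as the paper, which simply derives the corollary from Proposition \ref{Prop: rank two} (together with the flag $0 \subset E_1 = E_1^\perp \subset E$); you merely spell out the functor-of-points bookkeeping that the paper leaves implicit. The only nitpick is that for disconnected $X$ the set $OG(1,E)(X)$ consists of Lagrangians that may equal $N_X$ on some components and $N'_X$ on others, but this does not affect your conclusion since the parity condition defining $OG_\pm(1,E)$ forces a single global choice, so each component functor is indeed represented by $S$.
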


\subsection{The closed embedding}\label{sect:closed embeds}

Let $V$ be a totally isotropic subbundle of $\EC$. Consider the closed subscheme $\iota^V_\pm:OG_\pm^{V}(n,\EC) \hookrightarrow OG_\pm(n, \EC)$ given by
\begin{align*}
	OG_\pm^{V}(n,\EC) = \{N \in OG_\pm(n,\EC) : V \subset N \}.\end{align*}
In the special case $V =\EC_j$, we have the following result.
\begin{lem}\label{lem:embeddingPhi}
 The morphism
	\begin{align*}
		\Phi: OG_\pm^{\EC_j}(n,\EC) & \to OG_\pm(n-j, \EC^j) \\  
		\EC_j \subset N  \subset \EC &\mapsto N/\EC_j \subset \EC^j =  \EC_j^\perp/\EC_j
	\end{align*}
	is an isomorphism.
\end{lem}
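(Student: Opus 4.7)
The strategy is to exhibit the inverse $\Psi$ of $\Phi$ directly on the functor of points, using Lemma \ref{lem:lift-sublag}. First I would verify that $\Phi$ is well-defined as a morphism: given an $X$-point of $OG_\pm^{\EC_j}(n,\EC)$, i.e.\ a Lagrangian $N \subset \EC_X$ with $\EC_{j,X} \subset N$, the equality $N = N^\perp$ forces $N \subset \EC_{j,X}^\perp$, so that $N/\EC_{j,X}$ is an admissible rank $n-j$ subbundle of $\EC^j_X = \EC_{j,X}^\perp/\EC_{j,X}$. Since the induced form $\beta^j$ is inherited from $\beta$, the isotropy of $N$ descends to $N/\EC_{j,X}$, and the rank equality $n-j = \tfrac{1}{2}\rk(\EC^j)$ confirms that $N/\EC_{j,X}$ is Lagrangian.

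Next, for the inverse, I would invoke Lemma \ref{lem:lift-sublag} with $V = \EC_{j,X}$: given a Lagrangian $W \subset \EC^j_X$, pulling back along the quotient $\EC_{j,X}^\perp \twoheadrightarrow \EC^j_X$ produces a Lagrangian $L$ of $\EC_X$ containing $\EC_{j,X}$ such that $L/\EC_{j,X} = W$. Setting $\Psi(W) := L$ yields a morphism of functors, and the relations $\Phi \circ \Psi = \mathrm{id}$ and $\Psi \circ \Phi = \mathrm{id}$ follow from the uniqueness statement built into Lemma \ref{lem:lift-sublag}.

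To see that $\Phi$ respects the two connected components, I would compare parities fiberwise. Because $\EC_j \subset \EC_n \cap N$, one has the identification
\[ (\EC_n/\EC_j)(x) \cap (N/\EC_j)(x) = (\EC_n(x) \cap N(x))/\EC_j(x), \]
so that $\rk\bigl( \EC^j_{n-j}(x) \cap (N/\EC_j)(x) \bigr) = \rk\bigl( \EC_n(x) \cap N(x) \bigr) - j$. The $+$-component in $OG(n,\EC)$, characterised by rank $\equiv n \pmod 2$, thus corresponds to rank $\equiv n-j \pmod 2$, i.e.\ the $+$-component in $OG(n-j,\EC^j)$; the analogous statement for $-$ follows likewise.

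The argument is essentially formal once Lemma \ref{lem:lift-sublag} is in hand, so no serious obstacle arises. The only point requiring a bit of care will be ensuring that all the bundle-level operations (quotient by $\EC_{j,X}$, pullback along $\EC_{j,X}^\perp \to \EC^j_X$, formation of orthogonal complements) commute with arbitrary base change $X \to S$ and preserve admissibility of subbundles; this is standard for metabolic bundles admitting complete flags, as set up at the start of Section 2.
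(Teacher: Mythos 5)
Your proposal is correct and follows essentially the same route as the paper: the paper likewise defines the inverse $\Psi$ by lifting a Lagrangian $W \subset \EC^j$ via Lemma \ref{lem:lift-sublag} with $V=\EC_j$, citing \cite[Proposition 6.5]{QSS} for $(N/\EC_j)^{\perp_{\beta^j}}=N/\EC_j$ and leaving the rest as straightforward. Your extra verifications (well-definedness and the parity computation $\rk(\EC^j_{n-j}(x)\cap (N/\EC_j)(x))=\rk(\EC_n(x)\cap N(x))-j$ matching the $\pm$ components) are sound details that the paper leaves implicit.
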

\begin{proof}
	Set $N^j : = N/\EC_j$.\ By \cite[Proposition 6.5]{QSS}, we see that $(N^j)^{\perp_{\beta_j}} = N^j$.\ 
	%In order to verify that the requirement on the dimension is satisfied, we can assume that we are working over fields. Suppose that $\rk(N \cap \EC_{n} ) \equiv n \mod 2$, then we need to check that
	%\[ \rk(N^j \cap \EC_n^j) \equiv n-j \mod 2\]
	%holds.
	%We use the identity $N^j \cap \EC_n^j = (N \cap \EC_n)/\EC_j $. Thus, we have
	%\[ \rk (N^j \cap \EC_n^j ) = \rk((N \cap \EC_n)/\EC_j) = \rk (N \cap \EC_n) -j. \]
	%This shows that $\Phi$ is well-defined.
	To see that $\Phi$ is an isomorphism, we construct an inverse morphism
	$\Psi: OG_\pm(n-j, \EC^j) \to OG_\pm^{\EC_j}(n,\EC)  $
	by sending a Lagrangian $W \subset \EC^j$ to the Lagrangian $L$ of $\EC$ constructed as in Lemma \ref{lem:lift-sublag}. It is straightforward to check that $\Phi$ and $\Psi$ are inverses of each other. 
	\end{proof}
%A similar proof yields the following results.
%\begin{Coro}
%The morphism
%	\begin{align*}
%		\Phi: OG_\pm^{\EC_j}(n,\EC) &\to OG_\pm^{\EC_k^{j-k}}(n-(j-k),\EC^{j-k}) \\
%		\EC_j \subset N &\mapsto \EC_j/\EC_k \subset N/\EC_k
%	\end{align*}
%	is an isomorphism.
%\end{Coro}

\subsection{The Blow-up setup} In this subsection, we shall study the blow-up of the closed embedding 
$$\iota_\pm:OG^{E_1}_\pm(n,E) \hookrightarrow OG_\pm(n,E),$$ 
where $\iota_\pm : = \iota^{E_1}_\pm$ and we use this simplification if no confusion may occur.\ Let $L_n^\pm$ be the pullback of the universal bundle $L_n$ on $OG(n,E)$ along the canonical embedding $OG_\pm(n,E) \hookrightarrow OG(n,E)$. 
Consider the universal exact sequence
$$ \xymatrix{ 0 \ar[r] & L_n^\pm \ar[r] & E \ar[r] & (L_n^\pm)^\vee \ar[r] & 0}$$
on $OG_\pm(n,E)$. The subscheme $\OGo(n,E)$ of $\OG(n,E)$ is precisely the vanishing locus of the composite
$$\xymatrix{ & & E_1 \ar[d] \ar@{-->}[dr] \\
0 \ar[r] & \Ln \ar[r]  & E  \ar[r]  & (\Ln)^\vee \ar[r] & 0 }$$
or, equivalently, the vanishing locus of the composite 
$$\xymatrix{ 0 \ar[r] & \Ln \ar[r] \ar@{-->}[dr] & E  \ar[r] \ar[d] & (\Ln)^\vee \ar[r] & 0 \\
                                                & & (E_1)^\vee }$$
One can pullback the bundle $\Ln$ over $\OG(n,E)$ via the closed embedding $\iota_\pm: \OGo(n,E) \hookrightarrow \OG(n,E)$, which is  denoted by $\ttLn$. 
\begin{Def}
Define $Bl_\pm(n,E)$ to be the Grassmannian scheme $Gr(n-1, \tLn)$ over $\OGm(n,E)$, and denote by $\tilde{\alpha}_\mp:Bl_\pm(n,E)\to  \OGm(n,E) $ the projection. 
\end{Def}                                               
Let $P_{n-1}^\mp$ be the universal bundle of $Bl_\pm(n,E): = Gr(n-1, \tLn)$, and let $P^\mp_{n-1} \hookrightarrow \tLn$ be the canonical inclusion. Note that one has $E_1 \subset \tLn$ and $P^\mp_{n-1} \subset \tLn \subset E_1^\perp$.\ Consider the filtration
$$P^\mp_{n-1} \subset \tLn \subset (P^\mp_{n-1})^\perp \subset E $$
over $Bl_\pm(n,E)$.\ By taking the quotient, we obtain a Lagrangian $\tilde{T}^\mp: = \tLn /P^\mp_{n-1}$ inside the metabolic space $ (P^\mp_{n-1})^\perp/ P^\mp_{n-1}$ of rank $2$. By Proposition \ref{Prop: rank two}, we see that inside this metabolic space there is a unique Lagrangian $T^\pm$
% inside  the metabolic space $ (P^\mp_{n-1})^\perp/ P^\mp_{n-1}$
, which is different from $\tilde{T}^\mp$ and is in the other component.\ By Lemma \ref{lem:lift-sublag}, the bundle $T^\pm$ can be lifted to a Lagrangian $\pi L^\pm_n \subset E$ such that 
$$P^\mp_{n-1} \subset \pi L^\pm_n \subset (P^\mp_{n-1})^\perp \subset E. $$
 By the universal property of $\OG(n,E)$, we get a morphism
$$ \pi_\pm: Bl_\pm(n,E) \to \OG(n,E)  $$
from the Lagrangian $\pi L^\pm_n \subset E$ over $Bl_\pm(n,E)$ such that $\pi L^\pm_n = \pi^*_\pm \Ln$. Thus, we just adopt a simplified notation and write $\Ln$ to refer to the pullback $\pi L^\pm_n$ over $Bl_\pm(n,E)$. 
\begin{Def}
Define $E_\pm(n,E)$ to be the vanishing locus of the composite
$$\xymatrix{  & E_1  \ar[d] \ar@{-->}[dr]\\
P^\mp_{n-1}   \ar@{>->}[r] & \tilde{L}^\mp_n  \ar[r]& \tilde{L}^\mp_n/ P^\mp_{n-1}  }$$
over $Bl_\pm(n,E)$. Define $\tilde{\iota}_\pm: E_\pm(n,E) \hookrightarrow Bl_\pm(n,E)$ to be the closed embedding, and $\tilde{v}_\pm: U_\pm(n,E) \hookrightarrow Bl_\pm(n,E)$ to be its open complement.\  Define $v_\pm:= \pi_\pm \circ  \tilde{v}_\pm$ and $\alpha_\mp: = \tilde{\alpha}_\mp \circ  \tilde{v}_\pm$.
\end{Def}
Note that $P_{n-1}^\mp = \Ln \cap \tLn$, and $E_1\subset P_{n-1}^\mp $ if and only if $E_1 \subset \Ln$. Therefore, $E_\pm(n,E) $ can be identified with the fibre product $ \pi^{-1}_\pm OG_\pm^{E_1}(n,E)$, where the pullback morphism
$$ \tilde{\pi}_\pm: E_\pm(n,E) \to \OGo(n,E)  $$
classifies $ E_1 \subset P_{n-1}^\pm  \subset L_n^\pm$ over $E_\pm(n,E)$ by the universal property of $\OGo(n,E)$. Note also that $E_+(n,E) = E_-(n,E)$ by construction, and $\tilde{\alpha}_\mp \circ \tilde{\iota}_\pm = \tilde{\pi}_\mp$. Therefore, we only write $E(n,E)$ to denote $E_\pm(n,E)$. The following result is proved in \cite{Perrin} when the base is $\mathbb{C}$. 

\begin{theo}\label{theo:blowup}
 The scheme $Bl_\pm(n,E)$ is the blow-up of $OG_\pm(n,E)$ along $OG^{E_1}_\pm(n,E)$   with the exceptional fiber $E(n,E)$, and $ v_\pm: U_\pm(n,E) \to \OG(n,E)$ is the open complement of $OG^{E_1}_\pm(n,E)$ inside $OG_\pm(n,E)$.\ Moreover, the morphism $\alpha_\mp : U_\pm(n,E) \to OG_\mp(n,E)$ is an affine bundle.
\end{theo}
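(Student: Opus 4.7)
The plan is to reduce the three assertions to one functorial construction followed by the universal property of blow-ups. I first produce a geometric inverse to $v_\pm$ on $T$-points. Given $L$ a $T$-point of $OG_\pm(n,E)$ with $(E_1)_T \not\subset L$, one has $L \not\subset (E_1^\perp)_T$ (since $L=L^\perp$), so the quotient map onto the line bundle $E_T/(E_1^\perp)_T$ makes $P := L \cap (E_1^\perp)_T$ a totally isotropic subbundle of rank $n-1$. In the rank-two metabolic space $P^\perp/P$, the Lagrangian $L/P$ and the non-zero isotropic line $(E_1+P)/P$ are distinct, so by Proposition~\ref{Prop: rank two} the latter agrees with the other Lagrangian; Lemma~\ref{lem:lift-sublag} lifts it to a Lagrangian $\tilde L$ of $E_T$ containing both $P$ and $E_1$. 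Since $\rk(L\cap\tilde L)=n-1$, the parity argument behind Lemma~\ref{lem: independent of points} places $\tilde L$ in the opposite component, so $\tilde L$ is a $T$-point of $OG^{E_1}_\mp(n,E)$, and $(\tilde L,P)$ is a $T$-point of $Bl_\pm(n,E)$ which, because $E_1 \not\subset P$, lies in $U_\pm(n,E)$. The construction of $\pi L^\pm_n$ from $(P^\mp_{n-1},\tLn)$ inverts this procedure, so $v_\pm$ is an open immersion with image $OG_\pm(n,E)\setminus OG_\pm^{E_1}(n,E)$.

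For the affine bundle claim, regard $Bl_\pm(n,E)$ as $\mathbb{P}(\tLn^\vee)$ over $OG^{E_1}_\mp(n,E)$. The subscheme $E(n,E)$ is cut out by a morphism whose target is the line bundle $\tLn/P^\mp_{n-1}$, so it is the Cartier divisor $\mathbb{P}((\tLn/E_1)^\vee)$ of hyperplanes in $\tLn$ containing $E_1$. Its complement $U_\pm(n,E)$ is Zariski-locally $\mathbb{P}^{n-1}\setminus\mathbb{P}^{n-2}=\mathbb{A}^{n-1}$, so $\alpha_\mp$ is an affine bundle of rank $n-1$ onto $OG_\mp^{E_1}(n,E)$, which yields the stated morphism after postcomposing with the closed embedding into $OG_\mp(n,E)$.

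For the blow-up identification, I compute the pullback of the ideal sheaf of $OG_\pm^{E_1}(n,E)\subset OG_\pm(n,E)$: it is the image of the composite $E_1\to E\to (\Ln)^\vee$. The bilinear form identifies $(\Ln)^\vee$ with $E/\Ln$, and together with the equality $\tLn\cap\Ln=P^\mp_{n-1}$ on $Bl_\pm(n,E)$ this shows that the composite factors through the line sub-bundle $\tLn/P^\mp_{n-1}\hookrightarrow(\Ln)^\vee$; hence the pulled-back ideal is principal and equals the ideal of $E(n,E)$, so $\pi_\pm^{-1}(OG^{E_1}_\pm(n,E))=E(n,E)$ as Cartier divisors. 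The universal property of the blow-up then produces a canonical morphism from $Bl_\pm(n,E)$ to the blow-up of $OG_\pm(n,E)$ along $OG_\pm^{E_1}(n,E)$, an isomorphism over the open complement by the first step; since both exceptional divisors are $\mathbb{P}^{n-2}$-bundles over the smooth centre $OG_\pm^{E_1}(n,E)\cong OG_\pm(n-1,E^1)$ of codimension $n-1$ (Lemma~\ref{lem:embeddingPhi}), a standard blow-up criterion for regular centres upgrades this to an isomorphism. The main obstacle I anticipate is precisely this ideal factorisation, i.e.\ checking that the pulled-back composite $E_1\to(\Ln)^\vee$ lands in the line sub-bundle $\tLn/P^\mp_{n-1}$; once this is done, the Cartier divisor identification and the universal property of the blow-up conclude the proof.
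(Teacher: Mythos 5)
Your proposal is correct in substance but reaches the blow-up identification by a genuinely different route. Your first step (inverting $v_\pm$ on points: $P:=L\cap E_1^\perp$, the line $(E_1+P)/P$ as the second Lagrangian of the rank-two space $P^\perp/P$ via Proposition \ref{Prop: rank two} and Lemma \ref{lem:lift-sublag}, and the parity of $\rk(L\cap\tilde L)=n-1$ placing $\tilde L$ in the other component) is exactly the special case $W=OG_\pm(n,E)\setminus OG_\pm^{E_1}(n,E)$ of what the paper does: the paper verifies the universal property of the blow-up directly, taking an \emph{arbitrary} $f:W\to OG_\pm(n,E)$ with $f^{-1}(Z)$ an effective Cartier divisor, forming $K=\ker(L_{n,W}^\pm\to\mathcal I)$ and the unique opposite Lagrangian containing $K$, and checking existence and uniqueness of the induced map $W\to Bl_\pm(n,E)$. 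You instead only verify that $\pi_\pm^{-1}(OG_\pm^{E_1}(n,E))=E(n,E)$ is Cartier (your factorisation through $\tLn/P^\mp_{n-1}$ is correct: $E_1\subset\tLn$, the form kills $\Ln\otimes P^\mp_{n-1}$ and $P^\mp_{n-1}\otimes\tLn$, and the induced pairing between the two line bundles $\Ln/P^\mp_{n-1}$ and $\tLn/P^\mp_{n-1}$ is perfect, so the pulled-back ideal is the ideal of the vanishing locus of $E_1\to\tLn/P^\mp_{n-1}$, which is the definition of $E(n,E)$; this mirrors the paper's remark $P^\mp_{n-1}=\Ln\cap\tLn$), and then you invoke the universal property of the \emph{target} blow-up to get a comparison morphism $\varphi:Bl_\pm(n,E)\to Bl_Z(OG_\pm(n,E))$. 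The trade-off: the paper's route needs the slightly fiddly construction over a general test scheme $W$ (where $\mathcal I$ is invertible but not trivial), while yours needs an extra argument that $\varphi$ is an isomorphism.

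That last step is the only real soft spot: "a standard blow-up criterion for regular centres" is not a theorem you can cite as stated --- a proper birational map which is an isomorphism off a Cartier preimage of the centre need not be the blow-up (iterated blow-ups give counterexamples), and the usual characterisations of smooth blow-downs also require identifying the conormal sheaf of the exceptional divisor with $\SO(1)$. However, your setup gives enough to close it without that: $\varphi$ is proper, birational, an isomorphism over the complement of the exceptional divisor, and over $Z$ it is a morphism of $\mathbb{P}^{n-2}$-bundles compatible with the projections (use $\pi_\pm\circ\tilde\iota_\pm=\iota_\pm\circ\tilde\pi_\pm$ and that the centre $OG_\pm^{E_1}(n,E)\cong OG_\pm(n-1,E^1)$ has codimension $n-1$ by Lemma \ref{lem:embeddingPhi}). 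Surjectivity of $\varphi$ forces each fibre map $\mathbb{P}^{n-2}\to\mathbb{P}^{n-2}$ to be surjective, hence finite, so $\varphi$ is quasi-finite; being also proper it is finite, and a finite birational morphism onto the regular (hence normal) scheme $Bl_Z(OG_\pm(n,E))$ is an isomorphism by Zariski's main theorem. With that sentence added, your argument is complete; alternatively, verifying the universal property directly as in the paper avoids the issue altogether. Your affine-bundle step agrees with the paper's (complement of $\mathbb{P}_Y((\tLn/E_1)^\vee)$ in $\mathbb{P}_Y((\tLn)^\vee)$, an affine but in general not vector bundle over $OG_\mp^{E_1}(n,E)$).
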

 
	 The situation is depicted in the following diagram
	\begin{equation}\label{eq:keydiagram}
		\xymatrix{OG^{\EC_1}_\pm(n,E)  \ar[rr]^-{\iota_\pm} && OG_\pm(n,E)   && \ar[ll]_-{v_\pm} U_\pm(n,E) \ar[d]^-{\alpha_\mp} \ar[lld]_-{\tilde{v}_\pm} \\
		%%%%%%%%%%%%%%%%
			E(n,E)  \ar[rr]^-{\tilde{\iota}_\pm} \ar[u]^-{\tilde{\pi}_\pm} \ar@/_3pc/[rrrr]_-{\tilde{\pi}_\mp} && Bl_\pm(n,E) \ar[u]^-{\pi_\pm} \ar[rr]^-{\tilde{\alpha}_\mp} && OG^{\EC_1}_\mp(n,E) }
	\end{equation} 
and the Blow-up setup (cf. \cite[Setup 1.1]{BC}) is therefore satisfied by Theorem \ref{theo:blowup}.\ When viewed in terms of functors of points, (\ref{eq:keydiagram}) has the following interpretation:
	\[  \xymatrixcolsep{0.5pc}
	\begin{small}
	\xymatrix{
		 \Big\{L_n^\pm \ |  L_n^\pm \supset E_1\Big\}
		\ar[r] 
		&\Big\{L_n^\pm \ | E \supset L_n^\pm  \Big\}
		%\ar[r] 
		&& \Big\{L_n^\pm\ |\ L_n^\pm\not\supset E_1\Big\}
		\ar[dll]\ar[d]\ar[ll] 		\\
		% % % % % % % % % % % % % % % % % % % % % % % % % % % % % % % % % % % % % % % % % % % % % % % % % % % % % % % % % %
		\left\{(P_{n-1},L_n^\pm,L_n^\mp)\ \middle|\: \
		{\begin{matrix} 
		L_n^\pm \supset P_{n-1} 
	\\ L_n^\mp \supset P_{n-1} 
\\P_{n-1} \supset E_1
	\end{matrix}}   \right\} 
		\ar[r]\ar[u] 
		&\left\{(P_{n-1},L_n^\pm, L_n^\mp )\ \middle|\: \ 
		{\begin{matrix} L_n^\pm \supset P_{n-1} \\ L_n^\mp \supset P_{n-1}\\ L_n^\mp\supset E_1 
			\end{matrix}} \right\}
		\ar[rr]\ar[u]
		&& \left\{ L_n^\mp \ |\ 
		{\begin{matrix}L_n^\mp \supset E_1 
		 	\end{matrix}}  \right\}.	}
	\end{small} \] 

\begin{proof}
By the compatibility of blow-ups and pullbacks, one can reduce to the case when $S$ is affine, and so we can take $E$ to be free and $E_1 = \SO$. We need to check that $B: = Bl_\pm(n,E)$ has the universal property of the blow-up, \textit{i.e.}\ it is final in the category of schemes over $X: = OG_\pm(n,E)$ for which the preimage of $Z:=OG^{E_1}_\pm(n,E)$ is an effective Cartier divisor. Let's check that $B$ is an object of this category. There is an exact sequence 
$$\xymatrix{0\ar[r] & E_1 \ar[r] & \tLn \ar[r] & \tLn/E_1 \ar[r] & 0}$$
of vector bundles over $Y : = OG^{\EC_1}_\mp(n,E) $.\ By dualizing it, we note that $B \cong \mathbb{P}_Y((\tLn)^\vee)$ (resp.\ $E(n,E) \cong \mathbb{P}_Y((\tLn/E_1)^\vee)$) is a projective bundle of relative dimension $n-1$ (resp.\ $n-2$) over $Y$, %cf. \cite[Lemma 1.11]{BC12}. 
and $E(n,E)$ is an effective Cartier divisor of $B$. 

Suppose that $f: W \to X$ is a morphism for which $f^{-1}Z$ is an effective Cartier divisor. On $X$ consider the morphism of bundles $s: \Ln \to E_1^\vee\cong \SO_X$  given by the composition $\Ln \to E \to E_1^\vee \cong E/E_1^\perp$. Recall that $Z$ is the zero locus of this morphism, \textit{i.e.} $Z$ is defined by the ideal $\textnormal{im}(s) \subset \SO_X$. By assumption, $\mathcal{I}: = f^{-1} \textnormal{im}(s)\subset \SO_W$ is invertible, and $L_{n,W}^\pm \to \mathcal{I}$ is surjective. Let $K:= \ker(L_{n,W}^\pm \to \mathcal{I})$.

Now note that $K \subset E_1^\perp$ and $ K $ is a bundle of dimension $n-1$ inside the Lagrangian $L_{n,W}^\pm$ of $E_W$. Therefore, there is a unique Lagrangian $\tilde{L}_{n,W}^\mp$ of $E_W$ such that $\tilde{L}_{n,W}^\mp$ is in the opposite component from $L_{n,W}^\pm$ and that $K \subset \tilde{L}_{n,W}^\mp$. We claim that $E_1 \subset  \tilde{L}_{n,W}^\mp$.\ Note that $(K+E_1)/K$ is a Lagrangian of $E_1^\perp/E_1$. This is because 
$$(K+E_1)/K \subset (K^\perp\cap E_1^\perp)/K = ((K+E_1)/K)^\perp$$
 (cf.\ \cite[Section 6]{QSS}), and $(K+E_1)/K$ is a rank one bundle.\ If not $K+E_1 = K$, then $E_1 \subset K \subset L_{n,W}^\pm$ and $L_{n,W}^\pm \subset E_1^\perp$, which contradicts the subjectivity of $L_{n,W}^\pm \to \mathcal{I} \subset E/E_1^\perp$ and $\mathcal{I}$ is invertible.) Since $K^\perp /K$ is of rank two, we see that $(K+E_1)/K$ is either $ \tilde{L}_{n,W}^\mp/K$ or $L_{n,W}^\pm/K$. If $(K+E_1)/K = L_{n,W}^\pm/K$, then $E_1 \subset  L_{n,W}^\pm$ which is a contradiction, as we have already seen. Thus, $E_1 \subset \tilde{L}_{n,W}^\mp$, which shows that $W$ is a scheme over $Y$.\ By the universal property of the Grassmannian bundle $B: = Gr_Y(n-1,\tLn)$, the inclusion $K \hookrightarrow \tilde{L}_{n,W}^\mp$ defines the wanted morphism $g: W \to B$ such that $\pi\circ g = f$.

For the uniqueness, if $g': W \to B $ is a map such that $\pi \circ g' = f$, then $g'^*(\Ln) = L_{n,W}^\pm$ and $g'^*P_{n-1}^\mp \subset  L_{n,W}^\pm$. Note that $g'^*P_{n-1}^\mp\subset E_1^\perp$ on $W$, which forces $g'^*P_{n-1}^\mp\subset K$ and hence to be equal by dimension counting. Thus, $g'^*\tilde{L}_{n}^\mp = \tilde{L}_{n,W}^\mp$ is the unique Lagrangian in the other component containing $K = g'^*P_{n-1}$, which implies that $g = g'$. 

Finally, we see that $U_\pm = B - E(n,E) \cong  \mathbb{P}_Y((\tLn)^\vee) - \mathbb{P}_Y((\tLn/E_1)^\vee) \cong X-Z$. The scheme $ \mathbb{P}_Y((\tLn)^\vee) - \mathbb{P}_Y((\tLn/E_1)^\vee)$ is an affine bundle over $Y$, but need not to be a vector bundle in general.   
\end{proof}

Consider the case of $OG_\pm(2,E)$. Suppose that $(E,\beta)$ admits a complete flag
$$0 \subset E_1 \subset E_2 = E_2^\perp \subset E_1^\perp \subset E.$$
By Remark \ref{rmk:tildeE}, there is a Lagrangian $\tilde{E}_2$ in the other component such that the filtration
  $$0 \subset E_1 \subset \tilde{E}_2 = \tilde{E}_2^\perp \subset E_1^\perp \subset E$$
  is a complete flag. 
\begin{Prop}\label{Prop: OG2}
There are isomorphisms of schemes $OG_+(2,E) \cong \mathbb{P}(\tilde{E}_2^\vee)$ and $OG_-(2,E) \cong \mathbb{P}(E_2^\vee)$.  
\end{Prop}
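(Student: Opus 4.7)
\medskip
\noindent\textbf{Proof plan.} The strategy is to obtain both isomorphisms as a consequence of the Blow-up setup of Section 2.4 specialised to $n=2$, where the centre of the blow-up turns out to be a Cartier divisor already.

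First I would identify the blow-up centre $OG^{E_1}_\pm(2,E)$. By Lemma \ref{lem:embeddingPhi} it is isomorphic to $OG_\pm(1,E^1)$, where $E^1=E_1^\perp/E_1$ is a metabolic bundle of rank two; by the corollary preceding this proposition (the $n=1$ case) this scheme is just $S$. Proposition \ref{Prop: rank two} then tells us that $E^1$ admits exactly the two Lagrangians $E_2/E_1$ and $\tilde{E}_2/E_1$; since $E_2$ and $\tilde{E}_2$ lie in opposite components of $OG(2,E)$ (as $\rk(E_2\cap\tilde{E}_2)=\rk E_1 = 1$), pulling back along $\Phi^{-1}$ identifies the universal bundles as $\tilde{L}^+_2 = E_2$ and $\tilde{L}^-_2 = \tilde{E}_2$. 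A dimension count also gives $\dim_S OG_\pm(2,E)= \binom{2}{2}=1$ while $OG^{E_1}_\pm(2,E) \cong S$, so $\iota_\pm$ is a codimension-one regular closed embedding and its image is an effective Cartier divisor.

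Next I would invoke Theorem \ref{theo:blowup}. Since the blow-up centre is already a Cartier divisor, the blow-up morphism $\pi_\pm:Bl_\pm(2,E)\to OG_\pm(2,E)$ is an isomorphism. On the other hand, by its very definition $Bl_\pm(2,E)= Gr(1,\tilde{L}^\mp_2) \cong \mathbb{P}((\tilde{L}^\mp_2)^\vee)$ over $OG^{E_1}_\mp(2,E)\cong S$. Combining with the identifications of the universal bundles from the previous step, this yields $OG_+(2,E)\cong \mathbb{P}(\tilde{E}_2^\vee)$ and $OG_-(2,E)\cong \mathbb{P}(E_2^\vee)$.

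The only delicate point will be correctly matching the two components, i.e.\;checking that $\tilde{L}^+_2$ corresponds to $E_2$ and not to $\tilde{E}_2$ (and vice versa). This reduces to the component-placement computation above, using the definition of $OG_\pm$ via the parity of $\rk(E_n(x)\cap N(x))$. Everything else is a direct application of the machinery developed in Sections 2.3--2.4.
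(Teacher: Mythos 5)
Your proposal is correct and follows essentially the same route as the paper: identify $\tilde{L}^+_2\cong E_2$ and $\tilde{L}^-_2\cong\tilde{E}_2$ on the centres $OG^{E_1}_\pm(2,E)\cong S$, observe that the centre has codimension one so that $\pi_\pm\colon Bl_\pm(2,E)\to OG_\pm(2,E)$ is an isomorphism, and read off $Bl_\pm(2,E)\cong\mathbb{P}((\tilde{L}^\mp_2)^\vee)$ from the construction of the blow-up. Your extra care in matching the components via the parity of $\rk(E_2\cap\tilde{E}_2)$ is exactly the point the paper leaves implicit, and your conclusion agrees with it.
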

%In particular, this shows that $OG_+(2,E)$ can not be isomorphic to $OG_-(2,E)$ in general. Although two connected components are isomorphic in the split case. 
\begin{proof} Recall that $\iota_\pm:OG_\pm^{E_1}(2,E)  \cong S$.\ Note that the pullback $\tilde{L}_2^+$ of the universal bundle $L_2^+$ on $OG_+(2,E)$ via the morphism $\iota_+:OG_+^{E_1}(2,E) \hookrightarrow OG_+(2,E) $ is isomorphic to $E_2$, and $\tilde{L}_2^-$ is isomorphic to $\tilde{E}_2$. Note that $OG_+^{E_1}(2,E)$ is a  closed subscheme of $OG_+(2,E)$ of codimension one. Therefore, the morphism $\pi_+: Bl_+(2,E) \cong \mathbb{P}(\tilde{E}_2^\vee) \to OG_+(2,E)$ is an isomorphism of schemes, and similarly $OG_-(2,E) \cong \mathbb{P}(E_2^\vee)$. 
\end{proof}
\subsection{Picard groups} In this subsection, we study Picard groups of spinor varieties. 
\begin{Prop}[$\textnormal{Theorem 1.3 \cite{BC}}$]\label{prop:K-CH} Let $H^*$ be a homotopy invariant cohomology theory for regular schemes.\ Assume that $H^*$ is oriented, so that it admits push-forward maps satisfying flat-base change, which are defined for all proper morphisms (examples of such theories are $K$-theory and Chow groups).\ Then, we have the following equivalence
	\[ H^*(OG_\pm(n,E)) \cong
	\bigoplus\limits_{i = 1}^{2^{n-1}} H^*(S)   . \]
\end{Prop}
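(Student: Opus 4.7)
The plan is to prove the statement by induction on $n$, using the Blow-up setup from Theorem \ref{theo:blowup} and invoking the general splitting result \cite[Theorem 1.3]{BC} for oriented cohomology theories in this framework. The base cases are handled directly: for $n=1$, Proposition \ref{Prop: rank two} forces $OG_\pm(1,E) \cong S$; for $n=2$, Proposition \ref{Prop: OG2} identifies $OG_\pm(2,E)$ with a projective bundle of relative dimension one over $S$, and the projective bundle formula yields $H^*(S)^{\oplus 2}$.

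For the inductive step, I would exploit diagram (\ref{eq:keydiagram}) directly. Lemma \ref{lem:embeddingPhi} supplies isomorphisms $OG_\pm^{E_1}(n,E) \cong OG_\pm(n-1,E^1)$ and $OG_\mp^{E_1}(n,E) \cong OG_\mp(n-1,E^1)$, so by the inductive hypothesis both are computed by $H^*(S)^{\oplus 2^{n-2}}$. Since $\alpha_\mp : U_\pm(n,E) \to OG_\mp^{E_1}(n,E)$ is an affine bundle by Theorem \ref{theo:blowup} and $H^*$ is homotopy invariant, one also obtains $H^*(U_\pm(n,E)) \cong H^*(S)^{\oplus 2^{n-2}}$. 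Applying \cite[Theorem 1.3]{BC} to the blow-up diagram (\ref{eq:keydiagram}) then produces a split short exact sequence
\[0 \to H^{*-(n-1)}(OG_\pm^{E_1}(n,E)) \to H^*(OG_\pm(n,E)) \to H^*(U_\pm(n,E)) \to 0,\]
and summing the two flanking terms gives $H^*(S)^{\oplus 2^{n-1}}$, completing the induction.

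The main obstacle lies in verifying the hypotheses of \cite[Theorem 1.3]{BC} in our setting: in particular, one needs that $\iota_\pm$ is a regular closed embedding of codimension $n-1$ and that the restriction map $H^*(Bl_\pm(n,E)) \to H^*(E(n,E))$ is surjective (which is what forces the localization long exact sequence to collapse into short split pieces). The surjectivity follows from the projective bundle formula applied to both $Bl_\pm(n,E) = \mathbb{P}_Y((\tilde{L}_n^\mp)^\vee)$ and $E(n,E) = \mathbb{P}_Y((\tilde{L}_n^\mp/E_1)^\vee)$ over $Y = OG_\mp^{E_1}(n,E)$, since the tautological hyperplane class on the ambient projective bundle restricts to the tautological hyperplane class on the sub-bundle; the codimension is read off from the dimension count $\dim OG_\pm(n,E) - \dim OG_\pm(n-1,E^1) = \binom{n}{2} - \binom{n-1}{2} = n-1$.
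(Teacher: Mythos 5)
Your proposal is correct and follows essentially the same route as the paper, whose proof consists precisely of inductively applying Lemma \ref{lem:embeddingPhi}, homotopy invariance along $\alpha_\mp$, and \cite[Theorem 1.3]{BC} in the Blow-up setup established by Theorem \ref{theo:blowup}; you merely spell out the induction and base cases explicitly. (The only caveat is that \cite[Theorem 1.3]{BC} needs no surjectivity of $H^*(Bl_\pm(n,E)) \to H^*(E(n,E))$ as a hypothesis --- the splitting is built geometrically from $\pi_*$, $\tilde{\alpha}^*_\mp$ and $(\alpha^*_\mp)^{-1}$ once Setup 1.1 of \cite{BC} is verified --- so that part of your ``main obstacle'' is an unnecessary extra check.)
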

\begin{proof}
	First of all let us observe that if $\rk(\EC^{n-1}) =2$, we can make the identification $S = OG_\pm(1,\EC^{n-1})$. Finally, we inductively apply Lemma \ref{lem:embeddingPhi}, homotopy invariance and \cite[Theorem 1.3]{BC}.
\end{proof}

\begin{Prop}\label{lem:lambda}
	$\Pic(OG_\pm(n,E)) \cong
	\mathbb{Z} \oplus \Pic(S)  $
	and $\Pic(Bl_\pm(n,E)) \cong \mathbb{Z}[\SO(E(n,E))] \oplus \Pic(OG_\pm(n,E)).$
\end{Prop}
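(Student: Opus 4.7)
The plan addresses the two isomorphisms separately. For the second isomorphism, I would invoke the classical description of the Picard group of a blow-up along a regularly embedded smooth center. By Theorem \ref{theo:blowup}, $\pi_\pm : Bl_\pm(n,E) \to OG_\pm(n,E)$ is the blow-up of a regular noetherian scheme along $\iota_\pm$, which is a regular embedding since both source and target are smooth over $S$, and the exceptional divisor is $E(n,E)$. The standard blow-up formula for Picard groups then yields the split short exact sequence
\[
0 \to \pi_\pm^{*}\Pic(OG_\pm(n,E)) \to \Pic(Bl_\pm(n,E)) \to \mathbb{Z}[\SO(E(n,E))] \to 0,
\]
giving the desired decomposition, with the splitting furnished by the class $\SO(E(n,E))$.

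For the first isomorphism I would proceed by induction on $n\geq 2$. The base case $n=2$ is essentially Proposition \ref{Prop: OG2}, which realises $OG_\pm(2,E)$ as a $\PS^1$-bundle over $S$, so the projective bundle formula gives $\Pic(OG_\pm(2,E)) \cong \mathbb{Z}\oplus\Pic(S)$. For the inductive step with $n\geq 3$, I would first record the dimension count
\[
\dim_S OG_\pm(n,E) - \dim_S OG_\pm^{E_1}(n,E) \;=\; \binom{n}{2} - \binom{n-1}{2} \;=\; n-1 \;\geq\; 2.
\]
Since $OG_\pm(n,E)$ is regular, removing a closed subscheme of codimension at least two leaves the divisor class group unchanged, so restriction to the open complement yields an isomorphism $\Pic(OG_\pm(n,E)) \xrightarrow{\sim} \Pic(U_\pm(n,E))$. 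By Theorem \ref{theo:blowup}, $\alpha_\mp : U_\pm(n,E) \to OG_\mp^{E_1}(n,E)$ is an affine bundle, so homotopy invariance of $\Pic$ over a regular noetherian base gives $\Pic(U_\pm(n,E)) \cong \Pic(OG_\mp^{E_1}(n,E))$. Lemma \ref{lem:embeddingPhi} then identifies the target with $\Pic(OG_\mp(n-1, E^1))$, and the induction hypothesis (applied over the base $S$, since $E^1$ also carries a complete flag) closes the argument.

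The main technical point requiring care is the homotopy invariance step, since $\alpha_\mp$ is a priori only a torsor under a vector bundle and need not itself be one. However, the explicit description given in the proof of Theorem \ref{theo:blowup} presents $U_\pm(n,E)$ as $\PS_Y((\tLn)^\vee) \setminus \PS_Y((\tLn/E_1)^\vee)$, which is Zariski-locally on $Y = OG_\mp^{E_1}(n,E)$ isomorphic to $\PS^{n-1}\setminus\PS^{n-2} \cong \A^{n-1}$. The classical identity $\Pic(\A^{k}_Y) \cong \Pic(Y)$ for regular noetherian $Y$ thus globalises by a straightforward Zariski gluing, confirming the required isomorphism on Picard groups and completing the plan.
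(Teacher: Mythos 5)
Your proposal is correct, but it reaches the first isomorphism by a different mechanism than the paper. The paper identifies $\Pic=CH^1$ on regular schemes and then quotes the oriented-cohomology computation (Proposition \ref{prop:K-CH}, i.e.\ \cite[Theorem 1.3]{BC}) together with a chain of restriction isomorphisms along the closed subvarieties $OG_+^{E_j}(n,E)$, ending at $OG_+(2,E^{n-2})\cong\mathbb{P}((\tilde{E}_2^{n-2})^\vee)$; the second formula is simply cited from \cite[Proposition A.6 (i)]{BC}. You instead run an induction through the open side of the blow-up setup: codimension $\geq 2$ removal gives $\Pic(OG_\pm(n,E))\cong\Pic(U_\pm(n,E))$, then homotopy invariance for the affine bundle $\alpha_\mp$ and Lemma \ref{lem:embeddingPhi} pass to $OG_\mp(n-1,E^1)$, with Proposition \ref{Prop: OG2} as base case. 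Both arguments reduce to the same rank-two projective-bundle case; yours is more self-contained (it does not need Proposition \ref{prop:K-CH}), while the paper's is shorter because it leans on the additive structure of $CH^*$ already established for the Blow-up setup.

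Two small points need tightening. First, the homotopy-invariance step is not really a matter of ``Zariski gluing'': Picard groups do not glue naively over an open cover. The correct justification is the standard divisor-class argument for a Zariski-locally trivial $\A^{k}$-bundle $p\colon U\to Y$ over a regular (hence locally factorial) base: a prime divisor of $U$ not dominating $Y$ is the full preimage of a divisor of $Y$, a dominating one becomes principal on the generic fibre $\A^k_{K(Y)}$ since $\mathrm{Cl}(\A^k_{K(Y)})=0$, and injectivity uses $\SO^*(\A^k_{K(Y)})=K(Y)^*$; equivalently, invoke homotopy invariance of $CH^1$ for affine bundles. Second, the blow-up formula for $\Pic$ requires the centre to have codimension at least two, which holds here exactly when $n\geq 3$ (for $n=2$ the map $\pi_\pm$ is an isomorphism and the extra $\mathbb{Z}[\SO(E(n,E))]$ summand would overcount); this caveat is shared by the paper's statement, but you should record it when invoking the classical formula.
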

\begin{proof}
	Let us recall that for $X$ regular, one can identify $\Pic(X) = CH^1(X)$. The first formula is then obtained by Proposition \ref{prop:K-CH}. Note that we have the following isomorphisms induced by pullbacks
	$$CH^1(OG_+(n,E))\cong CH^1(OG_+^{E_1}(n,E)) \cong \cdots \cong CH^1(OG_+^{E_{n-2}}(n,E)) \cong CH^1(OG_+(2, E^{n-2})) $$
The last group is isomorphic to $CH^1(\mathbb{P}(\tilde{E}^{n-2}_2)^\vee)$, which is itself equal to $\mathbb{Z} \oplus \Pic(S) $. The second formula is obtained by \cite[Proposition A.6 (i)]{BC}. A similar computation applies to $OG_-(n,E)$.  
\end{proof}
\begin{remark}
Note that in the isomorphism $\Pic(OG_+(n,E)) \cong \mathbb{Z} \oplus \Pic(S)  $, the group $\mathbb{Z}$ is generated by $\SO(1)$ coming from $OG_+(2,E^{n-2}) \cong \mathbb{P}((\tilde{E}^{n-2}_2)^\vee)$ which is known as the \textit{Pfaffian line bundle}, and $\Pic(S)$ comes from pullback from $S$. 
\end{remark}
\subsection{The canonical bundle}
\begin{lem}\label{lem:determinanttautologicalbundle}
$\det L_n^+ \cong \det \tilde{E}_n \otimes \SO(-2)$. 
\end{lem}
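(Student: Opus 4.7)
The plan is to reduce to the $n=2$ base case via the chain of closed embeddings $OG_+^{E_{n-2}}(n,E) \hookrightarrow \cdots \hookrightarrow OG_+^{E_1}(n,E) \hookrightarrow OG_+(n,E)$. Iterating Lemma \ref{lem:embeddingPhi} together with the argument of Proposition \ref{lem:lambda} shows that pullback along this chain induces an isomorphism on Picard groups, so it suffices to verify the formula after restricting to $OG_+^{E_{n-2}}(n,E) \cong OG_+(2, E^{n-2})$. Under the isomorphism $\Phi$ of Lemma \ref{lem:embeddingPhi}, the restriction of the universal Lagrangian sits in a short exact sequence
\[0 \to E_{n-2} \to L_n^+|_{OG_+^{E_{n-2}}(n,E)} \to \Phi^* L_2^+ \to 0,\]
so $\det L_n^+|_{OG_+^{E_{n-2}}(n,E)} = \det E_{n-2} \otimes \Phi^* \det L_2^+$, reducing the problem to computing $\det L_2^+$ on the rank-4 spinor variety $OG_+(2, E^{n-2})$.

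For the $n=2$ base case, apply Proposition \ref{Prop: OG2}, which gives $OG_+(2, F) \cong Bl_+(2, F) = \mathbb{P}(\tilde F_2^\vee)$ for any rank-4 bilinear space $F$ with complete flag. On this projective bundle, the tautological line subbundle $P_1^- \hookrightarrow \pi^*\tilde F_2$ is $\SO(-1)$, and the blow-up construction of $L_2^+$ (which is an isomorphism for $n=2$) produces a short exact sequence
\[0 \to P_1^- \to L_2^+ \to T^+ \to 0,\]
where $T^+$ is the second Lagrangian of the rank-2 metabolic sub-quotient $(P_1^-)^\perp/P_1^-$. Proposition \ref{Prop: rank two} identifies $T^+ \cong (\tilde F_2/P_1^-)^\vee$, and using $\det(\tilde F_2/P_1^-) = \det \tilde F_2 \otimes (P_1^-)^{-1}$ yields
\[\det L_2^+ = P_1^- \otimes T^+ = (P_1^-)^{\otimes 2} \otimes (\det \tilde F_2)^\vee = \SO(-2) \otimes (\det \tilde F_2)^\vee.\]

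Combining: with $F = E^{n-2}$ and the identification $\tilde F_2 = \tilde E_n/E_{n-2}$ traced from the inductive construction of $\tilde E_\bullet$ in Remark \ref{rmk:tildeE}, the pullback of $\det L_n^+$ becomes an explicit expression in $\SO(-2)$ and the determinants of the flag pieces $E_{n-2}, E_{n-1}, E_n, \tilde E_n$. The final reorganization into $\det \tilde E_n \otimes \SO(-2)$ relies on the structural identity $\det E_n \otimes \det \tilde E_n = (\det E_{n-1})^{\otimes 2}$, obtained by applying Proposition \ref{Prop: rank two} inside the rank-4 metabolic quotient $E^{n-2}$ together with the fact that $\det E \cong \SO_S$ whenever $E$ admits a Lagrangian. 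The main obstacle is keeping the duals and the $\pm$ labels consistent throughout: both the identification $T^+ \cong (\tilde F_2/P_1^-)^\vee$ and the identification of $\tilde F_2$ with $\tilde E_n/E_{n-2}$ (rather than with $E_n/E_{n-2}$) must be oriented correctly, since a sign error in either place cascades to an incorrect twist in the final line bundle.
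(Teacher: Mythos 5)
Your reduction to the base case is essentially the paper's: the paper also restricts along $OG_+^{E_{n-2}}(n,E)\hookrightarrow OG_+(n,E)$ and uses the sequences $0\to E_{n-2}\to L_n^+\to L_n^+/E_{n-2}\to 0$, with the Picard-restriction argument implicit in Proposition \ref{lem:lambda}. Your base case, however, takes a different route (the blow-up description $OG_+(2,F)\cong Bl_+(2,F)$ and the sequence $0\to P_1^-\to L_2^+\to T^+\to 0$ together with Proposition \ref{Prop: rank two}), whereas the paper uses $\det L_2^+\cong T^\vee_{OG_+(2,E)/S}$ from Lemma \ref{lem:tangentOGn} and the relative Euler sequence on $\mathbb{P}(\tilde{E}_2^\vee)$. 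That alternative route is viable, but as executed it contains a genuine error in exactly the place you flagged as delicate: the identification $P_1^-\cong\SO(-1)$. In the convention the paper uses (forced by the Euler sequence $0\to\SO\to\tilde{E}_2^\vee\otimes\SO(1)\to T\to 0$ in Lemma \ref{lem:determinanttautologicalbundle} and by the tautological sequence $0\to\SO(-1)\to q^*\mathcal{E}\to\mathcal{Q}\to 0$ in the appendix), $\SO(-1)$ on $Gr(1,\tilde{F}_2)\cong\mathbb{P}(\tilde{F}_2^\vee)$ is the annihilator line $(\tilde{F}_2/P_1^-)^\vee$, i.e. $\SO(-1)\cong T^+$ and $P_1^-\cong\det\tilde{F}_2\otimes\SO(-1)$, not $P_1^-\cong\SO(-1)$.

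The consequence is not cosmetic. Your displayed base-case output $\det L_2^+\cong\SO(-2)\otimes(\det\tilde{F}_2)^\vee$ already contradicts the lemma at $n=2$ unless $(\det\tilde{E}_2)^{\otimes 2}$ is trivial, and in general it pulls back to $(\det E_{n-2})^{\otimes 2}\otimes(\det\tilde{E}_n)^\vee\otimes\SO(-2)$, which differs from the asserted $\det\tilde{E}_n\otimes\SO(-2)$ by $(\det\tilde{E}_n\otimes\det E_{n-2}^\vee)^{\otimes 2}$. The ``structural identity'' $\det E_n\otimes\det\tilde{E}_n\cong(\det E_{n-1})^{\otimes 2}$ you invoke is true (it follows from Proposition \ref{Prop: rank two} applied to $E^{n-1}$), but it cannot repair this discrepancy: for instance take $E=E_n\oplus E_n^\vee$ hyperbolic with $E_n=\SO_S^{n-1}\oplus M$, $M$ a line bundle that is not $2$-torsion, and the coordinate flag with $E_{n-1}=\SO_S^{n-1}$; then $\det\tilde{E}_n\cong M^\vee$ and $\det E_{n-2}\cong\SO_S$, so the discrepancy $M^{-2}$ is nontrivial, while both sides of your identity are consistent with it. With the corrected identification no reorganization is needed at all: $\det L_2^+\cong P_1^-\otimes T^+\cong(\det\tilde{F}_2\otimes\SO(-1))\otimes\SO(-1)\cong\det\tilde{F}_2\otimes\SO(-2)$, hence $\det L_n^+$ restricts to $\det E_{n-2}\otimes\det(\tilde{E}_n/E_{n-2})\otimes\SO(-2)\cong\det\tilde{E}_n\otimes\SO(-2)$, and your Picard-restriction step then finishes the proof. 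So the strategy is fine, but the orientation of the tautological bundle must be fixed, and the final step as you describe it would not close the argument.
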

\begin{proof}
Consider the case when $n=2$. By Lemma \ref{lem:tangentOGn}, we see that $\det L_2^+ \cong T_{OG_+(2,E)/S}^\vee$, where the last bundle is isomorphic to $T_{\mathbb{P}(\tilde{E}_2^\vee)/S}^\vee$. Now, the exact sequence (cf. \cite[Appendix B5.8]{Fu})
$$\xymatrix{0 \ar[r] & \SO_{\mathbb{P}(\tilde{E}_2^\vee)/S} \ar[r] & \tilde{E}_2^\vee \otimes \SO(1) \ar[r] & T_{\mathbb{P}(\tilde{E}_2^\vee)/S} \ar[r] & 0} $$ 
shows that $\det T_{\mathbb{P}(\tilde{E}_2^\vee)/S}^\vee \cong \det ( \tilde{E}_2^\vee \otimes \SO(1) )^\vee \cong \SO(-2) \otimes  \det  \tilde{E}_2    $. 

 The general case can be reduced to  $n=2$. There are exact sequences
$0\to E_{n-2} \to L_n^+ \to L_n^+/E_{n-2} \to 0$ and $0\to E_{n-2} \to \tilde{E}_n \to \tilde{E}_n/E_{n-2} \to 0$ on $OG_+^{E_{n-2}}(n,E)$. Here $L_n^+/E_{n-2}$ can be identified with the universal bundle on $OG_+(2, E^{n-2})$.
%Next, we claim that $ \det  \tilde{E}_2^\vee \cong \det E_2$. There are two exact sequences $\xymatrix{0 \to E_1 \to E_2 \to E_2/E_1 \to 0} $ and $\xymatrix{0 \to E_1 \to \tilde{E}_2 \to \tilde{E}_2/E_1 \to 0} $, and $\tilde{E}_2/E_1 \cong (E_2/E_1)^\vee$ by Proposition \ref{Prop: rank two}. Hence, $\det E_2 \cong E_1 \otimes (E_2/E_1)$ and $\det \tilde{E}_2 \cong E_1 \otimes (E_2/E_1)^\vee $. It follows that $ \det \tilde{E}_2^\vee \cong \det E_2 \otimes (E_1^\vee)^{\otimes 2} $. 
Thus, we conclude that $\det L_n^+ \cong \det \tilde{E}_n \otimes \SO(-2)$.
\end{proof}

Let $N_{\iota_\pm}$ be the normal bundle of the closed embedding $\iota_\pm: OG_\pm^{\EC_1}(n,E) \hookrightarrow OG_\pm(n,E)$. We want to compute the class $\omega_{\iota_\pm}: = \det N_{\iota_\pm}^\vee$ in $\Pic(OG_\pm^{E_1}(n,E))$ as it is important for Witt groups.
\begin{Prop}\label{prop:canonicalbundle}
$\omega_{\iota_+} \cong \SO(-2)\otimes\det \tilde{E}_n \otimes E_1^{\otimes (n-2)}$.
\end{Prop}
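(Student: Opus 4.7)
The plan is to compute $\omega_{\iota_+}$ via the normal bundle exact sequence
\[ 0 \to T_{OG_+^{E_1}(n,E)/S} \to \iota_+^*\, T_{OG_+(n,E)/S} \to N_{\iota_+} \to 0, \]
using Lemma \ref{lem:tangentOGn} to describe both tangent bundles as second exterior powers of duals of universal subbundles, and then invoking Lemma \ref{lem:determinanttautologicalbundle} to rewrite $\det L_n^+$ in the required form.

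First I would fix notation on $OG_+^{E_1}(n,E)$. The universal subbundle $L_n^+$ contains $E_1$ tautologically there, so we get a short exact sequence
\[ 0 \to E_1 \to L_n^+ \to L^1 \to 0, \qquad L^1 := L_n^+/E_1, \]
of locally free sheaves, where $L^1$ has rank $n-1$. Under the isomorphism $\Phi: OG_+^{E_1}(n,E) \xrightarrow{\cong} OG_+(n-1,E^1)$ of Lemma \ref{lem:embeddingPhi}, the bundle $L^1$ is identified with the universal subbundle on $OG_+(n-1,E^1)$. Applying Lemma \ref{lem:tangentOGn} to $OG_+(n-1,E^1)$ therefore gives $T_{OG_+^{E_1}(n,E)/S} \cong \wedge^2 (L^1)^\vee$, while $\iota_+^*\,T_{OG_+(n,E)/S} \cong \wedge^2 (L_n^+)^\vee$.

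Next I would dualise the displayed sequence to obtain $0 \to (L^1)^\vee \to (L_n^+)^\vee \to E_1^\vee \to 0$ and exploit the induced two-step filtration on $\wedge^2(L_n^+)^\vee$. Since $\wedge^2 E_1^\vee = 0$ (as $E_1$ has rank one), this filtration collapses to a short exact sequence
\[ 0 \to \wedge^2 (L^1)^\vee \to \wedge^2 (L_n^+)^\vee \to (L^1)^\vee \otimes E_1^\vee \to 0. \]
The subbundle here is precisely the image of $T_{OG_+^{E_1}(n,E)/S}$ inside $\iota_+^*\, T_{OG_+(n,E)/S}$, as can be checked via the second fundamental form description of both tangent bundles (this functorial compatibility is the step where one has to be slightly careful, and it is the main technical point of the argument). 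Comparing with the normal bundle sequence identifies
\[ N_{\iota_+} \cong (L^1)^\vee \otimes E_1^\vee. \]

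Finally I would compute determinants. Since $L^1$ has rank $n-1$, one has $\det N_{\iota_+} = (\det L^1)^{-1} \otimes E_1^{-(n-1)}$, hence $\omega_{\iota_+} = \det N_{\iota_+}^\vee = \det L^1 \otimes E_1^{\otimes(n-1)}$. From $0 \to E_1 \to L_n^+ \to L^1 \to 0$ we get $\det L^1 = \det L_n^+ \otimes E_1^{-1}$, and Lemma \ref{lem:determinanttautologicalbundle} then yields $\det L^1 \cong \det \tilde{E}_n \otimes \SO(-2)\otimes E_1^{-1}$. Substituting gives $\omega_{\iota_+} \cong \SO(-2)\otimes \det \tilde{E}_n \otimes E_1^{\otimes(n-2)}$, as claimed.
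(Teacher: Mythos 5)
Your proposal is correct and follows essentially the same route as the paper: the normal bundle sequence, the identification of both tangent bundles as $\wedge^2$ of duals of universal subbundles via Lemmas \ref{lem:tangentOGn} and \ref{lem:embeddingPhi}, the exact sequence $0 \to \wedge^2 (L^1)^\vee \to \wedge^2 (L_n^+)^\vee \to (L^1)^\vee \otimes E_1^\vee \to 0$, and finally Lemma \ref{lem:determinanttautologicalbundle}. The only (minor) difference is that the paper never needs your ``main technical point'': it compares determinants of the two exact sequences rather than identifying $N_{\iota_+}$ itself with $(L^1)^\vee \otimes E_1^\vee$, so the compatibility of the tangent inclusion with the exterior-power inclusion is sidestepped, and since your final step also only uses determinants you could drop that claim as well.
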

\begin{proof}
On the one hand, the exact sequence
$$\xymatrix{0 \ar[r] & T_{OG^{E_1}_+(n,E)/S} \ar[r] &  \iota_+^* T_{OG_+(n,E)/S} \ar[r] & N_{\iota_+} \ar[r] & 0 }$$
shows that
$$ \det  N_{\iota_+} \cong  \det \iota_+^* T_{OG_+(n,E)/S} \otimes \det  T_{OG^{E_1}_+(n,E)/S}^\vee\,. $$
Let $U$ be the cokernel of the embedding $ E_1 \to L_n^+ $ on $OG^{E_1}_+(n,E)$. Since $OG^{E_1}_+(n,E) \cong OG_+(n-1,E^1)$, the bundle $U$ can be identified with the universal bundle on $OG_+(n-1,E^1)$.\ The tangent bundle of $OG^{E_1}_+(n,E)$ is isomorphic to $\Lambda^2 U^\vee$. Therefore, $\det  N_{\iota_+} \cong \det \Lambda^2 (L_n^+)^\vee \otimes \det  \Lambda^2 U $. On the other hand, note that the exact sequence $0 \to U^\vee \to (L_n^+)^\vee \to E_1^\vee \to 0 $ on $OG^{E_1}_+(n,E)$ induces the following exact sequence 
$$\xymatrix{ 0 \ar[r] &\Lambda^2 U^\vee \ar[r] & \Lambda^2 (L_n^+)^\vee \ar[r] & U^\vee \otimes E_1^\vee \ar[r] & 0\,, }$$
which shows that
$$\det  N_{\iota_+} \cong \det \Lambda^2 (L_n^+)^\vee \otimes \det  \Lambda^2 U \cong \det (U^\vee \otimes E_1^\vee) \cong \det U^\vee \otimes (E_1^\vee)^{\otimes(n-1)}  \cong \det (L_n^+)^\vee \otimes (E_1^\vee)^{\otimes (n-2)} . $$
Now,  by Lemma \ref{lem:determinanttautologicalbundle}, we see that $$\det  N_{\iota_+}^\vee \cong \det (L_n^+) \otimes (E_1^\vee)^{\otimes (n-2)} \cong \SO(-2)\otimes\det \tilde{E}_n \otimes E_1^{\otimes (n-2)}.\qedhere$$
\end{proof}
\vspace{0.2 cm}
\begin{remark} A similar argument shows that
 $\omega_{\iota_-} \cong \SO(-2)\otimes\det E_n \otimes E_1^{\otimes (n-2)}.$
	%Assume that $\EC$ is split hyperbolic.
	%Note that the closed embedding $\iota: OG_+^{\EC_1}(n,E) \hookrightarrow OG_+(n,E)$ is a regular embedding of codimension of $(n-1)$.
	%Define $\omega_{\iota}$ to be the dual of the determinant of the normal bundle of $\iota$. This is not isomorphic to the generator $\mathcal{O}(1)$ in the Picard group of $OG_+^{\EC_1}(n,\EC)$, instead $\omega_{\iota}$ is trivial in $\Pic(OG_+^{\EC_1}(n,\EC))/2$. 
	%Recall the number $\lambda(\LC)$ in \cite{BC}.  Note that the number  $\lambda(\mathcal{O}) = 0 \in \Pic(Bl)$.
\end{remark}

\section{Witt groups of maximal isotropic Grassmannian}
\subsection{Some preliminary results}
The following theorems are useful for our computations.
\begin{theo}[Localization]\label{thm: Balmer}
	Let $X$ be a scheme with $\frac{1}{2} \in \SO_X$. Let $Z$ be a closed subset and $U$ be its open complement. Let $\LC$ be a line bundle on $X$. Then, there is a $12$-term periodic long exact sequence
	\[ \cdots \to W^{i-1}(U,\LC_U) \xrightarrow{\partial} W^i_Z(X,\LC) \to W^i(X,\LC) \to W^i(U,\LC_U) \xrightarrow{\partial} W^{i+1}_Z(X,\LC) \to \cdots \,. \]
\end{theo}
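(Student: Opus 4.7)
My plan is to treat this theorem as a citation of Balmer's foundational machinery rather than proving it from scratch, since it is the standard localization long exact sequence for Witt groups over regular schemes. Let me sketch the mechanism behind it.

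The first step is to replace the scheme-theoretic data by triangulated categories with duality: associate to $X$ the bounded derived category of locally free coherent sheaves (or equivalently, since $X$ is regular, the bounded derived category of coherent sheaves) equipped with the derived dual twisted by $\LC$, and similarly form $D^b_Z(X)$, the triangulated subcategory of complexes with cohomology supported on $Z$, together with the category $D^b(U)$. The hypothesis $\frac{1}{2} \in \SO_X$ is exactly what is needed to make these into triangulated categories with duality in the sense of Balmer. The inclusion and restriction functors fit into a Verdier localization sequence
\[ D^b_Z(X) \longrightarrow D^b(X) \longrightarrow D^b(U)\,, \]
which by regularity identifies the Witt groups of $D^b_Z(X)$ with $W^i_Z(X,\LC)$ and the Witt groups of $D^b(X)$ (resp.\ $D^b(U)$) with $W^i(X,\LC)$ (resp.\ $W^i(U,\LC_U)$).

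Next I would invoke Balmer's main localization theorem for triangulated categories with duality, which attaches a four-periodic long exact sequence of Witt groups to any such Verdier localization sequence. Because the shifted duality has order $4$ on the level of Witt groups (the $W^i$ depend only on $i$ modulo $4$), unrolling this four-periodic sequence produces the $12$-term periodic long exact sequence displayed in the statement. The connecting homomorphism $\partial$ is the one constructed by Balmer using the cone construction in the quotient category.

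No genuine obstacle arises here; the only subtle point is verifying that the categories used do satisfy Balmer's axioms (essentially idempotent completeness and compatibility of the duality with the triangulation), but for regular noetherian schemes with $\frac{1}{2}$ available this is standard. I would therefore conclude the proof simply by citing the localization theorem of \cite{Balmer} (and its extension to twisted coefficients in the same reference), together with the description of Witt groups with support used throughout \cite{BC} and \cite{BC12}.
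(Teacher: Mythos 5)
Your proposal is correct and matches the paper's treatment: the paper does not prove this statement either, but simply cites Balmer's localization theorem (specifically \cite[Theorem 1.5.5]{Balmer}), which is exactly the triangulated-categories-with-duality machinery you sketch. The identification of the three Witt groups via derived categories with supports and the $4$-periodicity giving the $12$-term sequence is the standard argument behind that citation, so there is nothing further to add.
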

For the proof we refer the reader to Balmer \cite[Theorem 1.5.5]{Balmer}.
\begin{theo}[D\'evissage]
	Let $\iota: Z \hookrightarrow X$ be a regular embedding of codimension $d$, where $Z$ and $X$ are both regular separated schemes of finite Krull dimension with $\frac{1}{2}$ in their global sections. Let $\omega_\iota$ be dual of the determinant of the normal bundle $N_\iota$ of the embedding $\iota$. Then, there is an isomorphism
	\[ \iota_* : W^{i-d}(Z, \omega_\iota \otimes \LC_Z) \to W^i_Z(X,\LC). \]
\end{theo}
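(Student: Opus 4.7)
The plan is to establish this as a consequence of an equivalence of triangulated categories with duality, following the Balmer--Gille framework. First I would realize $W^i_Z(X,\LC)$ as the Witt group of the triangulated subcategory $\mathcal{T}_Z \subset D^b(\mathrm{VB}(X))$ of bounded complexes of locally free $\SO_X$-modules whose cohomology is supported on $Z$, equipped with the duality $\mathbb{R}\mathrm{Hom}_X(-,\LC)[i]$. Since $\iota$ is a regular embedding of codimension $d$, the Koszul complex on local generators of the ideal $\mathcal{I}_Z$ gives a finite locally free resolution of $\iota_\ast\SO_Z$, so that the derived pushforward lands in the right place and defines a triangulated functor $\mathbb{R}\iota_\ast : D^b(\mathrm{VB}(Z)) \to \mathcal{T}_Z$.

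The argument then splits into three steps. \textbf{Dévissage.} Any object of $\mathcal{T}_Z$ lies in the triangulated closure of the image of $\mathbb{R}\iota_\ast$; this is proved by filtering a complex $F^\bullet \in \mathcal{T}_Z$ by the powers $\mathcal{I}_Z^n F^\bullet$ and observing that each successive quotient is annihilated by $\mathcal{I}_Z$, hence is the pushforward of a complex of quasi-coherent sheaves on $Z$, which can then be resolved by vector bundles. \textbf{Full faithfulness.} Combining the projection formula with the self-intersection identity $\mathrm{L}\iota^\ast\iota_\ast\SO_Z \simeq \Lambda^\bullet N_\iota^\vee$ (coming from the Koszul resolution) shows that $\mathbb{R}\iota_\ast$ is fully faithful on bounded complexes of vector bundles. \textbf{Duality transport.} Grothendieck--Serre duality for the regular embedding $\iota$ yields the key isomorphism
\[ \iota^!\LC \;\simeq\; \omega_\iota \otimes \LC_Z \,[-d], \]
from which it follows that $\mathbb{R}\iota_\ast$ intertwines the duality $\mathbb{R}\mathrm{Hom}_Z(-,\, \omega_\iota \otimes \LC_Z)[i-d]$ on $D^b(\mathrm{VB}(Z))$ with the duality $\mathbb{R}\mathrm{Hom}_X(-,\LC)[i]$ on $\mathcal{T}_Z$.

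Putting the three steps together produces a duality-preserving equivalence of triangulated categories, and the induced map on Witt groups (Balmer's functoriality) provides the asserted isomorphism, with the shift by $d$ in the Witt index coming directly from the shift in the dualizing complex. The main technical obstacle is step three: carefully tracking both the shift $[-d]$ and the line bundle twist $\omega_\iota$ that arise from the Koszul resolution, and in particular verifying that Balmer's sign conventions for symmetric forms are respected under the shifted duality. Once this bookkeeping is done, the result follows; in practice, one typically invokes the theorem in the form proved by Gille or in Balmer's treatment of coherent Witt groups.
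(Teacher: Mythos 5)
The theorem you are asked to prove is not proved in the paper at all: the authors quote it from Gille's general d\'evissage theorem \cite{Gille} (see also \cite[Corollary 6.2]{Xie20}). Your duality bookkeeping is the correct one --- Grothendieck duality for the regular embedding gives $\iota^!\mathcal{L}\simeq \omega_\iota\otimes\mathcal{L}_Z[-d]$, which is exactly what makes $\iota_*$ a duality-preserving functor and produces the shift by $d$ and the twist by $\omega_\iota$ in the statement. The problem is your second step. The claim that $\mathbb{R}\iota_*$ is fully faithful, and hence that one gets a duality-preserving \emph{equivalence} between $D^b(\mathrm{VB}(Z))$ and the category of complexes on $X$ with homology supported on $Z$, is false. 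By adjunction, $\mathrm{Hom}_X(\iota_*A,\iota_*B[k])\cong\mathrm{Hom}_Z(\mathrm{L}\iota^*\iota_*A,B[k])$, and the Koszul computation $\mathrm{L}\iota^*\iota_*\mathcal{O}_Z\simeq\Lambda^\bullet N_\iota^\vee$ that you invoke in its favour in fact quantifies the failure of full faithfulness: the exterior powers $\Lambda^pN_\iota^\vee$ for $p>0$ contribute extra Hom groups. Concretely, for $X=\mathbb{A}^1_k$ and $Z$ the origin one has $\mathrm{Ext}^1_X(\iota_*k,\iota_*k)\cong k\neq 0=\mathrm{Ext}^1_Z(k,k)$, so the two triangulated categories are not equivalent (exactly as in K-theory, where Quillen's d\'evissage is an isomorphism of K-groups that is \emph{not} induced by an equivalence of derived categories).

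Consequently the strategy ``generation $+$ full faithfulness $+$ duality transport $\Rightarrow$ equivalence $\Rightarrow$ isomorphism of Witt groups'' cannot be carried out; only the generation statement and the duality transport survive, and they give you a well-defined duality-preserving functor $\iota_*$ and hence the map in the statement, but not its bijectivity. The actual proofs work at the level of Witt groups rather than categories: Gille's general d\'evissage (and, in the classical local situation, the Balmer--Walter local d\'evissage underlying the Gersten--Witt complex) proceeds by filtering by codimension/length of support, reducing to the case of modules killed by the ideal via sub-Lagrangian reduction and Koszul-complex computations, and proving directly that $\iota_*$ is an isomorphism on Witt groups. So the correct conclusion of your write-up is its final sentence --- one invokes Gille, which is precisely what the paper does --- while the categorical argument you propose in its place has a genuine gap at the full-faithfulness step.
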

The above theorem is proved by Gille in \cite{Gille} (see also \cite[Corollary 6.2]{Xie20}).\ Combining the two theorems together, one obtains the following corollary.
\begin{Coro}
	Let $\iota: Z \hookrightarrow X$ be a regular embedding of codimension $d$, where $Z$ and $X$ are both regular separated schemes of finite Krull dimension with $\frac{1}{2}$ in their global sections. Let $\omega_\iota$ be dual of the determinant of the normal bundle $N_\iota$ of the embedding $\iota$. Let $U: = X -Z$. Then, one has the following $12$-term periodic long exact sequence
	\[ \cdots \to W^{i-1}(U,\LC_U) \xrightarrow{\partial} W^{i-d}(Z,\omega_\iota \otimes \LC_Z) \xrightarrow{\iota_*} W^i(X,\LC) \to W^i(U,\LC_U) \xrightarrow{\partial} W^{i-d+1}(Z,\omega_\iota \otimes \LC_Z) \to \cdots . \]
\end{Coro}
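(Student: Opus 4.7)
The plan is a direct splicing of the two preceding theorems, with essentially no extra input. I would start by writing out the $12$-term periodic long exact sequence provided by Theorem \ref{thm: Balmer} (Localization) applied to the pair $(X,Z)$ with open complement $U = X - Z$ and the line bundle $\LC$ on $X$:
\[ \cdots \to W^{i-1}(U,\LC_U) \xrightarrow{\partial} W^i_Z(X,\LC) \to W^i(X,\LC) \to W^i(U,\LC_U) \xrightarrow{\partial} W^{i+1}_Z(X,\LC) \to \cdots \,. \]
This is periodic with period $4$ by the periodicity of Witt groups.

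Next, I would invoke the Dévissage Theorem. Since $\iota: Z \hookrightarrow X$ is a regular embedding of codimension $d$ between regular schemes with $\tfrac12$ in their global sections, Dévissage provides, for every $i \in \mathbb{Z}/4\mathbb{Z}$, an isomorphism
\[ \iota_* : W^{i-d}(Z, \omega_\iota \otimes \LC_Z) \xrightarrow{\;\cong\;} W^i_Z(X,\LC). \]
Substituting this isomorphism everywhere the term $W^i_Z(X,\LC)$ appears in the localization sequence produces the desired 12-term periodic long exact sequence
\[ \cdots \to W^{i-1}(U,\LC_U) \xrightarrow{\partial} W^{i-d}(Z,\omega_\iota \otimes \LC_Z) \xrightarrow{\iota_*} W^i(X,\LC) \to W^i(U,\LC_U) \xrightarrow{\partial} W^{i-d+1}(Z,\omega_\iota \otimes \LC_Z) \to \cdots , \]
where the map labelled $\iota_*$ is by definition the composition of the dévissage isomorphism with the forgetful map $W^i_Z(X,\LC) \to W^i(X,\LC)$, and the connecting homomorphism $\partial$ is the Balmer connecting map post-composed with the inverse of the dévissage isomorphism.

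There is essentially no obstacle here; the only mild point to mention is that the substitution respects exactness because $\iota_*$ is an isomorphism, and the relabeling of indices $i \mapsto i-d$ is compatible with the $\mathbb{Z}/4\mathbb{Z}$ periodicity of all terms involved. The resulting sequence is still $12$-term and periodic, with the Witt groups on $Z$ appearing shifted by $d$ in degree and twisted by $\omega_\iota$. No further argument is required.
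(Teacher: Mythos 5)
Your proposal is correct and is exactly the argument the paper intends: splice the localization sequence of Theorem \ref{thm: Balmer} with the d\'evissage isomorphism $\iota_*: W^{i-d}(Z,\omega_\iota\otimes\LC_Z)\xrightarrow{\cong} W^i_Z(X,\LC)$, substituting the latter into the former. The paper gives no further detail, so no comparison is needed.
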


\begin{theo}[Homotopy invariance]
	Let $X$ be a regular scheme with $\frac{1}{2} \in \SO_X$.\ Assume that $p:E \to X$ is an affine bundle and let $\LC$ be a line bundle on $X$. Then the pullback induces an isomorphism
	\[ p^* :  W^i(X,\LC) \xrightarrow{\ \cong\ } W^i(E, p^*\LC). \]
\end{theo}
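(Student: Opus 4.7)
The plan is to prove the theorem by Zariski descent, reducing first to the case of a trivial bundle and then to the classical homotopy invariance for $\A^1_X \to X$.

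Since $p: E \to X$ is an affine bundle and $X$ is noetherian, there exists a finite Zariski open cover $\{U_1, \ldots, U_r\}$ of $X$ such that $p^{-1}(U_i) \cong U_i \times \A^n$ for each $i$. Witt groups satisfy a Mayer--Vietoris long exact sequence for Zariski covers, which follows by splicing together two instances of the localization long exact sequence of Theorem \ref{thm: Balmer}. Applying this to the covers $\{U_i\}$ of $X$ and $\{p^{-1}(U_i)\}$ of $E$, linked vertically by the pullback $p^*$ (and its restrictions), and invoking the five lemma together with induction on $r$, the problem reduces to the case where $E = X \times \A^n$ is trivial over an affine $X$.

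Next, the trivial bundle $X \times \A^n \to X$ factors as a composition of $n$ trivial $\A^1$-bundles, so it suffices to treat $E = \A^1_X$. In this case, the desired isomorphism $p^*: W^i(X,\LC) \xrightarrow{\cong} W^i(\A^1_X, p^*\LC)$ is the classical $\A^1$-homotopy invariance of Witt groups for regular schemes, established by Balmer and Gille (cf.\ \cite{Balmer}), and proved using the localization sequence of Theorem \ref{thm: Balmer} associated to the zero section $X \hookrightarrow \A^1_X$ together with an explicit computation of the connecting homomorphism on $\mathbb{G}_m \times X$.

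The main obstacle in carrying out this outline is verifying that the twist by $\LC$ remains compatible throughout the descent. This is handled by the naturality of the localization sequence (and hence of the derived Mayer--Vietoris sequence) in the line bundle variable: since $p^*\LC$ restricts on each $p^{-1}(U_i)$ to the pullback of $\LC|_{U_i}$, all squares in the Mayer--Vietoris ladder commute, and the five lemma applies at each stage of the induction.
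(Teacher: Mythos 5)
The paper does not actually prove this theorem: it is stated as a known result and the proof is delegated to Balmer \cite{Balmer 01} and Gille \cite{Gille03} (with \cite{KSW20} for the singular case). So there is no internal argument to compare yours against; what you propose is the standard globalization of the classical $\A^1$-invariance, and as an outline it is essentially sound, with three caveats. First, a Mayer--Vietoris sequence does not follow from ``splicing two localization sequences'' alone: you also need excision, i.e.\ $W^i_Z(X,\LC)\cong W^i_Z(U,\LC|_U)$ for $Z$ closed in $X$ and contained in the open $U$, which is part of Balmer's triangulated formalism but must be invoked explicitly; with that, your ladder/five-lemma induction over a trivializing cover, the factorization of $X\times\A^n$ into iterated $\A^1$-bundles over the regular schemes $X\times\A^k$, and the twist bookkeeping for $p^*\LC$ all go through (the reduction to \emph{affine} $X$ is neither needed nor what the induction produces, but this is harmless). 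Second, you should make explicit that ``affine bundle'' is taken to mean Zariski-locally trivial (equivalently, a torsor under a vector bundle); this is the sense in which the paper uses the term (the affine bundles occurring there are complements of corank-one projective subbundles, resp.\ total spaces of line bundles), and your cover $\{U_i\}$ does not exist without it. Third, your parenthetical account of how the base case is proved is not accurate and, read literally, would be circular: one cannot deduce $\A^1$-invariance from the localization sequence of the zero section plus a computation of $W^i(\mathbb{G}_m\times X)$, since computing Witt groups of $\mathbb{G}_m\times X$ already rests on homotopy invariance; Balmer's argument in \cite{Balmer 01} proceeds via Mayer--Vietoris and the affine (polynomial extension) case, and Gille's \cite{Gille03} via coherent Witt groups and d\'evissage-type arguments. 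Since you ultimately cite the classical theorem rather than reprove it, this misattribution does not invalidate your reduction; it only means that your route and the paper's citation rest on the same external input.
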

A proof can be found either in \cite{Balmer 01} or in \cite{Gille03}. For the singular case see instead \cite{KSW20}.
\begin{lem}[Two periodicity on the twists]\label{lem:twoperio}
	Let $\mathcal{L}$ and $\mathcal{M}$ be line bundles on $X$. Then we have an isomorphism 
	$$\mathrm{per}_{\mathcal{M}}: W^i(X, \mathcal{L}) \rightarrow W^i(X, \mathcal{L} \otimes \mathcal{M}^{\otimes 2}),$$
	which, by definition, is the product $(\mathcal{M},\varphi) \otimes - $, where  $\varphi \colon\mathcal{M} \rightarrow \mathcal{H}om_{\SO}(\mathcal{M}, \mathcal{M}^{\otimes 2} )$ is the canonical form in $ W^0(X, \mathcal{M}^{\otimes2} )$.
\end{lem}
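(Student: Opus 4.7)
The plan is to exhibit an explicit two-sided inverse for $\mathrm{per}_{\mathcal{M}}$. Once we unwind the definitions, the desired inverse is essentially multiplication by the analogous canonical form on $\mathcal{M}^\vee$, and the verification reduces to a routine identity among canonical symmetric isomorphisms of line bundles.

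First, I would record the precise product structure on twisted Witt groups: tensor product of symmetric forms induces a pairing
\[
W^i(X,\mathcal{L}) \otimes W^0(X,\mathcal{N}) \longrightarrow W^i(X,\mathcal{L}\otimes \mathcal{N})
\]
for any line bundles $\mathcal{L}$ and $\mathcal{N}$, which is associative up to the canonical associativity isomorphism of tensor products and for which the unit form $(\SO,\mathrm{id})\in W^0(X,\SO)$ acts as the identity. This is standard; see \cite{Balmer}. The map $\mathrm{per}_{\mathcal{M}}$ is, by definition, multiplication by the class $[\mathcal{M},\varphi]\in W^0(X,\mathcal{M}^{\otimes 2})$, where $\varphi$ is the canonical symmetric isomorphism $\mathcal{M}\xrightarrow{\cong}\mathcal{H}om(\mathcal{M},\mathcal{M}^{\otimes 2})$ coming from the identity of $\mathcal{M}^{\otimes 2}$.

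Next, I would introduce the analogous class $[\mathcal{M}^\vee,\varphi^\vee]\in W^0(X,(\mathcal{M}^\vee)^{\otimes 2})$, where $\varphi^\vee\colon \mathcal{M}^\vee\xrightarrow{\cong}\mathcal{H}om(\mathcal{M}^\vee,(\mathcal{M}^\vee)^{\otimes 2})$ is the corresponding canonical form, and define
\[
\mathrm{per}_{\mathcal{M}^\vee}\colon W^i(X,\mathcal{L}\otimes\mathcal{M}^{\otimes 2})\longrightarrow W^i(X,\mathcal{L}\otimes\mathcal{M}^{\otimes 2}\otimes (\mathcal{M}^\vee)^{\otimes 2})\cong W^i(X,\mathcal{L}),
\]
the last identification using the canonical trivialization $\mathcal{M}^{\otimes 2}\otimes(\mathcal{M}^\vee)^{\otimes 2}\xrightarrow{\cong}\SO$. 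I would then claim $\mathrm{per}_{\mathcal{M}^\vee}$ is the desired inverse.

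The heart of the argument is the identity
\[
[\mathcal{M},\varphi]\cdot[\mathcal{M}^\vee,\varphi^\vee] = [\SO,\mathrm{id}] \quad \text{in } W^0(X,\SO),
\]
obtained by tracking the canonical isomorphisms: the tensor product form is carried, via the trivializations $\mathcal{M}\otimes\mathcal{M}^\vee\cong\SO$ and $\mathcal{M}^{\otimes 2}\otimes(\mathcal{M}^\vee)^{\otimes 2}\cong\SO$, to the identity form on $\SO$. Combined with associativity of the product and the fact that $[\SO,\mathrm{id}]$ acts as identity, this gives $\mathrm{per}_{\mathcal{M}^\vee}\circ\mathrm{per}_{\mathcal{M}} = \mathrm{id}$ on $W^i(X,\mathcal{L})$; the reverse composition is symmetric.

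The main (minor) obstacle is bookkeeping of the canonical isomorphisms, in particular checking that no sign or swap appears when identifying $\mathcal{M}\otimes\mathcal{M}^\vee$ with $\SO$ inside the symmetric-form package. This can be done either by a direct local calculation on an affine cover trivializing $\mathcal{M}$, where $\varphi$ becomes multiplication and the identity reduces to $1\cdot 1 = 1$, or by invoking the compatibility of the Witt-theoretic product with canonical isomorphisms of line bundles as recorded in \cite{Balmer}. Either way, the verification is routine, and no deeper input is required.
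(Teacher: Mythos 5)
Your argument is correct and is exactly the standard one: the paper itself gives no details, simply citing \cite{Balmer}, and the proof recorded there amounts to the same observation that multiplication by the rank-one canonical form $(\mathcal{M},\varphi)$ is invertible, with inverse given by the analogous form on $\mathcal{M}^\vee$ via the canonical trivialization $\mathcal{M}\otimes\mathcal{M}^\vee\cong\SO$ (a routine local check, as you note). So your proposal matches the intended proof in substance.
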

See \cite{Balmer} for a proof.

\begin{theo}[Projective bundles]\label{thm:Walter-Nenashev} Let $E$ be a vector bundle over $S$ with odd rank $r$ such that $E$ fits into an exact sequence
$ {0 \to  E' \to  E \to  E'' \to  0. }$ Let $q: \mathbb{P}(E) \to S$ be the projection morphism.\ Suppose that $\rk(E')-1 \equiv \rk(E'')-1 \equiv 0 \mod 2$.\ Then, for any line bundle $L$ over $S$, there are isomorphisms
	$$W^i(\PS(E),q^*L) \cong W^{i-r}(S,L \otimes \det E) \oplus W^{i}(S,L) \quad \textnormal{  and  } \quad  W^i(\PS(E) ,q^*L \otimes \SO(1)) = 0$$
	for any $i \in \mathbb{Z}$.  
\end{theo}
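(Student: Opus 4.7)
The plan is to proceed by induction on the rank $r$ of $E$, with the base case handled by direct computation on small projective bundles (e.g.\ $\PS(E) \cong S$ when $E$ is a line bundle, or the known formulas for $\PS^2_S$). For the inductive step, the filtration $0 \to E' \to E \to E'' \to 0$ yields a closed embedding $\iota\colon \PS(E') \hookrightarrow \PS(E)$ of codimension $\rk(E'')$ whose open complement $U$ fibres as an affine bundle of rank $\rk(E')$ over $\PS(E'')$ (a line in $E$ not contained in $E'$ is sent to its image line in $E'' = E/E'$). Combining Balmer's localization theorem, Gille's dévissage along $\iota$, and homotopy invariance produces a $12$-periodic long exact sequence that interlaces $W^i(\PS(E), q^*\LC)$ with $W^{i-\rk(E'')}(\PS(E'), \omega_\iota \otimes q'^*\LC)$ and $W^i(\PS(E''), q''^*\LC)$, where $\omega_\iota = \det N_\iota^\vee$ can be extracted from the relative Euler sequence and rewritten in terms of $\det E''$ and $\SO(1)$.

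Next I would invoke the inductive hypothesis on both $\PS(E')$ and $\PS(E'')$, whose ambient bundles have strictly smaller rank. The parity hypotheses $\rk(E') \equiv \rk(E'') \equiv 1 \pmod 2$ are precisely what ensures that the inductive formula applies and, more crucially, that the $\SO(1)$-twisted Witt groups vanish on both $\PS(E')$ and $\PS(E'')$. Using the two-periodicity of twists (Lemma \ref{lem:twoperio}) together with the multiplicativity $\det E \cong \det E' \otimes \det E''$, the outer terms of the long exact sequence can be rewritten as copies of $W^*(S)$ carrying exactly the twists required by the statement.

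The main obstacle is verifying that the connecting homomorphism $\partial$ vanishes, so that the long exact sequence collapses into the predicted direct-sum decomposition. The approach I would pursue is to rewrite $\partial$ via the projection formula of Calmès--Hornbostel and then exploit the inductive vanishing of the $\SO(1)$-twisted groups: after the manipulation every class entering $\partial$ lands, modulo two-periodicity, in a twisted Witt group that has already been shown to be zero by induction, forcing $\partial=0$. Once this is established, the resulting short split exact sequence delivers the claimed isomorphism; running the same argument with $\LC$ replaced by $\LC \otimes \SO(1)$ proves $W^i(\PS(E), q^*\LC \otimes \SO(1)) = 0$ simultaneously, because in that twisted localization sequence both flanking terms already vanish by the induction hypothesis, closing the induction for both halves of the statement at once.
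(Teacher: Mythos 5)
The paper itself offers no proof of this statement: it is quoted from Walter \cite{Walter} and Nenashev \cite{Ne}, and the appendix only establishes the two auxiliary results (Theorems \ref{thm:projectivebundleevendimoddtwist} and \ref{thm:projectivebundleodddimeventwist}) that the body of the paper actually manipulates. Measured against the standard argument, your proposal has a genuine gap, and it sits exactly at the load-bearing step. First, bookkeeping: $\rk E'$ and $\rk E''$ both odd force $\rk E$ to be even, so the $r$ in the statement is the (odd) relative dimension of $\PS(E)$, while $\PS(E')$ and $\PS(E'')$ have \emph{even} relative dimension. For bundles of odd rank (even relative dimension) the $\SO(1)$-twisted Witt groups of the projectivization do \emph{not} vanish: by Theorem \ref{thm:projectivebundleevendimoddtwist} (Walter's Theorem 1.2, Nenashev's Theorem 5.1) one has $W^i(\PS(E'),q'^*M\otimes \SO(1))\cong W^{i-(\rk E'-1)}(S,M\otimes\det E')$, which is nonzero already for $E'=\SO_S$, where $\PS(E')=S$. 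So your key claim that "the parity hypotheses ensure that the $\SO(1)$-twisted Witt groups vanish on $\PS(E')$ and $\PS(E'')$" is false, and with it the entire mechanism by which you kill $\partial$ and deduce the twisted vanishing on $\PS(E)$. Moreover, the induction itself is not available: the theorem's hypothesis (a two-step filtration with odd-rank sub and quotient) is not inherited by $E'$ and $E''$, which are arbitrary odd-rank bundles; what one needs for them are the unconditional even-relative-dimension statements just quoted, not an inductive instance of the theorem being proved.

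In fact those twisted groups are precisely the source of the summand $W^{i-r}(S,L\otimes\det E)$. In your localization sequence the d\'evissage term carries the twist $\omega_\iota\otimes q'^*L\cong q'^*(L\otimes\det(E'')^\vee)\otimes\SO(-\rk E'')$, an odd $\SO(1)$-twist on $\PS(E')$ since $\rk E''$ is odd; by Theorem \ref{thm:projectivebundleevendimoddtwist} it is isomorphic (up to squares) to $W^{i-r}(S,L\otimes\det E)$, while the open part over $\PS(E'')$ contributes $W^i(S,L)$. The actual content of the theorem is that the boundary maps of this sequence vanish (and the resulting short exact sequences split) in the untwisted case, and are \emph{isomorphisms} in the $\SO(1)$-twisted case, where both flanking terms are nonzero: an untwisted group over $\PS(E')$ and a twisted group over $\PS(E'')$. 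Nothing in your proposal addresses this; the projection-formula remark does not produce it, and the claimed "both flanking terms already vanish" in the twisted sequence is again false. This is exactly where Walter's Euler-class sequence (Theorem \ref{thm:projectivebundleodddimeventwist}), together with an argument that the filtration into odd-rank pieces kills $e(E)$ in Witt theory, or Nenashev's geometric splitting construction, must enter; some input of this kind is indispensable and cannot be replaced by the proposed inductive vanishing.
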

See Nenashev \cite{Ne} and Walter\cite{Walter}. 
\begin{remark}
	It is worth pointing out that, since for regular schemes Witt groups on vector bundles and those on coherent sheaves are isomorphic, in this paper we do not strictly distinguish between the two. 
\end{remark}

\subsection{Two simple cases} 
Let $\LC$ be a line bundle on $OG_+(n,E)$. By homotopy invariance,  d\'evissage theorem and Proposition  \ref{prop:canonicalbundle}, the 12-term periodic long exact sequence takes the form
\begin{equation}\label{eq:long-witt} \xymatrix{     W^{i-n+1}(OG^{E_1}_+(n,E), \LC \otimes \det \tilde{E}_n \otimes E_1^{\otimes (n-2)})  \ar[r]^-{(\iota_{+})_*} &            W^i(OG_+(n,E), \LC) \ar[r]^-{v_+^*} & W^i(U_+(n,E), \LC) \ar[dll]_{\partial}\\
		  W^{i-n+2}(OG^{E_1}_+(n,E), \LC \otimes \det \tilde{E}_n \otimes E_1^{\otimes (n-2)}) \ar[r] & \cdots   &\phantom{a} \hspace{2.6 cm}.}\end{equation}
Here, we suppressed even twists and we made a choice of a trivialization. A similar exact sequence can be written on $OG_-(n,E)$ replacing $+$ with $-$ and $\tilde{E}_n$ with $E_n$. The following result is proved in \cite{Calmes-Fasel} when the base is a field of characteristic $\neq 2$. 
\begin{Prop}\label{thm:L2d-1} 
	For any line bundle $L$ over $S$, one has $W^i(OG_\pm(n,E), L \otimes \mathcal{O}(1)) = 0$. 
\end{Prop}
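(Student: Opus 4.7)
The plan is to induct on $n \geq 2$, establishing both the $+$ and $-$ statements simultaneously, since they are symmetric under swapping $E_n$ and $\tilde{E}_n$ in the flag. For the base case $n = 2$, Proposition~\ref{Prop: OG2} identifies $OG_\pm(2, E)$ with a $\mathbb{P}^1$-bundle over $S$ on which the Pfaffian $\SO(1)$ is the relative $\SO(1)$, so Walter's vanishing $W^*(\mathbb{P}^1, \SO(1)) = 0$ (extended relatively to $\mathbb{P}^1$-bundles) concludes.

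For the inductive step I apply the localization sequence (\ref{eq:long-witt}) with $\LC = L \otimes \SO(1)$ and aim to show that the two neighbours of $W^i(OG_+(n, E), L \otimes \SO(1))$ vanish. For the left neighbour, Lemma~\ref{lem:embeddingPhi} identifies $OG_+^{E_1}(n, E)$ with $OG_+(n-1, E^1)$, and the twist from Proposition~\ref{prop:canonicalbundle} reduces modulo squares (by Lemma~\ref{lem:twoperio}) to $M \otimes \SO(1)$ with $M$ pulled back from $S$. The essential verification is that $\iota_+^*\SO(1)$ matches the Pfaffian of $OG_+(n-1, E^1)$, which is a direct consequence of the chain of Picard identifications in the proof of Proposition~\ref{lem:lambda}: both Pfaffians descend from the same $\SO(1)$ on $OG_+(2, E^{n-2}) \cong \mathbb{P}((\tilde{E}^{n-2}_2)^\vee)$. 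The inductive hypothesis therefore kills this neighbour.

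For the right neighbour, the affine bundle $\alpha_-: U_+(n, E) \to OG_-^{E_1}(n, E) \cong OG_-(n-1, E^1)$ from Theorem~\ref{theo:blowup} together with homotopy invariance reduces the vanishing to $W^i(OG_-(n-1, E^1), \cdot)$ with twist given by the descent of $v_+^*(L \otimes \SO(1))$. I claim that this descended twist is $M' \otimes \SO(1)$ modulo squares, where $\SO(1)$ denotes the Pfaffian of $OG_-(n-1, E^1)$ and $M'$ is pulled back from $S$. To see this, work on $Bl_+(n, E) = \mathbb{P}((\tLn)^\vee)$: realising the exceptional divisor $E(n, E) = \mathbb{P}((\tLn/E_1)^\vee)$ as the zero locus of the canonical section of $\SO(1)_{\mathrm{rel}} \otimes \tilde{\alpha}_-^* E_1^\vee$ induced by the surjection $(\tLn)^\vee \twoheadrightarrow E_1^\vee$ yields the identity $\SO(E(n, E)) = \SO(1)_{\mathrm{rel}} \otimes \tilde{\alpha}_-^* E_1^\vee$ in $\Pic(Bl_+(n, E))$. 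Comparing this with the two decompositions of $\Pic(Bl_+(n, E))$ furnished by Proposition~\ref{lem:lambda} and by the projective bundle structure over $OG_-^{E_1}(n, E)$ shows that the associated change-of-basis matrix lies in $\mathrm{GL}_2(\mathbb{Z})$ with $(1, 0)^t$ as first column, forcing the coefficient of the Pfaffian $\SO(1)$ on $OG_-(n-1, E^1)$ in $\pi_+^*\SO(1)_{OG_+(n,E)}$ to equal $\pm 1$. Restricting to $U_+$, where $\SO(E(n, E))$ trivialises, produces the claimed descent, and the inductive hypothesis for the $-$ component completes the vanishing.

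Combining both vanishings via exactness of (\ref{eq:long-witt}) yields $W^i(OG_+(n, E), L \otimes \SO(1)) = 0$, and the $-$ case is symmetric. The chief technical point is the Picard identity $\SO(E(n, E)) = \SO(1)_{\mathrm{rel}} \otimes \tilde{\alpha}_-^* E_1^\vee$, since it is the sole bridge that carries the ``Pfaffian'' class across the birational correspondence of the blow-up; once this parity fact is in hand, the remainder of the argument is a routine induction through the localization sequence.
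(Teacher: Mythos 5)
Your proof is correct and follows essentially the same route as the paper: the base case $n=2$ via the projective bundle theorem, then induction through the localization sequence (\ref{eq:long-witt}) using Lemma \ref{lem:embeddingPhi}, Theorem \ref{theo:blowup} and homotopy invariance. The only difference is that you make explicit the bookkeeping of the twist $\SO(1)$ on the closed and open strata (via the identity $\SO(E(n,E)) \cong \SO(1)_{\mathrm{rel}} \otimes \tilde{\alpha}_-^*E_1^\vee$ and the comparison of the two Picard decompositions of $Bl_+(n,E)$), a point the paper's terse argument leaves implicit.
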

\begin{proof}
Recall that both $OG_+(2,E)$ and $OG_-(2,E)$ are projective bundles satisfying the condition of Theorem \ref{thm:Walter-Nenashev}. It follows that $W^i(OG_\pm(2,E), L \otimes \SO(1)) = 0$ for any line bundle $L$ over $S$ and any $i \in \mathbb{Z}$.\ By inductively applying the localization sequence (\ref{eq:long-witt}), the result follows from Theorem \ref{theo:blowup}, homotopy invariance and Lemma \ref{lem:embeddingPhi}. 
\end{proof}

\begin{Prop}\label{thm:evendim-eventwist}
	Let $L$ be a line bundle over $S$ and assume  $n$ to be even. Then, the sequence
	\[ \xymatrix{   0 \ar[r] &    W^{i-(n-1)}(OG^{E_1}_+(n,E), L \otimes \det \tilde{E}_n)  \ar[r]^-{(\iota_{+})_*} &    W^i(OG_+(n,E),L) \ar[r]^-{\alpha^*_+} & W^i(U_+(n,E),L ) \ar[r] &0 }\]
	is split exact. Hence, we have the following isomorphism
	\[ W^i(OG_+(n,E), L) \cong W^{i-(n-1)}(OG^{E_1}_+(n,E),L \otimes \det \tilde{E}_n)  \oplus W^i(OG_-^{E_1}(n,E),L). \]
\end{Prop}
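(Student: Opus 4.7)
The strategy is to construct an explicit section of the restriction map $v_+^* : W^i(OG_+(n,E), L) \to W^i(U_+(n,E), L)$ using the blow-up square (\ref{eq:keydiagram}). Existence of a section forces every instance of $\partial$ in the $4$-periodic localization sequence (\ref{eq:long-witt}) to vanish, which simultaneously yields short exactness and the splitting. Before constructing the section I would exploit the evenness of $n$ twice to eliminate undesired canonical twists. First, since $n-2$ is even, $E_1^{\otimes(n-2)}$ is a square, so Lemma \ref{lem:twoperio} identifies the leftmost term of (\ref{eq:long-witt}) with $W^{i-(n-1)}(OG_+^{E_1}(n,E), L \otimes \det \tilde{E}_n)$, matching the statement. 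Second, homotopy invariance applied to the affine bundle $\alpha_- : U_+(n,E) \to OG_-^{E_1}(n,E)$ from Theorem \ref{theo:blowup} gives an isomorphism $\alpha_-^* : W^i(OG_-^{E_1}(n,E), L) \xrightarrow{\sim} W^i(U_+(n,E), L)$, producing the second summand of the claimed direct sum.

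Next I would exploit that $\pi_+ : Bl_+(n,E) \to OG_+(n,E)$ is a proper morphism between regular schemes, so it carries a Witt-theoretic pushforward. The relative canonical bundle of a blow-up along a regular center of codimension $c = n-1$ is $\omega_{\pi_+} \cong \SO_{Bl_+}((n-2)\,E(n,E))$, and for $n$ even this is a square in $\Pic(Bl_+)$. Invoking Lemma \ref{lem:twoperio} one more time, the pushforward can then be viewed as a map $(\pi_+)_* : W^i(Bl_+(n,E), \pi_+^* L) \to W^i(OG_+(n,E), L)$ with no residual twist, and I would define
\[ \sigma := (\pi_+)_* \circ \tilde{\alpha}_-^* : W^i(OG_-^{E_1}(n,E), L) \to W^i(OG_+(n,E), L), \]
using that $\tilde{\alpha}_-^* L$ and $\pi_+^* L$ coincide on $Bl_+(n,E)$ as pullbacks of $L$ from $S$.

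To verify that $\sigma$ sections $v_+^*$, I would apply flat base change for Witt groups (in the form of the Calm\`es-Hornbostel projection formula) to the Cartesian square formed by $v_+$, $\pi_+$, $\tilde{v}_+$ and the identity on $U_+(n,E)$; this square is Cartesian because $\pi_+$ is an isomorphism over the complement of $E(n,E)$. The base change yields $v_+^* \circ (\pi_+)_* = \tilde{v}_+^*$, and hence $v_+^* \circ \sigma = \tilde{v}_+^* \circ \tilde{\alpha}_-^* = (\tilde{\alpha}_- \circ \tilde{v}_+)^* = \alpha_-^*$, which is precisely the homotopy invariance isomorphism identifying the two realisations of $W^i(U_+(n,E), L)$. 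Consequently $\sigma$ is a section, $v_+^*$ is surjective for every $i$, and by exactness of (\ref{eq:long-witt}) both flanking instances of $\partial$ vanish, forcing the sequence to be short exact.

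The main obstacle I anticipate is the careful bookkeeping of canonical bundles and line bundle twists: checking that $\omega_{\pi_+}$ really is a square in $\Pic(Bl_+(n,E))$ (this is exactly where the hypothesis $n$ even enters) and that the different pullbacks of $L$ to $Bl_+(n,E)$ agree. Once those points are settled, the short exact sequence is injective at $(\iota_+)_*$, surjective at $v_+^*$, and split by $\sigma$, and the decomposition $W^i(OG_+(n,E), L) \cong \operatorname{Im}((\iota_+)_*) \oplus \operatorname{Im}(\sigma)$ translates into the direct sum of the statement via paragraphs one and two. The parity assumption is essential: for $n$ odd, $\omega_{\pi_+}$ ceases to be a square, so the naive pushforward carries a nontrivial twist and one expects a genuine connecting homomorphism, consistent with the introduction's remark that the localization sequence does not split in general.
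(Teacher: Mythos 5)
Your proposal is correct and follows essentially the same route as the paper, which simply invokes \cite[Theorem 1.4 (A)]{BC}: the content of that theorem is exactly the section $(\pi_+)_*\circ\tilde{\alpha}_-^*\circ(\alpha_-^*)^{-1}$ that you construct, with the splitting hinging, as in your argument, on $\omega_{\pi_+}\cong\SO_{Bl_+}\big((n-2)E(n,E)\big)$ being a square for $n$ even, on base change over the open complement, and on homotopy invariance for $\alpha_-$. The only small correction is terminological: the identity $v_+^*\circ(\pi_+)_*=\tilde{v}_+^*$ is the Calm\`es--Hornbostel base-change theorem \cite[Theorem 5.5]{CH11}, not the projection formula.
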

\begin{proof}
	As $n$ is even, \cite[Theorem 1.4 (A)]{BC} applies here and, as a consequence, the localization sequence breaks down into short split exact sequences. Finally, the last statement follows directly from homotopy invariance.
\end{proof}

\subsection{Preparation for the case of $n$ odd}
% Let $n$ be an odd number.
It now remains to compute $W^i(OG_\pm(n,E),L)$ for $n$ odd. This case is more complicated and forms the technical heart of the paper.\	We only deal with one component, the other one can be obtained \textit{mutatis mutandis}.  Let us begin by observing that the proof of Proposition 3.3 in \cite{BC} provides more information, which will be useful for the proof. Rewrite Diagram (\ref{eq:keydiagram}) starting with the canonical closed embedding $\iota_{1+}: OG^{\EC_2}_+(n,E)   \to OG^{E_1}_+(n,E)$ as follows:
	\begin{equation}\label{eq:keydiagram2}
		\xymatrix{OG^{\EC_2}_+(n,E)  \ar[rr]^-{\iota_{1+}} && OG^{E_1}_+(n,E)   && \ar[ll]_-{v_{1+}} U^{E_1}_{+}(n,E) \ar[d]^-{\alpha_{1-}} \ar[lld]_-{\tilde{v}_{1+}} \\
		%%%%%%%%%%%%%%%%
			E^{E_1}(n,E)  \ar[rr]^-{\tilde{\iota}_{1+}} \ar[u]^-{\tilde{\pi}_{1+}} \ar@/_3pc/[rrrr]_-{\tilde{\pi}_{1-}} && Bl^{E_1}_+(n,E) \ar[u]^-{\pi_{1+}} \ar[rr]^-{\tilde{\alpha}_{1-}} && OG^{\EC_2}_-(n,E) }
	\end{equation} 
At the level of functor of points the previous diagram can be described by
	\[  \xymatrixcolsep{1pc}
	\begin{footnotesize}
	\xymatrix{
		 \left\{L_n^+ \ \middle|\: \  L_n^+ \supset E_2\right\}
		\ar[r] 
		&\left\{L_n^+ \ \middle|\: \ L_n^+ \supset E_1 \right\}
		%\ar[r] 
		&& 	\left\{ L_n^+\ \middle|\: \
		{\begin{matrix} 
				L_n^+ \not\supset E_2
			\\ 	L_n^+ \supset  E_1 	\end{matrix}}   \right\} 
		\ar[dll]\ar[d]\ar[ll] 		\\
		% % % % % % % % % % % % % % % % % % % % % % % % % % % % % % % % % % % % % % % % % % % % % % % % % % % % % % % % % %
		\left\{(P_{n-1},L_n^+,L_n^-)\ \middle|\: \
		{\begin{matrix} 
	      L_n^+\supset P_{n-1} 
		\\ 	L_n^- \supset P_{n-1}  
		\\ P_{n-1} \supset E_2
			\end{matrix}}   \right\} 
		\ar[r]\ar[u]  
		&\left\{(P_{n-1},L_n^+, L_n^- )\ \middle|\: \ 
		{\begin{matrix}  
				L_n^+\supset E_1 
				\\ L_n^-\supset E_2 
			    \\L_n^+ \supset P_{n-1} 
			    \\L_n^- \supset P_{n-1}
			\end{matrix}} \right\}
		\ar[rr]\ar[u]
		&& \left\{ L_n^- \ \middle|\: \
		{\begin{matrix}L_n^- \supset E_2 
		 	\end{matrix}}  \right\}  	}
	\end{footnotesize} 
	\] 
Note that, in Diagram (\ref{eq:keydiagram2}), $\tilde{\iota}_{1+}$ is a regular embedding of codimension one, while $\iota_{1+}$ is a regular embedding of codimension $n-2$.  Suppose that $n$ is odd.\ As in Proposition \ref{thm:evendim-eventwist} and \cite[Theorem 1.4 (A)]{BC}, the short exact sequence in the following diagram
\[\xymatrix{   0 \ar[r] &    W^{i-(n-2)}(OG^{E_2}_+(n,E), \omega_{\iota_{1+}})  \ar[r]^-{(\iota_{1+})_*} &            W^i(OG^{E_1}_+(n,E)) \ar[r]^{v_{1+}^*} & W^i(U^{E_1}_+(n,E)) \ar[r] &0  
	\\%
	& & W^i(Bl_+^{E_1}(n,E)) \ar[u]^-{(\pi_{1+})_*} & W^i(OG_-^{E_2}(n,E)) \ar[l]^{\tilde{\alpha}_{1- }^*} \ar[u]^-{\alpha_{1- }^*}_-{\cong}
}   \]
has a right splitting:
$$\beta : =   (\pi_{1+})_* \circ \tilde{\alpha}_{1- }^* \circ (\alpha_{1- }^*)^{-1}.$$  

\begin{remark}\label{rmk:canonicalbundleinn-1}
	By Proposition \ref{prop:canonicalbundle} and Lemma \ref{lem:embeddingPhi}, we see that $\omega_{\iota_{1+}} \cong \SO(-2) \otimes \det \tilde{E}_n \otimes E_1^{\otimes (n-4)}$.\ 
\end{remark}

\begin{lem}\label{lem:pi1inverse} Assume that $n$ is an odd number. The maps $$(\pi_{1+})_*: W^{i}(Bl_+^{E_1}(n,E)) \leftrightarrows W^{i}(OG^{E_1}_+(n,E)) : \pi_{1+}^* $$
	are isomorphisms and inverse to each other.   \end{lem}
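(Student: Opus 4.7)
The plan is to establish the isomorphism by combining a vanishing of certain twisted Witt groups on the exceptional divisor (coming from Walter--Nenashev) with the projection formula, to show that $\pi_{1+}^*$ is surjective and that $(\pi_{1+})_* \circ \pi_{1+}^* = \mathrm{id}$.

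First, I would verify that $(\pi_{1+})_*$ and $\pi_{1+}^*$ move between untwisted Witt groups. The blow-up $\pi_{1+}$ has a regular center of codimension $n-2$, hence $\omega_{\pi_{1+}} \cong \SO_{Bl_+^{E_1}(n,E)}((n-3)\,E^{E_1}(n,E))$. Since $n$ is odd, $n-3$ is even, so this relative canonical line bundle is a square in $\Pic(Bl_+^{E_1}(n,E))$; by two-periodicity in twists (Lemma \ref{lem:twoperio}) the pushforward lands in $W^i(OG^{E_1}_+(n,E))$ without introducing an extra twist.

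Second, I would identify $E^{E_1}(n,E)$ with the projective bundle $\mathbb{P}(N)$ over $OG^{E_2}_+(n,E)$, where $N$ is the rank-$(n-2)$ normal bundle of the embedding $\iota_{1+}$, and match $\omega_{\tilde{\iota}_{1+}}$ with $\SO_{\mathbb{P}(N)}(1)$ up to a pullback line bundle from $OG^{E_2}_+(n,E)$. Because $n-2$ is odd, Walter--Nenashev (Theorem \ref{thm:Walter-Nenashev}) gives $W^j(\mathbb{P}(N), q^*M \otimes \SO(1)) = 0$ for every $j$ and every pullback $M$; hence $W^j(E^{E_1}(n,E), \omega_{\tilde{\iota}_{1+}}) = 0$. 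Feeding this vanishing into the localization sequence for the codimension-one embedding $\tilde{\iota}_{1+} : E^{E_1}(n,E) \hookrightarrow Bl^{E_1}_+(n,E)$, combined with d\'evissage, yields an isomorphism $\tilde{v}_{1+}^* : W^i(Bl^{E_1}_+(n,E)) \xrightarrow{\cong} W^i(U^{E_1}_+(n,E))$.

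Third, to obtain surjectivity of $\pi_{1+}^*$, I would use that $n-1$ is even, so the analogue of Proposition \ref{thm:evendim-eventwist} applied to the subproblem $OG_+(n-1, E^1)$ shows that the localization sequence on $OG^{E_1}_+(n,E)$ is short split exact; in particular $v_{1+}^* = \tilde{v}_{1+}^* \circ \pi_{1+}^*$ is surjective. Since $\tilde{v}_{1+}^*$ is an isomorphism by the previous step, $\pi_{1+}^*$ must itself be surjective. Finally, the Calm\`es--Hornbostel projection formula $(\pi_{1+})_*(\pi_{1+}^* x \cdot y) = x \cdot (\pi_{1+})_*(y)$ with $y = 1_{Bl}$ reduces the identity $(\pi_{1+})_* \circ \pi_{1+}^* = \mathrm{id}$ to the equality $(\pi_{1+})_*(1_{Bl}) = 1_{OG^{E_1}_+(n,E)}$, which I would verify by restriction to the dense open $U^{E_1}_+$ (where $\pi_{1+}$ is an isomorphism and the equality is trivial), together with the vanishing from Step 2 which rules out any possible difference supported along $OG^{E_2}_+(n,E)$.

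Combining the surjectivity from Step 3 with the left-inverse identity from the projection formula shows that $\pi_{1+}^*$ is both surjective and injective, hence an isomorphism with inverse $(\pi_{1+})_*$. The main technical difficulty will be Step 2: keeping precise track of the pullback twist by which $\omega_{\tilde{\iota}_{1+}}$ differs from $\SO_{\mathbb{P}(N)}(1)$ on the exceptional divisor, so that Walter--Nenashev's vanishing can be applied in the twisted form needed here.
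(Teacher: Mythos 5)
Your overall reduction (use the projection formula via Lemma \ref{lem:projblow} to get $(\pi_{1+})_*\circ\mathrm{per}\circ\pi_{1+}^*=\mathrm{id}$, then prove that $\pi_{1+}^*$ is surjective, hence bijective) is the same skeleton as the paper's, but your route to surjectivity rests on a false vanishing, and this is a genuine gap. In Step 2 you claim that, because the rank $n-2$ of $N_{\iota_{1+}}$ is odd, Theorem \ref{thm:Walter-Nenashev} gives $W^j(E^{E_1}(n,E),\omega_{\tilde{\iota}_{1+}})=0$. The parity is exactly the wrong way around: the vanishing of the $\SO(1)$-twisted Witt groups of a projective bundle holds when the \emph{relative dimension} is odd (the hypothesis of Theorem \ref{thm:Walter-Nenashev} is an even-rank bundle filtered by two odd-rank pieces, as in its application to $OG_\pm(2,E)$), whereas $E^{E_1}(n,E)\to OG^{E_2}_+(n,E)$ is a projective bundle of \emph{even} relative dimension $n-3$. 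In that case Theorem \ref{thm:projectivebundleevendimoddtwist} says the twisted group does not vanish at all: $W^{j}(E^{E_1}(n,E),\SO(-1))\cong W^{j-(n-3)}(OG^{E_2}_+(n,E),\det N_{\iota_{1+}})$. Consequently $\tilde{v}_{1+}^*$ is not injective (its kernel is this nonzero group), so you cannot deduce surjectivity of $\pi_{1+}^*$ from surjectivity of $v_{1+}^*=\tilde{v}_{1+}^*\circ\pi_{1+}^*$, and the final argument of Step 3, where the same vanishing is invoked to rule out a correction to $(\pi_{1+})_*(1)$ supported on the center, also collapses (the correct justification of $\pi_*(1)=1$ for blow-ups is the reference given after Lemma \ref{lem:projblow}).

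The fix is essentially the paper's proof, which uses the nonvanishing rather than a vanishing: compare the split short exact localization sequences of $(Bl_+^{E_1}(n,E),E^{E_1}(n,E))$ and of $(OG_+^{E_1}(n,E),OG_+^{E_2}(n,E))$, with $\pi_{1+}^*$ in the middle and the identity on $W^i(U_+^{E_1}(n,E))$ on the right; the excess intersection formula applied to the left cartesian square of Diagram (\ref{eq:keydiagram2}) identifies the induced map on kernels with $e(\mathcal{Q})\circ\tilde{\pi}_{1+}^*$, and this is an isomorphism precisely by Theorem \ref{thm:projectivebundleevendimoddtwist} because $n-3$ is even. The five lemma then gives that $\pi_{1+}^*$ is an isomorphism, and Lemma \ref{lem:projblow} supplies the statement about $(\pi_{1+})_*$. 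Your Step 1 (evenness of $\omega_{\pi_{1+}}$, so that no twist appears) is fine and agrees with the paper's implicit use of periodicity.
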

\begin{proof}
	The excess normal bundle of the left cartesian square in Diagram (\ref{eq:keydiagram2}) is precisely the universal quotient bundle $\mathcal{Q}$ of the projective bundle $\pi_{1+}: E^{E_1}(n,E) \rightarrow OG^{E_2}_+(n,E)$, which is the cokernel of the map $\SO_{E^{E_1}(n,E)}(-1) \rightarrow \tilde{\pi}_{1+}^*\mathcal{N}_{\iota_{1+}}$. 

By Lemma \ref{lem:projblow}, it is enough to prove that $ \pi_{1+}^*$ is an isomorphism.\ Consider the following diagram
	\[ \xymatrix{ 0 \ar[r] & W^{i-1}(E^{E_1}(n,E), \SO(-1)) \ar[r]^-{(\tilde{\iota}_{1+})_*} & W^i(Bl_+^{E_{1}}(n,E)) \ar[r]^-{\tilde{v}_{1+}^*} & W^i(U_+^{E_{1}}(n,E)) \ar[r] & 0  
		\\%%%%%%%%%%%%%%
		0 \ar[r] & W^{i-(n-2)}(OG^{E_2}_+(n,E), \det \tilde{E}_n\otimes E_1 ) \ar[u]^-{e(\mathcal{Q}) \circ \tilde{\pi}_{1+}^*} \ar[r]^-{(\iota_{1+})_*} & W^i(OG^{E_{1}}_+(n,E)) \ar[u]^-{\pi_{1+}^*} \ar[r]^-{v_{1+}^*} & W^i(U_+^{E_1}(n,E))\ar[u]^-{=}  \ar[r] & 0 \,, }\]
	where commutativity of the left square is obtained by using the excess intersection formula (cf.\ \cite{Fasel09}) applied to the left cartesian square of Diagram (\ref{eq:keydiagram2}). 	
	 The left vertical %horizontal  
	arrow of the previous diagram is an isomorphism by Theorem \ref{thm:projectivebundleevendimoddtwist}, because $\tilde{\pi}_{1+}$ is a projective bundle of relative dimension $n-3$, which is even by hypothesis. 
\end{proof}

\begin{lem}\label{lem:E2split}
	Assume $n$ to be an odd number. The short exact sequence
	\[\xymatrix{   0 \ar[r] &    W^{i-(n-2)}(OG^{E_2}_+(n,E), \det \tilde{E}_n \otimes E_1)  \ar[r]^-{(\iota_{1+})_*} &   W^i(OG^{E_1}_+(n,E))   \ar[r]^{v_{1+}^*} & W^i(U^{E_1}_+(n,E)) \ar[r] &0 } \]
	has a left splitting
	$   (\tilde{\pi}_{1+})_* \circ (\tilde{\pi}_{1-})_*^{-1}  \circ (\tilde{\alpha}_{1- })_* \circ  \pi_{1+}^*.$ 
\end{lem}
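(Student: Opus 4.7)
The plan is to verify that the proposed composition, pre-composed with $(\iota_{1+})_*$, is the identity on $A := W^{i-(n-2)}(OG^{E_2}_+(n,E), \det \tilde{E}_n \otimes E_1)$, by exploiting the left Cartesian square of Diagram (\ref{eq:keydiagram2}) together with the excess intersection and projective bundle formulae.

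First I would apply Fasel's excess intersection formula to that Cartesian square. As recalled in the proof of Lemma \ref{lem:pi1inverse}, the excess normal bundle is the universal quotient $\mathcal{Q}$ of the projective bundle $\tilde{\pi}_{1+}$, yielding
\[\pi_{1+}^* \circ (\iota_{1+})_* = (\tilde{\iota}_{1+})_* \circ e(\mathcal{Q}) \circ \tilde{\pi}_{1+}^*.\]
Post-composing with $(\tilde{\alpha}_{1-})_*$ and using the identity $\tilde{\alpha}_{1-} \circ \tilde{\iota}_{1+} = \tilde{\pi}_{1-}$ read off from Diagram (\ref{eq:keydiagram2}), together with functoriality of pushforwards, collapses the middle two factors to
\[(\tilde{\alpha}_{1-})_* \circ \pi_{1+}^* \circ (\iota_{1+})_* = (\tilde{\pi}_{1-})_* \circ e(\mathcal{Q}) \circ \tilde{\pi}_{1+}^*.\]
Applying $(\tilde{\pi}_{1-})_*^{-1}$ then formally eliminates $(\tilde{\pi}_{1-})_*$, after which the last step $(\tilde{\pi}_{1+})_* \circ e(\mathcal{Q}) \circ \tilde{\pi}_{1+}^* = \mathrm{id}$ is expected to follow from the projection formula combined with $(\tilde{\pi}_{1+})_*(e(\mathcal{Q})) = 1$, itself a consequence of Theorem \ref{thm:Walter-Nenashev} applied to the odd-rank projective bundle $\tilde{\pi}_{1+}$.

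The main obstacle I anticipate is to give a clean meaning to $(\tilde{\pi}_{1-})_*^{-1}$. Since $n$ is odd, $\tilde{\pi}_{1-}\colon E^{E_1}(n,E) \to OG^{E_2}_-(n,E)$ is a projective bundle of odd rank $n-2$ whose pushforward is \emph{not} a global isomorphism: the Walter--Nenashev decomposition splits $W^\ast(E^{E_1}(n,E))$ into two summands, and $(\tilde{\pi}_{1-})_*$ vanishes on one of them while inducing an isomorphism on the other. I would therefore define $(\tilde{\pi}_{1-})_*^{-1}$ as the inverse on this ``top'' summand, given explicitly by multiplication by the Euler class $e(\mathcal{Q}')$ of the universal quotient bundle of $\tilde{\pi}_{1-}$ followed by $\tilde{\pi}_{1-}^*$, and verify that the element $e(\mathcal{Q}) \cdot \tilde{\pi}_{1+}^*(x)$ lies in this top summand. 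Granted this, the chain of identities collapses the whole composition to $x \mapsto (\tilde{\pi}_{1+})_*(e(\mathcal{Q}) \cdot \tilde{\pi}_{1+}^*(x)) = x$, establishing the left splitting.
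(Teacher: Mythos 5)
Your chain of identities---the excess intersection formula on the left Cartesian square of Diagram (\ref{eq:keydiagram2}), functoriality along $\tilde{\alpha}_{1-}\circ\tilde{\iota}_{1+}=\tilde{\pi}_{1-}$, cancellation of $(\tilde{\pi}_{1-})_*^{-1}(\tilde{\pi}_{1-})_*$, and the final identity $(\tilde{\pi}_{1+})_*\left(e(\mathcal{Q})\cdot\tilde{\pi}_{1+}^*x\right)=x$---is exactly the paper's argument. The gap is in how you justify the last two steps, and it comes from not tracking the twist. The excess bundle $\mathcal{Q}$ has rank $n-3$, and $\det\mathcal{Q}^\vee\equiv\SO(-1)$ modulo squares and pullbacks from the base, so the element $e(\mathcal{Q})\cdot\tilde{\pi}_{1+}^*(x)$ lives in $W^{i-1}(E^{E_1}(n,E),\SO(-1))$, not in the untwisted group. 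On this $\SO(-1)$-twisted group the pushforwards $(\tilde{\pi}_{1\pm})_*$ are genuine isomorphisms by Theorem \ref{thm:projectivebundleevendimoddtwist}, because $\tilde{\pi}_{1\pm}$ are projective bundles of \emph{even} relative dimension $n-3$ (with $n$ odd). Hence your concern that $(\tilde{\pi}_{1-})_*$ is ``not a global isomorphism,'' and the proposed partial inverse on a ``top summand'' of $W^\ast(E^{E_1}(n,E))$, rest on a wrong parity/twist count; note also that the splitting map in the statement of Lemma \ref{lem:E2split} presupposes honest invertibility of $(\tilde{\pi}_{1-})_*$, so reinterpreting $(\tilde{\pi}_{1-})_*^{-1}$ as a partial inverse would change the statement. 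Your explicit formula $e(\mathcal{Q}')\circ\tilde{\pi}_{1-}^*$ does happen to be the correct inverse on the twisted group, so the computation is repairable once the twist is made explicit.

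The second, related, inaccuracy is the citation for the final step: $(\tilde{\pi}_{1+})_*\circ e(\mathcal{Q})\circ\tilde{\pi}_{1+}^*=\mathrm{id}$ (equivalently, via the projection formula, $(\tilde{\pi}_{1+})_*(e(\mathcal{Q}))=1$) does not follow from Theorem \ref{thm:Walter-Nenashev}, which only gives an abstract additive decomposition without identifying the maps realizing it. Supplying that identification is precisely the purpose of the appendix: Theorem \ref{thm:projectivebundleevendimoddtwist} states that $e(\mathcal{Q})\circ\tilde{\pi}_{1+}^*$ and $(\tilde{\pi}_{1+})_*\circ\mathrm{per}$ are mutually inverse isomorphisms between $W^{i-(n-2)}(OG^{E_2}_+(n,E),\det\tilde{E}_n\otimes E_1)$ and $W^{i-1}(E^{E_1}(n,E),\SO(-1))$, and this is the input the paper uses here (as it already did in Lemma \ref{lem:pi1inverse}). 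With these two corrections your argument coincides with the paper's proof.
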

\begin{proof} By the projective bundle formula (cf.\ Theorem \ref{thm:projectivebundleevendimoddtwist}), we see that  $(\tilde{\pi}_{1-})_* $ and $ (\tilde{\pi}_{1+})_*$ are isomorphisms. Consider the following diagram
	\[ \xymatrix{
		W^{i-1}(E^{E_1}(n,E), \SO(-1)) \ar[r]^-{(\tilde{\pi}_{1-})_*}_-{\cong}   \ar@/_8pc/[dd]_-{(\tilde{\pi}_{1+})_*}^-{\cong}  \ar@{=}[d]  &   W^{i-(n-2)}(OG_-^{E_2}(n,E),  \det E_n \otimes E_1) \\
		W^{i-1}(E^{E_1}(n,E), \SO(-1)) \ar[r]^-{(\tilde{\iota}_{1+})_*}   & W^i(Bl_+^{E_1}(n,E)) \ar[u]^-{(\tilde{\alpha}_{1- })_*}
		\\ %
		W^{i-(n-2)}(OG^{E_2}_+(n,E), \det \tilde{E}_n \otimes E_1 ) \ar[u]^-{e(\mathcal{Q}) \circ \tilde{\pi}_{1+}^*}_-{\cong} \ar[r]^-{(\iota_{1+})_*} & W^i(OG^{E_1}_+(n,E)) \ar[u]^-{\pi_{1+}^*}_-{\cong} .
	}\]
	The result follows by its commutativity.
\end{proof}

\begin{lem}\label{lem:OGE2splits} The sequence 
	\[\xymatrix{   0 \ar[r] &    W^{i-(n-2)}(OG^{E_2}_+(n,E), \det \tilde{E}_n \otimes E_1)  \ar[r]^-{(\iota_{1+})_*} &            W^i(OG^{E_1}_+(n,E)) \ar[r]^{\iota_{1+}^*} & W^i(OG^{E_2}_+(n,E)) \ar[r] &0  	}   \]
is split exact.  Moreover, $ W^i(OG^{E_2}_+(n,E)) \cong  W^i(OG^{E_2}_-(n,E))$. 
\end{lem}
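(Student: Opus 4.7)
The plan is to derive the new split short exact sequence from the one already established in Lemma \ref{lem:E2split}. My strategy proceeds in three movements.

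First, I would establish the ``moreover'' statement. Lemma \ref{lem:embeddingPhi} gives isomorphisms $OG^{E_2}_\pm(n,E) \cong OG_\pm(n-2, E^2)$. Since $E = \SO_S^{2n}$ carries the standard split hyperbolic form, $E^2 = E_2^\perp/E_2$ inherits a split symmetric structure over $S$, and I can exhibit an explicit orthogonal automorphism $\tau \in O(E^2)(S) \setminus SO(E^2)(S)$ (for example, by swapping two elements of a hyperbolic basis or negating a single vector in a split basis). By Lemma \ref{lem: independent of points} such a $\tau$ swaps the two components of $OG(n-2, E^2)$, and the induced pullback yields $\tau^* : W^i(OG^{E_2}_-(n,E)) \xrightarrow{\cong} W^i(OG^{E_2}_+(n,E))$.

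Second, Lemma \ref{lem:E2split}, together with the homotopy invariance isomorphism $\alpha_{1-}^* : W^i(OG^{E_2}_-(n,E)) \xrightarrow{\cong} W^i(U^{E_1}_+(n,E))$ coming from the affine bundle $\alpha_{1-}$, and the isomorphism of step one, assembles into a split short exact sequence
\[
0 \to W^{i-(n-2)}(OG^{E_2}_+(n,E), \det \tilde{E}_n \otimes E_1) \xrightarrow{(\iota_{1+})_*} W^i(OG^{E_1}_+(n,E)) \xrightarrow{q} W^i(OG^{E_2}_+(n,E)) \to 0
\]
with $q := \tau^* \circ (\alpha_{1-}^*)^{-1} \circ v_{1+}^*$.

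Third, it remains to identify $q$ with the pullback $\iota_{1+}^*$. By the self-intersection formula for Witt groups (cf.\ \cite{CH11}), the composition $\iota_{1+}^* \circ (\iota_{1+})_*$ is multiplication by the Euler class $e(N_{\iota_{1+}}) \in W^{n-2}(OG^{E_2}_+(n,E), \omega_{\iota_{1+}})$. Using Remark \ref{rmk:canonicalbundleinn-1}, a careful analysis of the twist modulo squares (tracking how $\SO(-2)$ restricts to the Pfaffian bundle of $OG_+(n-2,E^2)$), combined with Proposition \ref{thm:L2d-1} applied inductively to $OG_+(n-2, E^2)$, is expected to force this Euler class to vanish. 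With $\iota_{1+}^* \circ (\iota_{1+})_* = 0$, the pullback $\iota_{1+}^*$ descends through the cokernel of $(\iota_{1+})_*$; comparing the two compositions on a spanning family of generators (for instance those provided by Proposition \ref{prop:K-CH}) identifies the descended map with $q$.

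The main obstacle is the third movement: correctly bookkeeping the twists of $\omega_{\iota_{1+}}$ on $OG^{E_2}_+(n,E)$, since the twist mixes the external Pfaffian bundle pulled back from $OG_+(n,E)$ with the intrinsic Pfaffian of $OG_+(n-2, E^2)$, and then verifying the Euler-class vanishing in the correct twisted Witt group. Should the direct Euler-class argument prove elusive, an alternative is to sidestep verifying $\iota_{1+}^* \circ (\iota_{1+})_* = 0$ altogether and instead construct a splitting of $\iota_{1+}^*$ through diagram (\ref{eq:keydiagram2}), exploiting the two projective bundle structures on $Bl^{E_1}_+(n,E)$ over $OG^{E_2}_\mp(n,E)$ together with the isomorphism of Lemma \ref{lem:pi1inverse}.
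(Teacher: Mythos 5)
The main gap is in your third movement, which is where the real content of the lemma lies. The self-intersection formula does give $\iota_{1+}^*\circ(\iota_{1+})_* = e(N_{\iota_{1+}})\cdot(-)$, but your proposed vanishing mechanism does not work: by Remark \ref{rmk:canonicalbundleinn-1}, $\omega_{\iota_{1+}}\cong \SO(-2)\otimes\det\tilde{E}_n\otimes E_1^{\otimes(n-4)}$, and modulo squares this is $\det\tilde{E}_n\otimes E_1$, a line bundle pulled back from $S$ --- not an odd power of the Pfaffian bundle $\SO(1)$ on $OG^{E_2}_+(n,E)\cong OG_+(n-2,E^2)$. Hence Proposition \ref{thm:L2d-1} says nothing about $W^{n-2}(OG^{E_2}_+(n,E),\omega_{\iota_{1+}})$, which is in general nonzero (by the main theorem it contains copies of $W^*(S)$); that multiplication by this Euler class annihilates $W^{*}(OG^{E_2}_+(n,E),\cdot)$ is essentially equivalent to the statement being proved and cannot be extracted from twist bookkeeping. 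Moreover, even granting $\iota_{1+}^*\circ(\iota_{1+})_*=0$, you still have to show that the induced map from the cokernel $W^i(U^{E_1}_+(n,E))$ to $W^i(OG^{E_2}_+(n,E))$ is an isomorphism; ``comparing on a spanning family of generators provided by Proposition \ref{prop:K-CH}'' is not an available move, since that proposition concerns oriented theories (K-theory, Chow groups), and the Witt groups of these varieties are exactly what is being computed at this stage, so no generating family is known.

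The paper's actual proof is the route you relegate to a fallback, and note that the fallback as you state it is also incomplete: a splitting of $\iota_{1+}^*$ alone does not give exactness in the middle. The paper transfers the whole sequence to the blow-up: using Lemma \ref{lem:pi1inverse}, the excess intersection formula, and the fact that $\tilde{\pi}_{1+}$ is a projective bundle of even relative dimension $n-3$, the sequence in question is identified through the vertical isomorphisms $e(\mathcal{Q})\circ\tilde{\pi}_{1+}^*$, $\pi_{1+}^*$ and $\tilde{\pi}_{1+}^*$ with the localization sequence of the codimension-one embedding $\tilde{\iota}_{1+}\colon E^{E_1}(n,E)\hookrightarrow Bl_+^{E_1}(n,E)$, an inclusion of projective bundles over the same base whose quotient is a line bundle; Lemma \ref{lem:chowmoving} then gives split exactness, the Euler-class argument being valid there precisely because the normal bundle is a line bundle (\cite[Proposition 14]{FS}). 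Finally, the ``moreover'' part is deduced in the paper from the first part together with Lemma \ref{lem:E2split} and homotopy invariance (two short exact sequences with the same kernel and middle term), whereas your reflection $\tau$ requires $E^2$ to be split with a chosen hyperbolic basis: this is fine in the setting of the main theorem, but not in the general complete-flag setting in which the lemma is stated, where the two components are genuinely different schemes in general (cf. Proposition \ref{Prop: OG2}).
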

\begin{proof}
	Consider the following diagram
	\[ \xymatrix{ 0 \ar[r] & W^{i-1}(E^{E_1}(n,E), \SO(-1)) \ar[r]^-{(\tilde{\iota}_{1+})_*} & W^i(Bl_+^{E_{1}}(n,E)) \ar[r]^-{\tilde{\iota}_{1+}^*} & W^{i}(E^{E_1}(n,E)) \ar[r] & 0  
		\\%%%%%%%%%%%%%%
		0 \ar[r] & W^{i-(n-2)}(OG^{E_2}_+(n,E), \det \tilde{E}_n\otimes E_1 ) \ar[u]^-{e(\mathcal{Q}) \circ \tilde{\pi}_{1+}^*}_-{\cong} \ar[r]^-{(\iota_{1+})_*} & W^i(OG^{E_{1}}_+(n,E)) \ar[u]^-{\pi_{1+}^*}_-{\cong} \ar[r]^-{\iota_{1+}^*} & W^i(OG_+^{E_2}(n,E))\ar[u]^-{\tilde{\pi}_{1+}^*}_-{\cong}  \ar[r] & 0 \ . }\]
All vertical maps are isomorphisms. The commutativity of the left square diagram (resp. the right square) follows from the excess intersection formula (resp.\ functoriality of pullback). The upper sequence is split exact by Lemma \ref{lem:chowmoving}. Note that $Bl_+^{E_{1}}(n,E)$ (resp. $E^{E_1}(n,E)$) is a projective bundle over $OG_+^{E_2}(n,E)$ of relative dimension $n-2$ (resp. $n-3$), and the condition of Lemma \ref{lem:chowmoving} is satisfied (cf. Proof of Theorem \ref{theo:blowup}). This proves the first statement. The last statement follows from the first one in combination with homotopy invariance and Lemma \ref{lem:E2split}. 
\end{proof}

\subsection{The main theorem}
We can now state our main result.
\begin{theo}\label{thm:odd dimension case}
There is an isomorphism of graded $W^\tot(S)$-modules
	\begin{equation}\label{eq:nodd}
		W^\tot(OG_+(n,E)) \cong W^{\tot}(OG^{E_2}_+(n,E))\big[-(2n-3)\big] \oplus W^\tot(OG^{E_2}_+(n,E))
	\end{equation}
	if $n \geq 3$ is odd.
\end{theo}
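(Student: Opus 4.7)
The plan is to apply the localization sequence (\ref{eq:long-witt}) arising from the Blow-up setup of Theorem \ref{theo:blowup}, and to exploit Lemmas \ref{lem:pi1inverse}, \ref{lem:E2split} and \ref{lem:OGE2splits} (together with their obvious $-$-analogues) to collapse the resulting $12$-term periodic sequence into a short split exact sequence. First I would use homotopy invariance along the affine bundle $\alpha_-:U_+(n,E) \to OG^{E_1}_-(n,E)$ to identify $W^\ast(U_+(n,E),v_+^\ast\LC) \cong W^\ast(OG^{E_1}_-(n,E),\alpha_-^\ast\LC)$. Applying Lemma \ref{lem:OGE2splits} (and its $-$-analogue, proved identically with $\tilde{E}_n$ replaced by $E_n$) would then yield, for appropriate twists $\mu$,
\[ W^i(OG^{E_1}_\pm(n,E),\mu) \cong W^{i-(n-2)}(OG^{E_2}_+(n,E), \mu \otimes \det \tilde{E}_n \otimes E_1) \oplus W^i(OG^{E_2}_+(n,E), \mu), \]
where the unshifted summand on the $-$ side is identified with the one on the $+$ side via the canonical isomorphism $W^\ast(OG^{E_2}_+) \cong W^\ast(OG^{E_2}_-)$ of Lemma \ref{lem:OGE2splits}.

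Next I would analyze the connecting map $\partial: W^i(OG^{E_1}_-(n,E),\mu) \to W^{i-(n-2)}(OG^{E_1}_+(n,E), \omega_{\iota_+}\otimes \mu)$ using the Balmer-Calm\`es description from \cite{BC}, which factors $\partial$ through the pullback along $\pi_\pm$, the pushforward along $\tilde{\iota}_\pm$, and the inverse of the affine bundle pullback $\alpha_\mp^\ast$ in diagram (\ref{eq:keydiagram}). Combined with the projective bundle formula (Theorem \ref{thm:Walter-Nenashev}), the projection formula from \cite{CH11} and the excess intersection formula of Fasel applied to the cartesian squares of (\ref{eq:keydiagram}) and (\ref{eq:keydiagram2}), this should show that with respect to the decompositions above $\partial$ is triangular: it vanishes on the unshifted summand $W^i(OG^{E_2}_+)$ of the source and restricts to an isomorphism from the shifted summand onto the unshifted summand of the target. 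The cancellation so produced in the long exact sequence then leaves a short exact sequence
\[ 0 \to W^{i-(2n-3)}(OG^{E_2}_+(n,E), \tilde{\omega}) \to W^i(OG_+(n,E)) \to W^i(OG^{E_2}_+(n,E)) \to 0, \]
with $\tilde{\omega}=\omega_{\iota_+}\otimes\det\tilde{E}_n\otimes E_1$. By Proposition \ref{prop:canonicalbundle} one has $\tilde{\omega} \cong \SO(-2)\otimes(\det\tilde{E}_n)^{\otimes 2}\otimes E_1^{n-1}$; as $n$ is odd, $E_1^{n-1}$ is a square in $\Pic$, and Proposition \ref{thm:L2d-1} takes care of the $\SO(\pm 1)$-twisted contributions, so after summing over $i\in\mathbb{Z}/4\mathbb{Z}$ and $[L]\in\Pic(S)/2$ the twist becomes trivial. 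A splitting can then be produced by imitating the left splitting constructed in Lemma \ref{lem:E2split}: push through the composition $OG_+(n,E) \leftarrow Bl_+(n,E) \to OG^{E_1}_-(n,E)$ from (\ref{eq:keydiagram}) and then apply Lemmas \ref{lem:pi1inverse} and \ref{lem:OGE2splits} to land in $W^\ast(OG^{E_2}_+(n,E))$.

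The main obstacle is the analysis of $\partial$ in the middle step: it requires intertwining the Blow-up setup for the pair $OG^{E_1}_+\subset OG_+$ with the Blow-up setup for the pair $OG^{E_2}_+\subset OG^{E_1}_+$ used implicitly in Lemma \ref{lem:OGE2splits}, and the projection-formula/excess-intersection bookkeeping for the nested projective bundles of (\ref{eq:keydiagram2}) involves many line bundle twists that must be matched carefully. A secondary obstacle is the systematic tracking of $\det E_n$, $\det \tilde{E}_n$, $E_1$ and $\SO(1)$ modulo squares throughout, so as to ensure that the twist in the final decomposition is indeed trivial in $\Pic(S)/2$ as the statement of the $W^\tot(S)$-module isomorphism demands.
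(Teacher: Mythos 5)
Your outline follows the same strategy as the paper (localization sequence for $\iota_+$, decomposition of the $E_1$-level Witt groups into $E_2$-level pieces via Lemma \ref{lem:OGE2splits}, triangularity of $\partial$, trivial twist mod squares, splitting), but the step you flag as ``the main obstacle'' is precisely the content of the proof, and it is not supplied. The claim that $\partial$ vanishes on the unshifted summand and maps the shifted summand isomorphically onto the unshifted summand of the target is exactly Sublemmas \ref{sublem:composition-iso}--\ref{sublem:composition-zero-2}, and it does not follow from the projection/excess-intersection formulas applied to the cartesian squares already present in Diagrams (\ref{eq:keydiagram}) and (\ref{eq:keydiagram2}). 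The hardest component, the vanishing of $\gamma\circ(\tilde{\pi}_+)_*\circ\tilde{\pi}_-^*\circ\beta$ (Sublemma \ref{sublem:composition-zero-2}), requires introducing new auxiliary fibre products ($H_\pm(n,E)$, $T(n,E)$ and their exceptional analogues), checking tor-independence of all the relevant squares (including a square of four regular immersions, via the argument of Fasel), repeated base change, Lemma \ref{lem:projblow} for the birational maps, and finally the vanishing $q_*\circ q^*=0$ for projective bundles of odd relative dimension (Theorem \ref{thm:projectivebundleodddimeventwist}). Without constructing these squares and verifying their tor-independence, ``$\partial$ is triangular'' remains an assertion, not a proof; so the core of the argument is missing.

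A secondary problem is your proposed splitting. Imitating the left splitting of Lemma \ref{lem:E2split} one level up does not work as stated: that lemma relies on $(\tilde{\pi}_{1\pm})_*$ being isomorphisms because $\tilde{\pi}_{1\pm}$ are projective bundles of \emph{even} relative dimension $n-3$, whereas at the top level the exceptional projections $\tilde{\pi}_\pm:E(n,E)\to OG^{E_1}_\pm(n,E)$ have odd relative dimension $n-2$ (for $n$ odd), so their pushforwards are not invertible and the analogous composite through $OG_+(n,E)\leftarrow Bl_+(n,E)\to OG^{E_1}_-(n,E)$ is not obviously a splitting. The paper sidesteps this entirely: once the short exact sequence of graded $W^\tot(S)$-modules is established (with $\Omega_+$ a module map by the projection formula and $[\omega_{\theta_+}]=1$ in $\Pic/2$), it splits because $W^\tot(OG_+^{E_2}(n,E))\cong W^\tot(OG_+(n-2,E^2))$ is, by induction down to $n=1$, a free $W^\tot(S)$-module. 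Your twist bookkeeping is essentially fine ($\tilde{\omega}$ is a square when $n$ is odd; Proposition \ref{thm:L2d-1} is not actually needed for that, since $\SO(-2)$ is already a square), but the two gaps above — the unproven triangularity of $\partial$ and the unjustified splitting — are genuine.
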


\begin{proof} By \cite[Theorem 1.4 (B)]{BC}, the connecting homomorphism $\partial$ in the localization sequence (\ref{eq:long-witt}) can be fitted into the commutative diagram
\[ \xymatrix{ W^i(U_+(n,E) )  \ar[r]^-{\partial} &  W^{i-(n-2)}(OG_+^{E_1}(n,E),  \det{\tilde{E}_n}) \\
	W^{i}(OG^{E_1}_-(n,E)) \ar[u]^-{\alpha_+^*}_-{\cong} \ar[r]^{\tilde{\pi}_-^*} & W^{i}(E(n,E)) \ar[u]^-{(\tilde{\pi}_+)_*}  .}\]
From now on, for simplicity, we shall suppress all the twists, since they can be recovered if necessary. Our strategy is to further decompose the $n-1$ cases (\textit{i.e.} when one has $OG_\pm^{E_1}(n,E)$) into the $n-2$ cases and analyze the map $(\tilde{\pi}_+)_*  \circ \tilde{\pi}_-^*$.\ Look at the following picture
\[ \small \xymatrix{
	W^{i}(OG^{E_2}_+(n,E)) \ar[dr]|-{\ \beta\ } \ar[d]^-{\tilde{\alpha}_{1+ }^*} \ar[r]^-{\alpha_{1+}^*}_-{\cong} &   W^i(U^{E_1}_-(n,E))    &  W^{i-2(n-2)}(OG^{E_2}_+(n,E)) \ar[d]^-{(\iota_{1+})_*}
	&\ar[l]_-{\epsilon}^-{\cong} W^{i-2(n-2)}(OG_-^{E_2}(n,E)) &
	\\ %
	W^i(Bl_-^{E_1}(n,E)) \ar[r]^-{(\pi_{1-})_*} & W^i(OG^{E_1}_-(n,E)) \ar[r]^-{(\tilde{\pi}_+)_*  \circ \tilde{\pi}_-^*}  \ar[u]^-{v_{1-}^*}& W^{i-(n-2)}(OG^{E_1}_+(n,E) ) \ar[d]^-{v_{1+}^*} \ar[r]^-{\pi_{1+}^*} \ar[ur]|-{\ \gamma\ } & W^{i-(n-2)}(Bl_+^{E_1}(n,E)) \ar[u]_-{(\tilde{\alpha}_{1- })_*}&
	\\ %
	&W^{i-(n-2)}(OG^{E_2}_-(n,E))  \ar[u]^{(\iota_{1-})_*}& W^{i-(n-2)}(U^{E_1}_+(n,E))& &\phantom{a}\hspace{-1.5 cm},}\]
where $\beta$ and $\gamma$ are defined as the depicted composition, and $ \epsilon : =  (\tilde{\pi}_{1+})_* \circ (\tilde{\pi}_{1-})_*^{-1}$.

\begin{sublemma}\label{sublem:composition-iso}
	The composition $v_{1+}^* \circ (\tilde{\pi}_+)_*  \circ \tilde{\pi}_-^* \circ (\iota_{1-})_*$ is an isomorphism.
\end{sublemma}
\begin{proof}[Proof of Sublemma \ref{sublem:composition-iso}]
	Consider the following diagram
	$$
	\xymatrix{
		U^{E_1}_+(n,E) \ar[r]^-{v_{1+}} \ar@{=}[dd] & OG^{\EC_1}_+(n,E)  \ar[r]^-{\iota_+} & OG_+(n,E)   & \ar[l]_-{v_+} U_+(n,E) \ar[d]^-{\alpha_+}  
		\\%
		& E(n,E)  \ar@/_1pc/[rr]_{\tilde{\pi}_-} \ar[r]^-{\tilde{\iota}_+} \ar[u]^-{\tilde{\pi}_+} & Bl_+(n,E) \ar[u]^-{\pi_+} \ar[r]^-{\tilde{\alpha}} & OG^{\EC_1}_-(n,E)
		\\%
		U^{E_1}_+(n,E)\ar[r]^-{\tilde{v}_{1+}} \ar@/_2pc/[rrr]_-{\alpha_{1- }} & Bl_+^{E_1}(n,E) \ar[rr] \ar[u] & & OG_-^{E_2}(n,E) \ar[u]_-{\iota_{1-}}\ ,  \\
		&
	}
	\\%
	$$
	where we make the following interpretation on functors of points:
	\[ \tiny \xymatrix{
		\left\{ L_n^+\  \middle|\: \ {\begin{matrix}
			L_n^+	 \supset E_1  \\ L_n^+ \not\supset E_2  
		\end{matrix}} \right\} \ar@{=}[dd]  \ar[r] & \left\{L_n^+ \ \middle| \: \  L_n^+ \supset  \EC_1 \right\} \ar[r] & \left\{L_n^+ \ \middle| \: \   E \supset  L_n^+  \right\}  & \ar[l]  \left\{L_n^+ \ \middle| \: \  L_n^+ \not\supset \EC_1 \ar[d] \right\}  
		\\%%%%%%%%%%%
		& \left\{(P_{n-1},L_n^+,L_n^-) \ \middle|\:  \
		{\begin{matrix} 
				L_n^+ \supset P_{n-1} \\
				L_n^- \supset P_{n-1} \\
				P_{n-1} \supset \EC_1  
		\end{matrix}} \right\}   \ar[u]  \ar[r] & \ar[u]
		\left\{(P_{n-1},L_n^+,L_n^-) \ \middle|\:  \
		{\begin{matrix}  L_n^-  \supset  \EC_1  \\ L_n^\pm \supset P_{n-1} 
		\end{matrix}} \right\}  \ar[r] & \left\{  L_n^-\  \middle|\: \ L_n^- \supset  \EC_1 \right\}
		\\%%%%%%%%%
		\left\{  L_n^+\  \middle|\: \ {\begin{matrix}
				 L_n^+ \supset E_1  \\ L_n^+ \not\supset E_2 
		\end{matrix}} \right\}  \ar[r] &   \left\{(P_{n-1}, L_n^+,L_n^-) \middle|\: \ 
		{\begin{matrix} L_n^+ \supset \EC_1  \\ L_n^- \supset E_2  \\  L_n^\pm \supset P_{n-1} 
		\end{matrix}} \right\}   \ar[u] \ar[rr]  & & \left\{  L_n^-\  \middle|\: \ L_n^- \supset E_2 \right\} \ . \ar[u] ,}\]  
	Note that the lower right rectangle diagram is cartesian and tor-independent.\ This follows from $\tilde{\pi}_-$ being flat. By the base change formula, we have
	$$v_{1+}^* \circ (\tilde{\pi}_+)_*  \circ \tilde{\pi}_-^* \circ (\iota_{1-})_*  = \alpha_{1- }^*$$
	and the claim follows, since $\alpha_{1- }$ is an affine bundle.
\end{proof}

\begin{sublemma}\label{sublem:composition-zero}
	The composition $(v_{1+})^* \circ  (\tilde{\pi}_+)_*  \circ \tilde{\pi}_-^* \circ \beta$ is zero.
\end{sublemma}
\begin{proof}[Proof of Sublemma \ref{sublem:composition-zero}]
	By Lemma \ref{lem:E2split} and \ref{lem:OGE2splits}, we see that $\ker(v_{1+}^*) = \ker (\iota_{1+}^*) = \textnormal{im} \, (\iota_{1+})_* $. Therefore, it is enough to show that $$ \iota_{1+}^* \circ (\tilde{\pi}_+)_*  \circ \tilde{\pi}_-^* \circ \beta = 0.$$ Note that we have the following diagram
	\begin{equation}\label{eq:caseII}
		\xymatrix{
			OG^{E_2}_+(n,E) \ar[r]^-{\iota_{1+}} & OG^{E_1}_+(n,E)   \\
			Bl_-^{E_1}(n,E) \ar@/_3pc/[rr]_{\pi_{1-}} \ar[r] \ar[u]^-{\tilde{\alpha}_{1+ }} &\ar[r] E(n,E)  \ar[r]^-{ \tilde{\pi}_-} \ar[u]^-{\tilde{\pi}_+} & OG^{E_1}_-(n,E) \ ,   }
	\end{equation}  
	where the left square is cartesian and tor-independent. % On functors of points, we have
%	\[ \scriptsize \xymatrix{ \left\{ L_n^+ \ \middle|\: \ L_n^+ \supset \EC_2 \right\} \ar[r] & \left\{  L_n^+ \ \middle|\: \  L_n^+ \supset \EC_1  \right\}     \\
%		\left\{(P_{n-1},L_n^+,L_n^-) \ \middle|\: \
%		{\begin{matrix} L_n^- \supset \EC_1 \\ L_n^+ \supset \EC_2 \\ L_n^\pm \supset P_{n-1} 
%		\end{matrix}} \right\}   \ar[u]  \ar[r] & \ar[u]
%		\left\{(P_{n-1},L_n^+,L_n^-) \ \middle|\: \
%		{\begin{matrix} L_n^\pm \supset \EC_1    \\  P_{n-1} \supset E_1
%		\end{matrix}} \right\}  \ar[r] & \left\{ L_n^- \ \middle|\: \ L_n^- \supset \EC_1  \right\} . }\]
	Observe that, in view of the functoriality of pullbacks and the base change formula \cite[Theorem 5.5]{CH11}, one has
	$$  \iota_{1+}^* \circ (\tilde{\pi}_+)_*  \circ \tilde{\pi}_-^* \circ \beta = (\widetilde{\alpha}_{1+})_* \circ  \pi_{1-}^* \circ  (\pi_{1-})_* \circ  \widetilde{\alpha}_{1+}^* \,. $$

	%(note that the square diagram is tor-independent since $\tilde{\pi}:E\rightarrow OG^{E_1}_+(n,E)$ is a projective bundle, hence flat).
	By Lemma \ref{lem:pi1inverse}, we see that the composition $\pi_{1-}^* \circ  (\pi_{1-})_*  $ is the identity map. Since $\widetilde{\alpha}_{1-}: Bl_-^{E_1}(n,E) \rightarrow OG^{E_2}_+(n,E)$ is a projective bundle of odd relative dimension,  Theorem \ref{thm:projectivebundleodddimeventwist} implies that $ (\widetilde{\alpha}_{1+})_*  \circ  \widetilde{\alpha}_{1+}^*  $ vanishes.
\end{proof}

\begin{sublemma}\label{sublem:composition-zero-1}
	The composition $ \gamma \circ (\tilde{\pi}_{+})* \circ \tilde{\pi}_-^* \circ (\iota_{1-})_*$ vanishes.
\end{sublemma}
\begin{proof}
	Consider the following diagram 
		$$
	\xymatrix{	   OG^{\EC_1}_+(n,E)   \ar@{=}[d]	&   \ar[l]_-{\tilde{\pi}_{1+}} E(n,E)  \ar[r]^-{\tilde{\pi}_-}  &  OG^{\EC_1}_-(n,E)
		\\%
 OG^{\EC_1}_+(n,E)  		 &  \ar[l]^-{\pi_{1+}}  Bl_+^{E_1}(n,E) \ar[r]^-{\tilde{\alpha}_{1-}} \ar[u]  & OG_-^{E_2}(n,E) \ar[u]_-{\iota_{1-}}\,,  
	}
	\\%
	$$
	where the right square diagram is cartesian and tor-independent. Note that 
	$$ \gamma \circ (\tilde{\pi}_{+})* \circ \tilde{\pi}_-^* \circ (\iota_{1-})_* = (\tilde{\alpha}_{1-}^*)_* \circ  \pi_{1+}^* \circ (\pi_{1+})_* \circ \tilde{\alpha}_{1-}^*\,. $$
		By Lemma \ref{lem:pi1inverse}, we see that the composition $\pi_{1+}^* \circ  (\pi_{1+})_*  $ is the identity map. Since $\widetilde{\alpha}_{1-}: Bl_-^{E_1}(n,E) \rightarrow OG^{E_2}_+(n,E)$ is a projective bundle of odd relative dimension,  Theorem \ref{thm:projectivebundleodddimeventwist} implies that $ (\widetilde{\alpha}_{1-})_*  \circ  \widetilde{\alpha}_{1-}^*  $ vanishes.
\end{proof}

\begin{sublemma}\label{sublem:composition-zero-2}
	The composition $ \gamma \circ (\tilde{\pi}_{+})* \circ \tilde{\pi}_-^* \circ \beta$ vanishes.
\end{sublemma}
\begin{proof}[Proof of Sublemma \ref{sublem:composition-zero-2}]
	Form the following diagram:
$$ 
		\xymatrix{
 OG^{E_2}_-(n,E)  \ar@{=}[dd]& \ar[l]_-{\tilde{\alpha}_{1- }} Bl_+^{E_1}(n,E)\ar[r]^-{\pi_{1+}}  \ar@{}[dr] | {\square} &
	 OG^{E_1}_+(n,E)  &  &  \\
			%%%%%%%%%%%%%%%%%%%
	&   H_+(n,E)  \ar@{}[dr] | {\square}   \ar[r]^-{\pi'_{1+}} \ar[u]_-{q_+} &E(n,E)  \ar@{}[dr] | {\square} \ar[r]^-{ \tilde{\pi}_{-}} \ar[u]^-{   \tilde{\pi}_+}   & OG^{E_1}_-(n,E)   \\
			%%%%%%%%%%%%%%%%
 OG^{E_2}_-(n,E)  & \ar[l]_-{\Psi_-} T(n,E)  \ar[d]^-{\Psi_+}  \ar[u]_-{\wp_{+}} \ar[r]^-{\wp_{-}}  &  H_-(n,E)    \ar[u]_-{\pi'_{1-}} \ar[r]^-{q_{-}} & \ar[u]_-{\pi_{1-}} Bl_-^{E_1} (n,E) 
\ar[d]^-{\tilde{\alpha}_{1+ }}  \\
			%%%%%%%%%%%%%%%
 & OG^{E_2}_+(n,E)  & & \ar@{=}[ll] OG^{E_2}_+(n,E) \,, 	} 
	$$  
	where all the square diagrams labelled with $\square$ are defined by fibre products, and we define $\Psi_\mp: = \tilde{\alpha}_{1\mp } \circ q_\pm \circ \wp_\pm$ as depicted in the picture.\ Next, we draw the following picture with diagrams labelled with $\square$ being fibre products \\
$$	\xymatrix{ 	  OG^{E_2}_-(n,E)  \ar@{}[dr] | {\square}   & \ar[l]_-{\tilde{\alpha}_{1- }}  Bl_+^{E_1}(n,E)  \ar@{}[dr] | {\square} &       \ar[l]_-{q_+}    \ar@{}[dr] | {\square}	H_+(n,E)	 & T(n,E)  \ar@/_2.5pc/[lll]^-{\Psi_-} \ar[r]^-{\Psi_+} \ar[l]_-{  \wp_{+}}   &  OG^{E_2}_+(n,E) \ar@{=}[d]  \    \\
	%%%%%%%%%%%%%%%
	E^{E_{1}}(n,E) \ar[u]^-{\tilde{\pi}_{1-}} \ar@/_4pc/[rrrr]_-{\tilde{\pi}_{1+}}   &  \ar[l]_-{\tilde{\alpha}'_{1-}}  E^{E_1}_{Bl_+}(n,E) \ar[u] &	 \ar[u] \ar[l]_-{q'_+}  E^{E_1}_{H_+}(n,E)   & \ar[l]_-{\wp'_+}   \ar@/^2.5pc/[lll]_-{\Psi'_{-}} E^{E_1}_T(n,E) \ar[r]^-{\Phi_+} \ar[u]_-{\tilde{\pi}'_{1-}} & OG^{E_2}_+(n,E)\  ,    }$$
	where we define $\Psi'_-: = \tilde{\alpha}'_{1-} \circ q'_+ \circ \wp'_+$ and $\Phi_+ := \tilde{\pi}_{1+} \circ   \Psi'_- $. Let us  observe the following facts:
 \begin{itemize}
 	\item [(1)] $\pi_{1\pm}$ , $\pi'_{1\pm}$, $\wp_\pm$, $\wp'_+$ are birational and locally complete intersection.  
 	\item [(2)] $\tilde{\pi}_\pm$, $q_\pm $, $q'_+$, $\tilde{\alpha}_{1\pm}$, $\tilde{\alpha}'_{1-}$ are projective bundles of relative dimension $n-2$ (an odd number).
 	 \item [(3)] $\tilde{\pi}_{1\pm}$ are projective bundles of relative dimension $n-3$ (an even number).
 \end{itemize}
All these claims follow from Theorem \ref{theo:blowup} and the properties of fibre products.\ 

Notice that all square diagrams labelled with $\square$ are tor-independent. Only the square labelled with $\square$ in the left lower corner of the first displayed diagram requires an explanation. Since $\pi'_{1\pm}$, $\wp_\pm$ are birational and locally complete intersection, we are reduced to show that a fibre square of four regular immersions of the same codimension is tor-independent.\ The proof for this fact follows from the argument of \cite[Lemma 22]{Fasel09}. By repeatedly applying the base change formula, we see that $$\gamma \circ (\tilde{\pi}_{+})* \circ \tilde{\pi}_-^* \circ \beta = (\Psi_-)_* \circ \Psi_+^*.$$
Since $\tilde{\pi}_{1-}^*$ is an isomorphism (cf. Theorem \ref{thm:projectivebundleevendimoddtwist})), it is enough to show that  $ \tilde{\pi}_{1-}^* \circ (\Psi_-)_* \circ \Psi_+^* = 0$. Note that 
$$\tilde{\pi}_{1-}^* \circ (\Psi_-)_* \circ \Psi_+^* = (\Psi'_-)_* \circ \Phi_+^* =  (\Psi'_-)_* \circ  \Psi_-'^* \circ  \tilde{\pi}_{1+}^*, $$ and
$$ (\Psi'_-)_* \circ  \Psi_-'^* = (\tilde{\alpha}'_{1-})_* \circ (q'_+)_* \circ (\wp'_+)_*  \circ \wp_+'^* \circ q_+'^* \circ  \tilde{\alpha}_{1-}'^*. $$
By Lemma \ref{lem:projblow}, we see $(\wp'_+)_*  \circ \wp_+'^*  = \textnormal{id}$, and the result follows from $(q'_+)_* \circ q_+'^* = 0 $ (cf.\ Theorem \ref{thm:projectivebundleodddimeventwist}). 
\end{proof}

\noindent \textbf{Proof of Theorem \ref{thm:odd dimension case} (continue).}	Let $\theta_+:OG^{E_2}_+(n,E)  \to OG_+(n,E)  $
	denote the composition $\iota_+ \circ \iota_{1+} $. By the above sublemmas, we have constructed the following short exact sequence
	\[ \xymatrix{   0  \ar[r] &  W^{i-(2n-3)}(OG^{E_2}_+(n,E), L \otimes \omega_{\theta_+})  \ar[r]^-{(\theta_+)_*} &     W^i(OG_+(n,E),L)  \ar[r]^-{\Omega_+} &    W^i(OG_+^{E_2}(n,E),L) \ar[r] & 0\,, }       \]
	where $L$ is a line bundle over the base $S$, and $\Omega_+ : =  (\alpha_{1+}^*)^{-1} \circ v_{1-}^* \circ (\alpha_{-}^*)^{-1} \circ v_+^*$. Note here that $[\omega_{\theta_+}] = 1$ in $\Pic(OG^{E_2}_+(n,E))/2$.  This formula follows from Proposition \ref{prop:canonicalbundle} and Remark \ref{rmk:canonicalbundleinn-1}. 
	
	It is clear that $\Omega_+$ is a morphism of graded $W^\tot(S)$-modules, because it is the composition of morphisms of graded $W^\tot(S)$-modules. Using the projection formula, we obtain the following short exact sequence of graded $W^\tot(S)$-modules
	\[ \xymatrix{   0  \ar[r] &  W^{\tot}(OG^{E_2}_+(n,E))\big[-(2n-3)\big]  \ar[r]^-{\theta_*} &     W^\tot(OG_+(n,E))  \ar[r]^-{\Omega_+} &    W^\tot(OG_+^{E_2}(n,E)) \ar[r] & 0 \,. }       \]
	Since $OG_+^{E_2}(n,E) \cong OG_+(n-2, E^2)$, by induction on $n$ we can assume that $W^\tot(OG_+^{E_2}(n,E)) $ is a graded projective $W^\tot(S)$-module. In fact, we can reduce to the $n=1$ case and $OG_+(1,E) \cong S$, therefore this exact sequence splits.
\end{proof}

\subsection{Proof of Theorem \ref{thm:main-theorem}}\label{sec:maintheorem} Note that trivial hyperbolic bundles always admit canonical complete flags. The statement of Theorem \ref{thm:main-theorem} for $n$ odd follows inductively from Theorem \ref{thm:odd dimension case} and results in Section \ref{sect:closed embeds}. For $n$ even, in view of Proposition \ref{thm:evendim-eventwist} and Lemma \ref{lem:OGE2splits}, one has the following isomorphism of graded $W^\tot(S)$-modules:
	\begin{equation}\label{eq:neven}
	W^\tot(OG_+(n,E)) \cong W^{\tot}(OG^{E_1}_+(n,E))\big[-(n-1),\ \det(\tilde{E}_n)\big] \oplus W^\tot(OG^{E_1}_+(n,E)) \,,
	\end{equation}
 which allows us to reduce to the odd case. By the assumption, $\det(\tilde{E}_n)$ is trivial, and the result follows. 
\begin{remark}
	For odd $n$, one can obtain a visual interpretation of Theorem \ref{thm:odd dimension case}. In fact, in this case  one has that even shifted Young diagrams must either have the first two rows of maximal length (as in those appearing in the first line of  Example \ref{exam:og(7)}) or two empty columns on the right (in the second line). The number of boxes in the first two rows is precisely $2n-3$, which, not by accident, also appears as the shift in the statement of Theorem \ref{thm:odd dimension case}.
	
	For $n$ even there are two classes of even shifted Young diagrams: those whose first row is full  and those whose last column is empty. Note that this fact can be viewed as a diagrammatic counterpart to the recursive description given in Formula (\ref{eq:neven}). 
\end{remark}

\begin{appendix}
	\section{Euler Classes and Projective bundles}
	\begin{center}
		Heng Xie
	\end{center}
	
	The projective bundle theorem for Witt groups was independently proved by Walter \cite{Walter} and Nenashev \cite{Ne}. In the main body of the paper we need to apply two theorems (Theorem \ref{thm:projectivebundleevendimoddtwist} and \ref{thm:projectivebundleodddimeventwist} in this appendix), which were proved as Theorem 1.2 and Theorem 1.4 in the unpublished \cite{Walter}. Although the approach by Nenashev is closer to our needs, his description of the underlying maps appearing in the projective bundle theorem is not very explicit. The aim of this appendix is to highlight some maps that in \cite{Ne} are sort of hidden and simultaneously solve the lack of a published reference for the theorems that we need.\  
	It is worth mentioning that Fasel \cite{Fasel13} has written down explicitly the underlying maps for the projective bundle theorem of $\I^j$-cohomology, which can be compared with our Theorem \ref{thm:projectivebundleevendimoddtwist} and \ref{thm:projectivebundleodddimeventwist}.

	\subsection{Projective bundles}\label{sec:projective bundles} Let $\rho \colon \mathcal{E} \rightarrow X$ be a vector bundle of rank $r+1$ over a regular scheme $X$ with $\frac{1}{2} \in \SO_X$. Let $s \colon X \rightarrow \mathcal{E}$ be the zero section of the bundle $\mathcal{E}$. Let $q\colon  \mathbb{P}(\mathcal{E}) \rightarrow X$ be the projective bundle associated to the vector bundle $\mathcal{E}$. Then, there is an exact sequence of vector bundles on $\mathbb{P}(\mathcal{E})$
	\[\xymatrix{ 0\ar[r] &\SO_{\mathbb{P}(\mathcal{E})}(-1) \ar[r] & q^* \mathcal{E} \ar[r] & \mathcal{Q} \ar[r] & 0 \,,}\]
	where $\mathcal{Q}$ is the universal quotient bundle.\ Consider the projective bundle $q': \mathbb{P}(\mathcal{E} \oplus \SO) \to X $.  The canonical split exact sequence $0 \rightarrow \SO \rightarrow \mathcal{E} \oplus \SO \rightarrow \mathcal{E} \rightarrow 0$ induces two closed embeddings
	$\iota \colon \mathbb{P}(\SO) = X \rightarrow \mathbb{P}(\mathcal{E} \oplus \SO) \textnormal{ and } \iota':\mathbb{P}(\mathcal{E}) \rightarrow \mathbb{P}(\mathcal{E} \oplus \SO) .$
	Let 
	$ v: U    \hookrightarrow \mathbb{P}(\mathcal{E} \oplus \SO)$ be the associated open complement of $\iota$. There is a canonical map $\alpha \colon U \rightarrow \mathbb{P}(\mathcal{E})$, which is a vector bundle. 
	
	Let $\mathcal{W}_{\mathbb{P}(\mathcal{E})}$ denote the vector bundle $\SO_{\mathbb{P}(\mathcal{E})}(-1) \oplus \SO_{\mathbb{P}(\mathcal{E})}  $ over $\mathbb{P}(\mathcal{E})$. Consider the projective bundle $\mathbb{P}(\mathcal{W}_{\mathbb{P}(\mathcal{E})})$ over $\mathbb{P}(\mathcal{E})$ equipped with the projection $\tilde{\alpha}: \mathbb{P}(\mathcal{W}_{\mathbb{P}(\mathcal{E})} ) \to \mathbb{P}(\mathcal{E}) $. 	By the universal property of projective bundles, there is a morphism $\pi: \mathbb{P}(\mathcal{W}_{\mathbb{P}(\mathcal{E})}) \to \mathbb{P}(\mathcal{E} \oplus \SO)$ defined by the rank one subbundle $\SO_{\mathbb{P}(\mathcal{W}_{\mathbb{P}(\mathcal{E})})}(-1) \subset \tilde{\alpha}^* \mathcal{W}_{\mathbb{P}(\mathcal{E})} \subset \mathcal{E}_{\mathbb{P}(\mathcal{W}_{\mathbb{P}(\mathcal{E})})} \oplus \SO_{\mathbb{P}(\mathcal{W}_{\mathbb{P}(\mathcal{E})})}$. One can form the following cartesian diagram
	
	\begin{equation}\label{eq:blowupPOPEO}\xymatrix{ \mathbb{P}(\SO) \ar@{^{(}->}[r]^-{\iota} & \mathbb{P}(\mathcal{E} \oplus \SO) \\
			\mathbb{P}(\SO_{\mathbb{P}(\mathcal{E})}) \ar[u]^-{\tilde{\pi}} \ar@{^{(}->}[r]^-{\tilde{\iota}} & \mathbb{P}(\mathcal{W}_{\mathbb{P}(\mathcal{E})})  \ar[u]^-{\pi} .
	}\end{equation}
		\begin{Prop}\label{prop:blowupproj}
		The projective bundle  $\mathbb{P}(\mathcal{W}_{\mathbb{P}(\mathcal{E})})$ is the blow-up of $ \mathbb{P}(\mathcal{E} \oplus \SO)$ along $\mathbb{P}(\SO)$ with exceptional fiber $	\mathbb{P}(\SO_{\mathbb{P}(\mathcal{E})}) $. Moreover,  $U \cong  \mathbb{P}(\mathcal{W}_{\mathbb{P}(\mathcal{E})})  -  	\mathbb{P}(\SO_{\mathbb{P}(\mathcal{E})}) $ is the total space of $\SO_{\mathbb{P}(\mathcal{E})}(1)$ over $\mathbb{P}(\mathcal{E})$.  
	\end{Prop}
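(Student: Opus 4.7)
The plan is to verify the universal property of the blow-up by exploiting the functor-of-points descriptions of the projective bundles involved, and then identify the complement of the exceptional fibre via a direct graph-type argument. Working in the classical convention (so that $\mathbb{P}(\mathcal{F})$ parametrizes line subbundles), a morphism $Y \to \mathbb{P}(\mathcal{W}_{\mathbb{P}(\mathcal{E})})$ is the same data as a triple $(g\colon Y \to X,\ L' \hookrightarrow g^{*}\mathcal{E},\ K \hookrightarrow L' \oplus \SO_{Y})$, and the composition $K \hookrightarrow L' \oplus \SO_{Y} \hookrightarrow g^{*}(\mathcal{E} \oplus \SO)$ recovers the associated map $\pi$ into $\mathbb{P}(\mathcal{E} \oplus \SO)$. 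Under this translation, $\pi^{-1}(\mathbb{P}(\SO))$ is precisely the locus where $K$ factors through $\SO_{Y}$, which is exactly the section $\tilde{\iota}\colon \mathbb{P}(\SO_{\mathbb{P}(\mathcal{E})}) \hookrightarrow \mathbb{P}(\mathcal{W}_{\mathbb{P}(\mathcal{E})})$. This shows that diagram (\ref{eq:blowupPOPEO}) is cartesian and, since $\tilde{\alpha}$ is a $\mathbb{P}^{1}$-bundle, any of its sections is locally cut out by a single equation, making $\tilde{\iota}$ an effective Cartier divisor.

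Next, I would verify the universal property of the blow-up. Given $f\colon Y \to \mathbb{P}(\mathcal{E} \oplus \SO)$ classifying a line subbundle $L \hookrightarrow g^{*}(\mathcal{E} \oplus \SO)$ such that $D := f^{-1}(\mathbb{P}(\SO))$ is an effective Cartier divisor on $Y$, note that by construction of $\iota$ the divisor $D$ equals the zero scheme of the composition $\phi\colon L \hookrightarrow g^{*}(\mathcal{E} \oplus \SO) \twoheadrightarrow g^{*}\mathcal{E}$. The Cartier hypothesis means $\phi$ vanishes to exactly first order along $D$, so it factors uniquely as $L \xrightarrow{\cdot \sigma_{D}} L(D) \xrightarrow{\phi_{0}} g^{*}\mathcal{E}$ with $\phi_{0}$ nowhere vanishing, hence a line-subbundle inclusion. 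This determines a unique morphism $\bar{f}\colon Y \to \mathbb{P}(\mathcal{E})$ with $\bar{f}^{*}\SO(-1) \cong L(D)$, and the original inclusion refines to $L \hookrightarrow L(D) \oplus \SO_{Y} = \bar{f}^{*}\mathcal{W}_{\mathbb{P}(\mathcal{E})}$, producing the required unique lift $\tilde{f}\colon Y \to \mathbb{P}(\mathcal{W}_{\mathbb{P}(\mathcal{E})})$.

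For the ``moreover'' part, the complement $U \cong \mathbb{P}(\mathcal{W}_{\mathbb{P}(\mathcal{E})}) \setminus \mathbb{P}(\SO_{\mathbb{P}(\mathcal{E})})$ parametrizes line subbundles $K \hookrightarrow \SO(-1) \oplus \SO$ on $\mathbb{P}(\mathcal{E})$ for which the projection $K \to \SO(-1)$ is an isomorphism (this being exactly the condition $K \neq \SO$). Such a $K$ is the graph of a unique morphism $\SO(-1) \to \SO$, i.e.\ a section of $\SO_{\mathbb{P}(\mathcal{E})}(1)$. This identifies $U$ with the total space of $\SO_{\mathbb{P}(\mathcal{E})}(1)$ over $\mathbb{P}(\mathcal{E})$, completing the proof. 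The main delicate step is the factorization $\phi = \phi_{0} \cdot \sigma_{D}$ with $\phi_{0}$ genuinely a subbundle inclusion: this uses that the scheme-theoretic zero locus of $\phi$ is exactly $D$ rather than merely containing it, which rules out higher-order vanishing and is what guarantees both the existence and the uniqueness of the lift.
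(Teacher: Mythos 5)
Your proof is correct, but it takes a genuinely different route from the paper. The paper disposes of this proposition in two lines: by compatibility of blow-ups with pullbacks it reduces to $X$ affine and $\mathcal{E}$ free, and then cites the classical computation of the blow-up of a projective space (bundle) at a point (section) from Hartshorne, Chapter V, Example 2.11.4. You instead verify the universal property of the blow-up directly and globally, via the functor-of-points description of $\mathbb{P}(\mathcal{W}_{\mathbb{P}(\mathcal{E})})$: the key step is that a section $\phi\colon L\to g^{*}\mathcal{E}$ whose zero scheme is exactly an effective Cartier divisor $D$ factors uniquely as $\phi=\phi_{0}\circ(\cdot\,\sigma_{D})$ with $\phi_{0}\colon L(D)\to g^{*}\mathcal{E}$ nowhere vanishing, hence a line-subbundle inclusion defining the map to $\mathbb{P}(\mathcal{E})$ and the lift to $\mathbb{P}(\mathcal{W}_{\mathbb{P}(\mathcal{E})})$; the graph argument identifying $U$ with the total space of $\SO_{\mathbb{P}(\mathcal{E})}(1)$ is likewise the direct analogue of what Hartshorne's example gives locally. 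What your approach buys is a self-contained, coordinate-free proof valid over an arbitrary base without invoking the reduction step or an external example, and it mirrors the strategy the paper itself uses for Theorem \ref{theo:blowup}; what the paper's approach buys is brevity. Two points you should make explicit to be fully rigorous: (i) uniqueness of the lift requires noting that \emph{any} lift forces a factorization of $\phi$ through a line subbundle of $g^{*}\mathcal{E}$, and injectivity of $\cdot\,\sigma_{D}$ (as a sheaf map) then pins this subbundle down as $\phi_{0}(L(D))$; (ii) the identification of $\pi^{-1}(\mathbb{P}(\SO))$ with the section should be checked scheme-theoretically (the ideal of the preimage is generated by the composite $\SO_{\mathbb{P}(\mathcal{W}_{\mathbb{P}(\mathcal{E})})}(-1)\to\tilde{\alpha}^{*}\SO_{\mathbb{P}(\mathcal{E})}(-1)$, because $\SO_{\mathbb{P}(\mathcal{E})}(-1)\to q^{*}\mathcal{E}$ is fibrewise injective), not merely on points; both are routine and consistent with the argument you wrote.
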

The whole setting is depicted in the following picture
	\begin{equation}\label{eq:blowupPOPEO}\xymatrix{ \mathbb{P}(\SO) \ar@{^{(}->}[r]^-{\iota} & \mathbb{P}(\mathcal{E} \oplus \SO) & \ar[l]_-{v} \ar[dl]_-{\tilde{v}} U \ar[d]_-{\alpha} \\
		\mathbb{P}(\SO_{\mathbb{P}(\mathcal{E})}) \ar[u]^-{\tilde{\pi}} \ar@{^{(}->}[r]^-{\tilde{\iota}} & \mathbb{P}(\mathcal{W}_{\mathbb{P}(\mathcal{E})})  \ar[u]^-{\pi} \ar[r]^-{\tilde{\alpha}} & \mathbb{P}(\mathcal{E}) \,,
}\end{equation}
where $\alpha: = \tilde{\alpha} \circ \tilde{v}$. Set $B: = \mathbb{P}(\mathcal{W}_{\mathbb{P}(\mathcal{E})}) $ and $ E: =	\mathbb{P}(\SO_{\mathbb{P}(\mathcal{E})}) $. 
 \begin{proof}
By the compatibility of blow-ups and pullbacks, one can reduce to the case when $X$ is affine, and suppose that $\mathcal{E}$ is free. This case is done in \cite[Chapter V, Example 2.11.4]{Hartshorne}
 \end{proof}

	\subsection{Pushforward and pullback}
	Let $f: X \to Y$ be a proper morphism, and let $\LC$ be a line bundle on $Y$. Recall two functorial maps from \cite{CH11}: the pullback
	$$  f^* \colon W^i(Y, \mathcal{L}) \rightarrow W^i(X,  f^*\mathcal{L}) $$ and the pushforward $$f_* \colon  W^{i+\dim X}(X, f^* \mathcal{L} \otimes \omega_{X/Y}) \rightarrow  W^{i+ \dim Y}(Y, \mathcal{L}). $$ 

For properties of pushforward and pullback, we refer to \cite{CH11}.\ The following lemma is useful throughout several arguments in our paper: it is an analog of \cite[Proposition 6.7 (b)]{Fu}. 
\begin{lem}\label{lem:projblow}
Let $\pi : B \to Y$ be a proper birational morphism. Suppose that $ [\omega_\pi] = 1$ in $\Pic(B)/2$, and that $\pi_*(1_B) =1_Y$ in Witt groups. Then, $ \pi_* \circ  \mathrm{per} \circ  \pi^* (y) =  y $ for any $y \in W^i(Y,\LC)$. 
	\end{lem}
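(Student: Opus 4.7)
My approach is to reduce the identity to a one-line application of the projection formula, combined directly with the hypothesis $\pi_*(1_B) = 1_Y$.

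First, I would unpack the setup. Since $[\omega_\pi] = 1$ in $\Pic(B)/2$, choose a line bundle $\mathcal{M}$ on $B$ with $\mathcal{M}^{\otimes 2} \cong \omega_\pi$; by Lemma \ref{lem:twoperio}, the two-periodicity isomorphism $\mathrm{per}$ is then precisely multiplication by the canonical form $(\mathcal{M}, \varphi) \in W^0(B, \omega_\pi)$. Since $\pi$ is birational, $\dim B = \dim Y$, so the pushforward $\pi_*$ sends $W^i(B, \pi^*\LC \otimes \omega_\pi)$ to $W^i(Y, \LC)$ with no cohomological shift. In particular $(\mathcal{M}, \varphi) \cdot \pi^*(y)$ lives in the correct twisted Witt group to be in the domain of $\pi_*$.

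Next, for $y \in W^i(Y, \LC)$ I would compute
\[
\pi_*\bigl(\mathrm{per}(\pi^*(y))\bigr) \;=\; \pi_*\bigl((\mathcal{M}, \varphi) \cdot \pi^*(y)\bigr) \;=\; \pi_*((\mathcal{M}, \varphi)) \cdot y,
\]
the last equality being the projection formula from \cite{CH11}. The element $\pi_*((\mathcal{M}, \varphi)) = \pi_* \circ \mathrm{per}(1_B) \in W^0(Y)$ equals $1_Y$ by the hypothesis (which, with the twists restored, is precisely the assertion $\pi_*\circ\mathrm{per}(1_B) = 1_Y$). Hence the composite equals $1_Y \cdot y = y$, as desired.

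The only potential obstacle is the twist bookkeeping: one must confirm that all three Witt groups involved ($W^i(B, \pi^*\LC)$, $W^i(B, \pi^*\LC \otimes \omega_\pi)$, and $W^i(Y, \LC)$) match up under the identification $\omega_\pi \cong \mathcal{M}^{\otimes 2}$, and that the projection formula of \cite{CH11} applies with exactly this pattern of twists. Once that is checked, the proof is essentially the familiar oriented calculation $\pi_* \pi^* = \pi_*(1_B) \cdot (-) = \mathrm{id}$, transferred to the Witt setting via $\mathrm{per}$.
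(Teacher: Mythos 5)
Your proposal is correct and matches the paper's own argument: the paper likewise proves this in one line by writing $\pi_*\circ\mathrm{per}\circ\pi^*(y)=\pi_*\circ\mathrm{per}(1_B\cdot\pi^*(y))=\pi_*(1_B)\cdot y=y$ via the projection formula of \cite[Theorem 6.5]{CH11} and the hypothesis $\pi_*(1_B)=1_Y$. Your extra care in unpacking $\mathrm{per}$ as multiplication by the canonical form and checking the twist bookkeeping is just a more explicit version of the same computation.
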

	\begin{proof}
	By the projection formula \cite[Theorem 6.5]{CH11}, we see that $\pi_* \circ \mathrm{per} \circ \pi^*(y) = \pi_* \circ \mathrm{per} \circ (1_B \cdot \pi^*(y)) = \pi_*(1_B) \cdot y = y$. 
	\end{proof}
\begin{remark} If $B$ is the blow up of $Y$ along a regular closed subscheme $Z$, then the condition $\pi_*(1_B) = 1_Y$ is satisfied, cf.\ \cite[Proposition 3.15]{BC12}. 
\end{remark}
\subsection{Pushforward and projective bundles} For the case of the projective bundle $q: \mathbb{P}(\mathcal{E})\to X$, the exact sequence 
	%Hartshorne Theorem 8.13
	\[\xymatrix{ 0\ar[r] &\Omega_{\mathbb{P}(\mathcal{E})/X} \ar[r] & q^* \mathcal{E}^\vee \otimes \SO(-1) \ar[r] & \SO_{\mathbb{P}(\mathcal{E})} \ar[r] & 0}\]
	yields a canonical isomorphism $\omega_{\mathbb{P}(\mathcal{E})/X} : = \det(\Omega_{\mathbb{P}(\mathcal{E})/X}) \cong q^* \det \mathcal{E}^\vee \otimes \SO(-r-1).$
	Therefore, we are able to rewrite the pushforward as $$q_* \colon  W^i(\mathbb{P}(\mathcal{E}), q^* \mathcal{L} \otimes \det\mathcal{E}^\vee \otimes \SO(-r-1)) \rightarrow  W^{i-r}(X, \mathcal{L}).$$

	\subsection{Euler class} 
	Let $\mathcal{L}$ be a line bundle on $X$. Let $\rho:\mathcal{V} \to X$ be a vector bundle of rank $d$ and let $\nu: X \to \mathcal{V}$ be its zero section. The \textit{Euler class} of $\mathcal{V}$ is the following composition of maps 
	\[\xymatrix{ W^i(X, \mathcal{L}) \ar[r]^-{\nu_*} & W^{i+d} (\mathcal{V}, \rho^*(\det \mathcal{V}^\vee \otimes \mathcal{L})) \ar[r]^-{(\rho^*)^{-1}} & W^{i+d} (X, \det\mathcal{V}^\vee \otimes \mathcal{L}) \,,} \]
	which shall be denoted by $e(\mathcal{V})$ (cf. \cite{Fasel09} or \cite[Section 3]{Fasel13} for details).\ For the relations between pushforwards, pullbacks and Euler classes, we refer to \cite{Fasel09} (see also \cite{Fasel13}), and the excess intersection formula plays an important role in this paper.

	\subsection{Witt groups of projective bundles}
	We use notations as in Section \ref{sec:projective bundles}. 
	\begin{theo}[Theorem 1.2 \cite{Walter}]\label{thm:projectivebundleevendimoddtwist}
		Assume that $r$ is an even number. The maps of groups
		\[ \xymatrix{  q_*\circ \mathrm{per}  \colon  W^i(\mathbb{P}(\mathcal{E}), \SO(-1)) \ar@<0.5ex>[r]  & \ar@<0.5ex>[l] W^{i-r}(X, \det \mathcal{E}) \colon    e(\mathcal{Q})\circ q^*  }\]
		are inverse isomorphisms.
	\end{theo}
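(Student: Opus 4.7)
The plan is to prove the two compositions are mutual inverses, handling each direction separately, with the harder direction argued by induction on the rank $r+1$ of $\mathcal{E}$ in steps of two (so that the parity of $r$ is preserved) using the blow-up diagram (\ref{eq:blowupPOPEO}) as the geometric engine.

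For the composition $q_*\circ\mathrm{per}\circ e(\mathcal{Q})\circ q^{*}=\mathrm{id}$ on $W^{i-r}(X,\det\mathcal{E})$, I would appeal to the projection formula \cite[Theorem 6.5]{CH11}, which gives $q_*\bigl(\mathrm{per}(e(\mathcal{Q}))\cdot q^{*}(x)\bigr)=q_*\bigl(\mathrm{per}(e(\mathcal{Q}))\bigr)\cdot x$. The problem then reduces to the universal identity $q_*(\mathrm{per}(e(\mathcal{Q})))=1\in W^{0}(X,\SO)$. This is a statement about the top Euler class of the universal quotient: by flat base change it can be checked after trivializing $\mathcal{E}$ over a point, where $e(\mathcal{Q})$ is identified with $\nu_*(1)$ for the zero section $\nu$ via the Koszul resolution, and the pushforward along $\mathbb{P}^{r}_{k}\to\operatorname{Spec} k$ of this class is the classical unit.

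For the converse $e(\mathcal{Q})\circ q^{*}\circ q_*\circ\mathrm{per}=\mathrm{id}$, the base case $r=0$ is immediate: $\mathcal{E}$ is a line bundle, $\mathbb{P}(\mathcal{E})=X$, $\mathcal{Q}=0$, $\SO(-1)\cong\mathcal{E}\cong\det\mathcal{E}$, and both maps collapse to the identity of $W^{i}(X,\mathcal{E})$. For the inductive step I would apply the dévissage'd twelve-term localization sequence of Section 3 to the closed embedding $\iota:X=\mathbb{P}(\SO)\hookrightarrow\mathbb{P}(\mathcal{E}\oplus\SO)$ from diagram (\ref{eq:blowupPOPEO}), with twist $\SO(-1)$, using $\omega_{\iota}\cong\det\mathcal{E}^{\vee}$ together with the identification $\iota^{*}\SO(-1)\cong\SO_{X}$, and the homotopy-invariance isomorphism $W^{*}(U,v^{*}\SO(-1))\cong W^{*}(\mathbb{P}(\mathcal{E}),\SO(-1))$ coming from the affine bundle $\alpha:U\to\mathbb{P}(\mathcal{E})$. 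Since the codimension $r+1$ is odd, case (B) of \cite[Theorem 1.4]{BC} applies, describing the connecting homomorphism explicitly in terms of the blow-up $B=\mathbb{P}(\mathcal{W}_{\mathbb{P}(\mathcal{E})})$ and its exceptional fiber $E\cong\mathbb{P}(\mathcal{E})$ via the excess intersection formula \cite{Fasel09} and Lemma \ref{lem:projblow}. Invoking the parallel projective-bundle statement for odd relative dimension (Theorem \ref{thm:projectivebundleodddimeventwist} applied to $\mathcal{E}\oplus\SO$, to be established simultaneously by the same induction), a diagram chase then forces $e(\mathcal{Q})\circ q^{*}$ to be surjective, hence an isomorphism once we know from the first direction that it is already a split monomorphism.

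The main obstacle I anticipate is the careful bookkeeping of line bundle twists as one moves through the blow-up diagram: the pullbacks of $\SO(-1)$ along $\iota,\tilde{\iota},v,\tilde{v},\alpha,\pi$ are distinct line bundles that coincide only modulo squares, and the various two-periodicity isomorphisms $\mathrm{per}$ must be placed at precisely the right positions for the diagrams to commute on the nose rather than merely up to a chosen trivialization. A secondary subtlety is the verification that $q_*(\mathrm{per}(e(\mathcal{Q})))=1$: although geometrically natural, it requires either an honest Koszul-type computation on $\mathbb{P}(\mathcal{E})$ or a descent argument from a point where $\mathcal{E}$ is free, both of which must be performed respecting the symmetric-bilinear structure throughout.
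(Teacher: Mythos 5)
Your proposal correctly identifies the geometric engine (the blow-up of $\mathbb{P}(\mathcal{E}\oplus\SO)$ along $\mathbb{P}(\SO)$, the excess intersection formula, Lemma \ref{lem:projblow}), but it has two genuine gaps. First, your ``easy'' direction reduces to the identity $q_*(\mathrm{per}(e(\mathcal{Q})))=1$ in $W^0(X)$, and the proposed verification --- flat base change after ``trivializing $\mathcal{E}$ over a point'' --- does not work: restriction to points (or to a trivializing cover) is not injective on Witt groups, and no splitting principle is available to reduce an arbitrary $\mathcal{E}$ to the trivial case, so the reduction only handles trivial bundles. Note that this identity is essentially equivalent to the direction of the theorem you are trying to prove, so assuming a pointwise check suffices begs the question. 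The paper sidesteps this entirely: the only ``unit pushforward'' it needs is $\pi_*(1_B)=1$ for the \emph{birational} blow-up morphism $\pi\colon B=\mathbb{P}(\mathcal{W}_{\mathbb{P}(\mathcal{E})})\to\mathbb{P}(\mathcal{E}\oplus\SO)$, which is known (cf.\ the remark after Lemma \ref{lem:projblow}, citing \cite[Proposition 3.15]{BC12}), and then the composite $\pi_*\circ\mathrm{per}\circ\pi^*=\mathrm{id}$ follows from the projection formula.

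Second, your hard direction rests on a ``simultaneous induction'' that is not well-founded. To prove the even case for $\mathcal{E}$ of rank $r+1$ you invoke Theorem \ref{thm:projectivebundleodddimeventwist} for $\mathcal{E}\oplus\SO$, i.e.\ for a bundle of rank $r+2$ with odd relative dimension $r+1$; but that odd-dimensional statement is itself deduced (as in the paper) from the even-dimensional statement at rank $r+2$ or higher, so the required input moves to \emph{larger} rank at each step and the induction never closes. The paper's actual argument needs no induction and no appeal to Theorem \ref{thm:projectivebundleodddimeventwist} (which comes afterwards as a consequence): after reducing, via the ladder of split localization sequences and the excess intersection formula (which identifies the relevant vertical composite with $e(\mathcal{Q})\circ\tilde{\pi}^*$), to the two identities $\pi^*\circ\pi_*\circ\mathrm{per}=\mathrm{id}$ and $\pi_*\circ\mathrm{per}\circ\pi^*=\mathrm{id}$, it proves the first by showing directly that $\pi^*$ is an isomorphism, using the second cartesian square (\ref{eq:PEPEO}) and the split exactness of Lemma \ref{lem:chowmoving}, which rests on Nenashev's published projective bundle computations. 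If you want to salvage your outline, you should either prove $q_*(\mathrm{per}(e(\mathcal{Q})))=1$ honestly (e.g.\ by the self-intersection formula for the section $X\hookrightarrow\mathbb{P}(\mathcal{E}\oplus\SO)$ and a blow-up argument, which is in effect what the paper does) or replace the induction by the paper's direct proof that $\pi^*$ is invertible.
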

	\begin{proof}
	By Proposition \ref{prop:blowupproj}, \cite[Setup 1.1 and Hypothesis 1.2]{BC} is satisfied.\ By \cite{CH11}, we have a pushforward map $ \pi_*: W^i(B, \omega_\pi) \rightarrow W^i(\mathbb{P}(\mathcal{E} \oplus \SO) ) $. Note that by \cite[Proposition 2.1]{BC} one has $\omega_\pi = (0, r) \in  \Pic(\mathbb{P}(\mathcal{E} \oplus \SO ))\oplus \mathbb{Z} \cong \Pic(B) $. Since $r$ is even, we have a periodicity isomorphism $\mathrm{per}: W^i(B) \rightarrow W^i(B, \omega_\pi)$. Form the following morphisms of short exact sequences coming from localization sequences
		\[ \xymatrix{
			0 \ar[r] & W^{i-1}(E, \omega_{\tilde{\iota}} ) \ar[d]^-{\mathrm{per}} \ar[r]^-{(\tilde{\iota})_*}  & W^i(B)      \ar[r]^-{\tilde{v}^*} \ar[d]^-{ \mathrm{per}} & W^i(U) \ar[d]^-{=} \ar[r]& 0
			\\ %%%%%%%%%%%%
			0 \ar[r] & W^{i-1}(E, \omega_{\tilde{\iota}} \otimes \tilde{\iota}^* \omega_\pi ) \ar[d]^-{\tilde{\pi}_* } \ar[r]^-{(\tilde{\iota})_*}  & W^i(B, \omega_\pi)      \ar[r]^-{\tilde{v}^*} \ar[d]^-{\pi_*} & W^i(U) \ar[d]^-{=} \ar[r]& 0
			\\ %%%%%%%%%%%%
			0 \ar[r] & W^{i-1-r}(X, \omega_{\iota}  ) \ar[d]^-{ e(\mathcal{Q}) \circ \tilde{\pi}^*} \ar[r]^-{(\iota)_*}  & W^i(\mathbb{P}(\mathcal{E} \oplus\SO))      \ar[r]^-{v^*} \ar[d]^-{\pi^*} & W^i(U)  \ar[d]^-{=} \ar[r]& 0
			\\%%%%%%%%
			0 \ar[r] & W^{i-1}(E, \omega_{\tilde{\iota}})  \ar[r]^-{(\tilde{\iota})_*}  & W^i(B)      \ar[r]^-{\tilde{v}^*}  & W^i(U) \ar[r]& 0  .}\]
		Here all the squares are commutative and in particular the commutativity of the lower left diagram follows by the excess intersection formula (cf. \cite{Fasel09}). (Note here that $\pi$ is of finite Tor-dimension, since $\pi$ is the composition $B \hookrightarrow \mathbb{P}(\mathcal{E} \oplus \SO) \times \mathbb{P}(\mathcal{E}) \to  \mathbb{P}(\mathcal{E} \oplus \SO)$, where the first morphism is a regular immersion and the second morphism is the projection.)\ All the short exact sequences are split, by the computation of Witt groups of projective bundles (cf. \cite{Ne}). Recall the isomorphism $\omega_{\tilde{\iota}} \cong \SO_{E}(-1)$ (cf. \cite[Appendix A]{BC}). The excess normal bundle of the left cartesian square of Diagram \ref{eq:blowupPOPEO} is the universal quotient bundle $\mathcal{Q}$ on $ \mathbb{P}(\mathcal{E}) \cong E$. Theorem \ref{thm:projectivebundleevendimoddtwist} will follow if we can prove that 
		$  \pi^* \circ \pi_* \circ \mathrm{per} = \mathrm{id} \textnormal{ and }  \pi_* \circ \mathrm{per} \circ \pi^* = \mathrm{id}. $

Note that $ \pi_* \circ \mathrm{per} \circ \pi^* = \mathrm{id}$ by Lemma \ref{lem:projblow}. It is therefore enough to prove that $\pi^*$ is an isomorphism. Form the cartesian diagram
		\begin{equation}\label{eq:PEPEO}
			\xymatrix{ \mathbb{P}(\mathcal{E})  \ar[r]^-{\iota'} & \mathbb{P}(\mathcal{E} \oplus \SO) 
				\\%%%%%%%%%
				\mathbb{P}(\mathcal{\SO}_{\mathbb{P}(\mathcal{E})}(-1))  \ar[u]^-{g}_-{\cong} \ar[r]^-{\tilde{\iota}'} &  B \ar[u]^{\pi},
				\\ %%%%%%%%
		}\end{equation}
 where $g$ is an isomorphism since $\iota'$ factors through $U$. By applying the localization theorem to $\iota'$ and $\tilde{\iota}'$, we obtain the following morphism of split exact sequences of Witt groups (cf. Lemma \ref{lem:chowmoving})
		\begin{equation}\label{eq:projectivebundle-even-bl} \xymatrix{ 
				0 \ar[r] & W^{i-1}(\mathbb{P}(\mathcal{E}), \omega_{\iota'}  ) \ar[d]^-{g^*}_-{\cong} \ar[r]^-{(\iota')_*}  & W^i(\mathbb{P}(\mathcal{E} \oplus\SO))      \ar[r]^-{\iota'^*} \ar[d]^-{\pi^*} & W^i(\mathbb{P}(\mathcal{E}))  \ar[d]^-{g^*}_-{\cong} \ar[r]& 0
				\\%%%%%%%%
				0 \ar[r] & W^{i-1}(\mathbb{P}(\mathcal{\SO}_{\mathbb{P}(\mathcal{E})}(-1)) , \omega_{\tilde{\iota}'} ) \ar[r]^-{(\tilde{\iota}')_*}  & W^i(B)      \ar[r]^-{\tilde{\iota}'^*} & W^i(\mathbb{P}(\mathcal{\SO}_{\mathbb{P}(\mathcal{E})}(-1)) )  \ar[r]& 0. }\end{equation}
The left square is commutative by \cite{Fasel09} (recall that $\pi$ is of finite Tor-dimension and note that the excess normal bundle is trivial).\ This allows us to conclude that the map $\pi^*$ is an isomorphism.
	\end{proof}
	
	\begin{lem}\label{lem:chowmoving}
Suppose that $0 \to \mathcal{E} \to \mathcal{E}' \to \mathcal{L} \to 0$ is an exact sequence with $\mathcal{L}$ a line bundle on $X$. Let $\iota: \mathbb{P}(\mathcal{E}) \to \mathbb{P}(\mathcal{E}')$ be the induced closed embedding. Then, the sequence
		\[ \xymatrix{0 \ar[r] & W^{i-1}(\mathbb{P}(\mathcal{E}), \omega_{\iota}) \ar[r]^-{\iota_*} &W^{i}(\mathbb{P}(\mathcal{E}')) \ar[r]^-{\iota^*} & W^i(\mathbb{P}(\mathcal{E})) \ar[r] & 0   } \]
		is split exact. 
	\end{lem}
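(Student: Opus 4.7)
The plan is to combine the 12-term localization sequence with homotopy invariance and then identify the resulting sequence using the projective bundle formula. Set $U := \mathbb{P}(\mathcal{E}') \setminus \mathbb{P}(\mathcal{E})$ with open inclusion $v: U \hookrightarrow \mathbb{P}(\mathcal{E}')$. First I would observe that $U$ is naturally an affine bundle over $X$: a point in the fiber $U_x$ corresponds to a line in $\mathcal{E}'_x$ not contained in $\mathcal{E}_x$, equivalently a splitting of $0 \to \mathcal{E}_x \to \mathcal{E}'_x \to \mathcal{L}_x \to 0$; globally $U \to X$ is a torsor under $\mathcal{E} \otimes \mathcal{L}^\vee$. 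By homotopy invariance, $W^i(U) \cong W^i(X)$.

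Next I would apply the localization theorem together with dévissage for the codimension-one regular embedding $\iota$ to obtain
\[ \cdots \to W^{i-1}(U) \xrightarrow{\partial} W^{i-1}(\mathbb{P}(\mathcal{E}), \omega_\iota) \xrightarrow{\iota_*} W^i(\mathbb{P}(\mathcal{E}')) \xrightarrow{v^*} W^i(U) \xrightarrow{\partial} \cdots. \]
The pullback $q'^{*}: W^i(X) \to W^i(\mathbb{P}(\mathcal{E}'))$ along the structure morphism $q': \mathbb{P}(\mathcal{E}') \to X$ satisfies $v^*\circ q'^{*} = (q'\circ v)^*$, which under the homotopy-invariance identification $W^i(U) \cong W^i(X)$ is the identity. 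Hence $v^*$ is split surjective, every connecting map $\partial$ vanishes, and we obtain the split short exact sequence
\[ 0 \to W^{i-1}(\mathbb{P}(\mathcal{E}), \omega_\iota) \xrightarrow{\iota_*} W^i(\mathbb{P}(\mathcal{E}')) \xrightarrow{v^*} W^i(X) \to 0. \]

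The remaining step is to upgrade this to the sequence stated in the lemma, replacing $v^*$ by $\iota^*$ and the target $W^i(X)$ by $W^i(\mathbb{P}(\mathcal{E}))$. For surjectivity of $\iota^*$, I would invoke the Walter--Nenashev projective bundle formula: $W^*(\mathbb{P}(\mathcal{E}))$ is generated as a $W^*(X)$-module by classes that lift to $\mathbb{P}(\mathcal{E}')$ under $\iota^*$, namely pullbacks from $X$ (lifting via $\iota^* q'^{*} = q^*$) and Euler-type generators built from $\mathcal{O}(1)$ (lifting via $\iota^*\mathcal{O}_{\mathbb{P}(\mathcal{E}')}(1) = \mathcal{O}_{\mathbb{P}(\mathcal{E})}(1)$). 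The vanishing $\iota^*\iota_* = 0$ would follow from the excess intersection formula of Fasel applied to the self-intersection diagram, which identifies this composition with multiplication by the Witt-theoretic Euler class of the normal bundle $\mathcal{N}_\iota = q^*\mathcal{L}\otimes\mathcal{O}(1)$; the relevant twisted Witt group is controlled by Walter's computation. The main obstacle is precisely this last step: bridging the naturally obtained $v^*$-sequence and the desired $\iota^*$-sequence requires careful bookkeeping of the Walter--Nenashev decompositions of $W^*(\mathbb{P}(\mathcal{E}))$ and $W^*(\mathbb{P}(\mathcal{E}'))$ together with the behavior of the Witt-theoretic Euler class under pullback by $\iota$.
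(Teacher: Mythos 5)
Your first half (localization for $\iota$, the open complement $U$ being an affine bundle over $X$, homotopy invariance, and the splitting of $v^*$ by $q'^*\circ(p^*)^{-1}$, giving injectivity of $\iota_*$ and the decomposition $W^i(\mathbb{P}(\mathcal{E}'))\cong \operatorname{im}\iota_*\oplus q'^*W^i(X)$) matches the paper's proof. But the step you yourself flag as "the main obstacle" -- passing from the $v^*$-sequence to the $\iota^*$-sequence -- is exactly where the paper does its real work, and your proposed justifications for it do not hold up. For surjectivity of $\iota^*$ the paper does not argue generator-by-generator: it quotes that $q^*\colon W^i(X)\to W^i(\mathbb{P}(\mathcal{E}))$ is an \emph{isomorphism} (Nenashev, Theorem 3.6, in the situation at hand), so that $\iota^*\circ q'^*\circ(q^*)^{-1}=\mathrm{id}$ gives an explicit right splitting of $\iota^*$; combined with the decomposition above and injectivity of $q^*$ this also yields $\ker\iota^*\subseteq\operatorname{im}\iota_*$, a step you never address. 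Your alternative -- that the extra Walter--Nenashev generators are "Euler-type generators built from $\mathcal{O}(1)$" which lift because $\iota^*\mathcal{O}_{\mathbb{P}(\mathcal{E}')}(1)=\mathcal{O}_{\mathbb{P}(\mathcal{E})}(1)$ -- is not correct in Witt theory: Euler classes of line bundles vanish there, and the genuine extra classes (when they exist) are of pushforward/Koszul type, built from the universal quotient bundle, whose restriction along $\iota$ is precisely the delicate point; so this does not substitute for the isomorphism statement for $q^*$.

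Similarly, for $\iota^*\iota_*=0$ you correctly identify the composition with multiplication by the Euler class of the normal line bundle $q^*\mathcal{L}\otimes\mathcal{O}(1)$ via the self-intersection formula, but the reason you offer for its vanishing ("the relevant twisted Witt group is controlled by Walter's computation") is wrong in general: the target group $W^{i+1}(\mathbb{P}(\mathcal{E}),\omega_\iota\otimes-)$ is typically nonzero (e.g.\ when $\mathcal{E}$ has odd rank it is isomorphic to a Witt group of $X$ by the twisted projective bundle theorem). The paper's argument is different and robust: the Euler class of a line bundle is \emph{hyperbolic} (Fasel--Srinivas, Proposition 14), hence acts as zero on Witt groups irrespective of whether the ambient group vanishes. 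So while your skeleton is the right one, the two decisive inputs -- the isomorphism $q^*$ from Nenashev and the hyperbolicity of Euler classes of line bundles -- are missing or misattributed, and the proof as proposed is incomplete.
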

	\begin{proof}
		Let $q: \mathbb{P}(\mathcal{E}) \to X$ and $q':\mathbb{P}(\mathcal{E}') \to X$ be the projections. Let $U : = \mathbb{P}(\mathcal{E}') - \mathbb{P}(\mathcal{E}) $.\ Note that $U$ is an affine bundle over $X$, and let $p: U \to X$ be the projection. By \cite[Section 5]{Ne}, the exact sequence
		\[ \xymatrix{0 \ar[r] & W^{i-1}(\mathbb{P}(\mathcal{E}), \omega_{\iota}) \ar[r]^-{\iota_*} &W^{i}(\mathbb{P}(\mathcal{E}')) \ar[r]^-{v^*} & W^{i}(U) \ar[r] & 0   } \]
		splits on the right by the map $ q'^* \circ (p^*)^{-1}$. By \cite[Theorem 3.6]{Ne}, we also know that $q^*: W^i(X) \to W^i(\mathbb{P}(\mathcal{E}))$ is an isomorphism. Therefore one has  $   \iota^* \circ   q'^* \circ (q^*)^{-1} =  \mathrm{id}$ and $\iota^*$ is split surjective. 
		
	It is enough to show that $\iota^*\circ \iota_* =0 $.  By the self-intersection formula \cite[Theorem 33]{Fasel09}, we see that $\iota^* \circ \iota_* = e(\omega_{\iota}) $. Note that the Euler class $e(\omega_{\iota})$ is hyperbolic \cite[Proposition 14]{FS}, since $\omega_{\iota}$ is a line bundle. Therefore,  $\iota^* \circ \iota_* =0 $. The result follows. 
	\end{proof}
	\begin{remark}	
		%	Nenashev \cite[Theorem 5.1]{Ne} 
		%	considers two long exact sequences
		%
		%\[     \xymatrix{ \cdots \ar[r] &  W^i_{\mathbb{P}(\SO)}(\mathbb{P}(\mathcal{E}\oplus \SO)) \ar[r] & W^i(\mathbb{P}(\mathcal{E} \oplus \SO) )  \ar[r]  &  W^i(U)  \ar[r] &   W^{i+1}_{\mathbb{P}(\SO)}(\mathbb{P}(\mathcal{E}\oplus \SO))   \ar[r]  & \cdots }\]
		%and
		%\[     \xymatrix{ \cdots \ar[r] &  W^i_{\mathbb{P}(\mathcal{E})}(\mathbb{P}(\mathcal{E} \oplus \SO) ) \ar[r] & W^i(\mathbb{P}(\mathcal{E} \oplus \SO) )  \ar[r]  &  W^i(U' )  \ar[r] &   W^{i+1}_{\mathbb{P}(\mathcal{E} )}(\mathbb{P}(\mathcal{E} \oplus \SO))   \ar[r]  & \cdots }. \]
		
		%Since the canonical maps $\alpha: U \rightarrow\mathbb{P}(\mathcal{E})  $
		%and
		%$\alpha':U'  \rightarrow \mathbb{P}(\SO) $ are both affine bundles, the above two long exact sequences split, by using that $q^*: W^i(X) \rightarrow W^i(\mathbb{P}(\mathcal{E})) $ is an isomorphism. Since $W^i_{\mathbb{P}(\SO)}(\mathbb{P}(\mathcal{E} \oplus \SO)) \cong W^{i-(r+1)} (X, \det \mathcal{E}) $ and $W^{i-1}(\mathbb{P}(\mathcal{E}),\SO(1)) \cong W^i_{\mathbb{P}(\mathcal{E})}(\mathbb{P}(\mathcal{E} \oplus \SO))$, 
		%	by the d\'{e}vissage theorem and the  two periodicity on the twists, 
		In \cite[Theorem 5.1]{Ne} Nenashev is able to show that $W^{i-r} (X, \det \mathcal{E})$ and $W^i(\mathbb{P}(\mathcal{E}), \SO(1))$ are isomorphic, however it is more subtle to gain an understanding of the intermediate maps whose composition gives rise to this isomorphism, and this is the precisely the purpose of Theorem \ref{thm:projectivebundleevendimoddtwist}. 
	\end{remark}
	
	\begin{theo}[Theorem 1.4 \cite{Walter}]\label{thm:projectivebundleodddimeventwist}
		Assume that $r$ is an odd number. The sequence of maps of groups
		\[ \xymatrix{    \cdots  \ar[r] & W^i(X) \ar[r]^-{q^*} & W^i(\mathbb{P}(\mathcal{E})) \ar[r]^-{q_* \circ \mathrm{per}} & W^{i-r}(X, \det \mathcal{E}) \ar[r]^-{e(\mathcal{E})}  & W^{i+1}(X) \ar[r] & \cdots }\]
		is exact.
	\end{theo}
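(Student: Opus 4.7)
The plan is to argue in parallel with Theorem \ref{thm:projectivebundleevendimoddtwist}, using the blow-up setup of diagram (\ref{eq:blowupPOPEO}), but now taking $r$ odd so that two crucial phenomena flip: the ambient projective bundle $q'\colon \mathbb{P}(\mathcal{E}\oplus \mathcal{O})\to X$ has even relative dimension $r+1$, while the class $[\omega_\pi] = r[E] \in \Pic(B)/2$ becomes nontrivial, ruling out the periodicity isomorphism $\mathrm{per}\colon W^i(B)\to W^i(B,\omega_\pi)$ that powered the even-case proof.

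First I would write out the 12-term localization sequence for $\iota\colon X=\mathbb{P}(\mathcal{O})\hookrightarrow \mathbb{P}(\mathcal{E}\oplus \mathcal{O})$. The normal bundle is $\mathcal{E}$ (codimension $r+1$), so $\omega_\iota = \det\mathcal{E}^\vee$; by homotopy invariance along the affine bundle $\alpha\colon U\to \mathbb{P}(\mathcal{E})$ the term $W^i(U)$ becomes $W^i(\mathbb{P}(\mathcal{E}))$, and by periodicity on the square $(\det\mathcal{E})^{\otimes 2}$ the twist $\det\mathcal{E}^\vee$ can be traded for $\det\mathcal{E}$ (cf.\ Lemma \ref{lem:twoperio}). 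The resulting sequence
\[\cdots \to W^{i-r-1}(X,\det\mathcal{E}) \xrightarrow{\iota_*\,\mathrm{per}} W^i(\mathbb{P}(\mathcal{E}\oplus\mathcal{O})) \xrightarrow{(\alpha^*)^{-1}v^*} W^i(\mathbb{P}(\mathcal{E})) \xrightarrow{\mathrm{per}\,\partial} W^{i-r}(X,\det\mathcal{E}) \to \cdots\]
already has the shape of the announced long exact sequence. It remains to produce a splitting $W^i(\mathbb{P}(\mathcal{E}\oplus\mathcal{O})) \cong W^i(X) \oplus (\text{kernel of } (\alpha^*)^{-1}v^*)$ under which the induced map on the first summand becomes $q^*$, the map $\iota_*\mathrm{per}$ becomes $e(\mathcal{E})$, and the connecting map becomes $q_*\,\mathrm{per}$.

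For the first identification, note that $q\circ\alpha = q'\circ v$, so the composition along $q'^*\colon W^i(X)\to W^i(\mathbb{P}(\mathcal{E}\oplus\mathcal{O}))$ followed by $(\alpha^*)^{-1}v^*$ is $q^*$. For the Euler class, I would invoke the self-intersection formula $\iota^*\iota_* = e(\mathcal{E})$, and use $q'\circ\iota = \mathrm{id}_X$ to verify that the $q'^*W^i(X)$-component of $\iota_*\mathrm{per}$ coincides with $e(\mathcal{E})$ up to the relevant periodicities. The subtle identification of the connecting map $\partial$ with $q_*\mathrm{per}$ I would obtain by passing to the blow-up: the map $\pi^*\colon W^i(\mathbb{P}(\mathcal{E}\oplus \mathcal{O}))\to W^i(B)$ is an isomorphism by the argument of diagram (\ref{eq:projectivebundle-even-bl}) (which is parity-insensitive), while the excess intersection formula applied to the left Cartesian square of (\ref{eq:blowupPOPEO})---whose excess bundle is the universal quotient $\mathcal{Q}$ on $E = \mathbb{P}(\mathcal{E})$, i.e.\ the cokernel of $\SO_E(-1)\hookrightarrow \tilde{\pi}^*\mathcal{E}$---together with the transparent connecting map on the $B$-side codimension-one embedding $\tilde{\iota}$ and the identification $\tilde{\pi}=q$, expresses $\partial$ in terms of $q_*$ and $\mathrm{per}$.

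The principal obstacle is precisely this last identification of $\partial$: without the periodicity $W^i(B)\cong W^i(B,\omega_\pi)$ of the even case, Lemma \ref{lem:projblow} does not apply and $\pi_*$ is not an inverse to $\pi^*$; the clean commutative diagram chase of Theorem \ref{thm:projectivebundleevendimoddtwist} must be replaced by a more delicate comparison of the two localization sequences---in $\mathbb{P}(\mathcal{E}\oplus \mathcal{O})$ and in $B$---glued through $\pi^*$ and the excess intersection formula. A secondary but persistent challenge, shared with the even case, is the consistent tracking of line-bundle twists ($\det\mathcal{E}^\vee$ versus $\det\mathcal{E}$ via Lemma \ref{lem:twoperio}; the $\SO(-r-1)$ absorbed in the definition of $q_*$; and the various $\SO(-1)$ on $E$ and $B$) throughout the chase.
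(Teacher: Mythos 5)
Your outline is built on the complementary localization to the one actually used: you localize along the codimension-$(r+1)$ zero section $\iota\colon X\hookrightarrow\mathbb{P}(\mathcal{E}\oplus\SO)$ with untwisted coefficients, whereas the paper localizes along the codimension-one embedding $\iota'\colon\mathbb{P}(\mathcal{E})\hookrightarrow\mathbb{P}(\mathcal{E}\oplus\SO)$ with the twist $\SO(-1)$ on the ambient space. That choice is not cosmetic: with the $\SO(-1)$ twist the ambient group $W^{i+1}(\mathbb{P}(\mathcal{E}\oplus\SO),\SO(-1))$ is computed by the already-proven even-dimensional Theorem \ref{thm:projectivebundleevendimoddtwist} (applied to $p\colon\mathbb{P}(\mathcal{E}\oplus\SO)\to X$, of even relative dimension $r+1$), the complement of $\iota'$ is a vector bundle over $X$, and the connecting homomorphism is identified by \cite[Lemma 4.2 (B)]{BC}; the remaining squares only need $p_*\circ\iota'_*=q_*$ and naturality of Euler classes ($v'^*\mathcal{G}\cong\alpha^*\mathcal{E}$). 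No blow-up is needed at all in the odd case.

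In your route the two decisive steps are left unproven, and the sketch you give for them does not go through. First, to convert your localization sequence into the stated one you need the untwisted groups $W^i(\mathbb{P}(\mathcal{E}\oplus\SO))$ together with explicit maps (your ``splitting'' $W^i(\mathbb{P}(\mathcal{E}\oplus\SO))\cong W^i(X)\oplus\ker\bigl((\alpha^*)^{-1}v^*\bigr)$, and the identification of $\iota_*\circ\mathrm{per}$ with $e(\mathcal{E})$ through it). This is precisely an even-relative-dimension, even-twist projective bundle statement with explicit generators; it is not established anywhere in the paper (Theorem \ref{thm:projectivebundleevendimoddtwist} covers only the $\SO(-1)$ twist, and Theorem \ref{thm:Walter-Nenashev} neither applies to $\mathcal{E}\oplus\SO$ under its parity hypotheses nor identifies the maps), so you would have to prove it from scratch, which is a task of the same nature as the theorem itself. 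Second, you concede that the identification of the connecting map with $q_*\circ\mathrm{per}$ is ``the principal obstacle'', and the mechanism you propose for it is flawed: the argument of diagram (\ref{eq:projectivebundle-even-bl}) is not parity-insensitive, because the splitness of its rows rests on Lemma \ref{lem:chowmoving}, whose proof uses that $q^*\colon W^i(X)\to W^i(\mathbb{P}(\mathcal{E}))$ is an isomorphism --- true only in even relative dimension, and false for odd $r$ (its failure is exactly what the present theorem quantifies). Likewise, with $[\omega_\pi]$ nontrivial, Lemma \ref{lem:projblow} is unavailable, so neither $\pi^*$ nor $\pi_*$ can be handled as in the even case; for a codimension-$(r+1)$ center the geometric computation of $\partial$ would force you back into the full blow-up machinery, with circular demands on the Witt groups of $B$. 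So as it stands the proposal is a plan with its central step missing, rather than a proof; the paper's choice of the codimension-one localization with the odd twist is what makes the argument close.
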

	\begin{proof} Let $p: \mathbb{P}(\mathcal{E} \oplus \SO) \to X $ be the canonical projection. Consider the exact sequence
		\begin{equation}\label{eq:exactseqonPOE}\xymatrix{ 0\ar[r] &\SO_{\mathbb{P}(E\oplus\SO)}(-1) \ar[r] & p^* (\mathcal{E}\oplus \SO) \ar[r] & \mathcal{G} \ar[r] & 0 }\end{equation}
		with $\mathcal{G}$ the universal quotient bundle and form the following ladder diagram.
		\begin{equation}\label{eq:Podd} \xymatrix{   \cdots  \ar[r] & W^i(U') \ar[r]^-{\partial} & W^{i+1}_{\mathbb{P}(\mathcal{E})}(\mathbb{P}(\mathcal{E}\oplus \SO), \SO(-1)) \ar[r] & W^{i+1}(\mathbb{P}(\mathcal{E}\oplus \SO) , \SO(-1) )\ar[r]^-{v'^*}  & W^{i+1}(U') \ar[r] & \cdots &&
				\\  %%%%%%%%%%%%%%%%
				\cdots  \ar[r] & W^i(X) \ar[u]^-{\alpha^*} \ar[r]^-{q^*} & W^i(\mathbb{P}(\mathcal{E})) \ar[u]^-{\iota'_*} \ar[r]^-{q_* \circ \mathrm{per}} & W^{i-r}(X, \det \mathcal{E}) \ar[u]^-{e(\mathcal{G}) \circ p^*} \ar[r]^-{e(\mathcal{E})}  & W^{i+1}(X) \ar[u]^-{\alpha^*} \ar[r] & \cdots }\end{equation}
		Note that all the vertical maps in (\ref{eq:Podd}) are isomorphisms and that the upper sequence is exact. It is now enough to prove that all squares  in Diagram \ref{eq:Podd} are commutative.  
		
		By \cite[Lemma 4.2 (B)]{BC}, we have the following commutative diagram
		\[\xymatrix{W^i(U') \ar[r]^-{\partial} & W^{i+1}_{\mathbb{P}(\mathcal{E})}(\mathbb{P}(\mathcal{E}\oplus \SO), \SO(-1))  &
			\\%%%%%%%%%%%%
			W^i(\mathbb{P}(\mathcal{E} \oplus \SO)) \ar[u]^-{v'^*} \ar[r]^-{\iota'^*} & W^i(\mathbb{P}(\mathcal{E}))\ar[u] &\phantom{a}\hspace{- 1.5 cm} ,}\]
		which explains the commutativity of the left square in (\ref{eq:Podd}). In order to conclude that the middle square is commutative, we use  that, by Theorem \ref{thm:projectivebundleevendimoddtwist}, the map $p_* \circ \mathrm{per}$ is the inverse of $ e(\mathcal{G})\circ p^*$, so that the commutativity follows from $  p_* \circ \iota'_* = q_* $. Finally, in order to see that the right square is commutative, we observe that $\alpha^* \iota^* = v'^*$ on the level of Picard groups. By applying the pullback $\iota^*$ to (\ref{eq:exactseqonPOE}), we see that $\iota^* \mathcal{G} \cong \mathcal{E}$ and therefore $v'^* \mathcal{G} = \alpha^* \mathcal{E}$. Now one has $$v'^* e(\mathcal{G}) p^* = e(v'^*\mathcal{G})v'^* p^* = e(v'^*\mathcal{G})\alpha^* =   e(\alpha^*\mathcal{E})\alpha^* = \alpha^* e(\mathcal{E}).\qedhere $$
	\end{proof}

\noindent \textbf{Acknowledgement}. We would like to thank Nicolas Perrin and Marcus Zibrowius for comments and helpful discussions.\ We are grateful to the anonymous referee whose careful reading and very helpful suggestions greatly improved the readability of this work.\ HX would like to acknowledge support from the EPSRC Grant EP/M001113/1, the DFG priority programme 1786 and the Fundamental Research Funds from the Central Universities, Sun Yat-sen University 34000-31610294.\ HX would also like to thank hospitality of Max-Planck-Institute in Bonn. Research for this publication was conducted in the framework of the DFG Research Training Group 2240: Algebro-Geometric Methods in Algebra, Arithmetic and Topology, while TH and HX were affiliated to the Bergische Universit\"{a}t Wuppertal.

\end{appendix}

\end{document}